\newcommand{\C}{\mathbb{C}}
\newcommand{\R}{\mathbb{R}}
\newcommand{\N}{\mathbb{N}}
\newcommand{\del}{\partial}
\newcommand{\ddc}{\text{dd}^c}
\newcommand{\supp}{\text{\normalfont supp }}
\newcommand{\Leb}{\text{\normalfont Leb}}
\newcommand{\pr}{\mathbb{P}}
\newcommand{\re}{\text{\normalfont Re }}
\newcommand{\im}{\text{\normalfont Im }}
\newtheorem{theorem}{Theorem}[section]
\newtheorem{proposition}[theorem]{Proposition}
\newtheorem{corollary}[theorem]{Corollary}
\newtheorem{lemma}[theorem]{Lemma}
\newtheorem*{theorem*}{Theorem}
\theoremstyle{definition}
\newtheorem{definition}[theorem]{Definition}
\newtheorem{remark}[theorem]{Remark}
\numberwithin{equation}{section}
\title{Commuting pairs of endomorphisms of $\pr^2$}
\author{Lucas Kaufmann}
\address{Sorbonne Universités, UPMC Univ Paris 06, IMJ-PRG, UMR 7586 CNRS, Univ Paris Diderot, Sorbonne Paris Cité, F-75005, Paris, France}
\email{lucas.kaufmann@imj-prg.fr}
\date{}
\begin{document}
\maketitle

\begin{abstract}
We consider commuting pairs of holomorphic endomorphisms of $\pr^2$ with disjoint sequence of iterates. The remaining case to be studied is when their degrees coincide after some number of iterations. We show in this case that they are either commuting Lattès maps or commuting homogeneous polynomial maps of $\C^2$ inducing a Lattès map on the hyperplane at infinity.
\end{abstract}

\section{Introduction and main result}
The study of functional equations is at the origin of the early developments of the iteration theory of polynomials and rational functions, carried out by Fatou, Julia, Ritt and others. Among these equations, the commutation relation
\begin{equation} \label{eq:commutation}
f \circ g = g \circ f
\end{equation}
is one of the simplest and most commonly encountered in mathematics and its applications. It can also be thought as a sort ``integrability condition" for the dynamical system defined by $f$ (see \cite{veselov:integrable}). 

The case when $f$ and $g$ are rational functions on the Riemann sphere already appears in the work of the above mentioned authors (see \cite{fatou}, \cite{julia}, \cite{ritt}) and it was revisited with the use of a modern language by Eremenko in \cite{eremenko:functional-equations}.

Motivated by a question of S. Smale, asking whether the centralizer of a ``typical'' diffeomorphism of a compact manifold is trivial,  Dinh and Sibony studied in  \cite{dinh-sibony:endo-permutables} pairs of endomorphisms $f,g:\pr^k \to \pr^k$ of the complex projective space $\pr^k$ of degree $d_f,d_g \geq 2$ satisfying (\ref{eq:commutation}) and the supplementary condition
\begin{equation} \label{eq:disjoint-degrees}
d_f^n \neq d_g^m, \text{ for all } n,m \geq 1.
\end{equation}

Their main result says that these maps come from affine maps of $\C^k$ after taking its quotient by a discrete group of affine transformations.  As a corollary one gets that such endomorphisms are critically finite, that is, their critical set is pre-periodic. This remark allows us to show that condition (\ref{eq:disjoint-degrees}) is necessary for their conclusion if the dimension $k$ is $3$ or higher, by exhibiting a nontrivial commuting pair of  endomorphisms of same degree on $\pr^3$  that are not critically finite, see  \cite{dinh-sibony:endo-permutables}.

The aim of this paper is to describe commuting holomorphic endomorphisms of $\pr^2$ assuming a weaker condition, namely that they do not share an iterate, that is,
\begin{equation} \label{eq:disjoint-iterates} \tag{$\star$}
f^n \neq g^m, \text{ for all } n,m \geq 1.
\end{equation}

We will establish in this setting the following

\begin{theorem} \label{thm:main-thm}
Let $f$ and $g$ be two commuting holomorphic endomorphisms of $\pr^2$ of degree $d_f, d_g \geq 2$. Assume that $f^n \neq g^m$ for all $ n,m \geq 1$ and that $d_f^k = d_g^\ell$ for some $ k,\ell \geq 1$. 
Then both $f$ and $g$ are of one of the following types:
\begin{enumerate}
\item Commuting Lattès maps of $\pr^2$.
\item Lifts of commuting Lattès maps of $\pr^1$:  $f$ and $g$ are homogeneous polynomial endomorphisms of $\C^2$ that extend holomorphically to $\pr^2$ and such that the induced maps on the hyperplane at infinity are Lattès maps of $\pr^1$.
\end{enumerate}
\end{theorem}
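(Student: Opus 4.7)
\emph{Step 1: Reduction to equal degrees.} Set $F := f^k$ and $G := g^\ell$. These maps commute, have common degree $d := d_f^k = d_g^\ell$, and the condition $(\star)$ passes to $F^a \neq G^b$ for all $a, b \geq 1$, so in particular $F \neq G$. Both classes in the conclusion are stable under iteration and, with some care, under extraction of commuting roots, so it suffices to prove the theorem for a pair of distinct commuting endomorphisms of $\pr^2$ of common degree $d$.

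\emph{Step 2: Shared dynamical invariants.} Assuming now $d_f = d_g = d$ and $f \neq g$, the current $\tfrac{1}{d}g^*T_f$ has unit mass, continuous potentials, and by commutation satisfies
\[
f^*\!\left(\tfrac{1}{d}g^*T_f\right) = \tfrac{1}{d}g^*f^*T_f = g^*T_f.
\]
Uniqueness of the Green current on $\pr^2$ (Forn\ae ss--Sibony) then forces $g^*T_f = dT_f$, and applying the same argument with roles exchanged yields $T_f = T_g =: T$. Consequently $\mu_f = T\wedge T = \mu_g$, so $f$ and $g$ share both their Green current and their measure of maximal entropy.

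\emph{Step 3: Dichotomy on invariant lines.} The next step is to split the analysis according to whether $f$ admits a totally invariant line in $\pr^2$; if so, a short argument using the commutation shows $g$ preserves it as well (up to replacement by a further iterate that permutes the finitely many invariant lines of $f$). In the absence of such a line, the aim is to upgrade the equality $\mu_f = \mu_g$, together with $(\star)$, to the absolute continuity of $\mu$; by the Berteloot--Dupont characterization of Latt\`es maps of $\pr^2$, this forces $f$ and $g$ to be Latt\`es, producing case (1). If on the other hand a line $H$ is totally invariant under both $f$ and $g$, one places $H$ at infinity so that $f, g$ become homogeneous polynomial maps of $\C^2$; their restrictions to $H \simeq \pr^1$ are then commuting endomorphisms of $\pr^1$ of equal degree with disjoint iterates, and Ritt's theorem on commuting rational maps, combined with the exclusion of the common-iterate case, identifies them as commuting Latt\`es maps of $\pr^1$, producing case (2).

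\emph{Main obstacle.} The crux lies in the first alternative of Step~3: producing the absolute continuity of $\mu$ from the mere coincidence of $T_f = T_g$ and $\mu_f = \mu_g$ under the non-triviality constraint $(\star)$. Neither invariant alone characterizes its map, so the commutation relation must be exploited more finely --- plausibly via the induced commutation of the derivative cocycles of $f$ and $g$ along $\mu$-typical orbits, in the spirit of the Dinh--Sibony analysis in \cite{dinh-sibony:endo-permutables}. A secondary technical point is the descent in Step~1, verifying that the Latt\`es and ``lift of Latt\`es'' properties do pull back from $F, G$ to the original $f, g$.
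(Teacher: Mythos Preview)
Your Steps 1 and 2 are sound and agree with the paper. The difficulties begin with Step 3, where both branches of your dichotomy have genuine gaps.

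\textbf{First alternative (no invariant line $\Rightarrow$ Latt\`es).} You correctly identify this as the crux, but the proposal offers no mechanism beyond a hope about derivative cocycles. The paper's actual engine is quite different and occupies most of the work: at a common repelling periodic point one builds a Poincar\'e map $\varphi:\C^2\to\pr^2$ conjugating $f,g$ to commuting triangular automorphisms $\Lambda_f,\Lambda_g$. The closed group generated by $\Lambda_f\Lambda_g^{-1}$ is compact (because $d_f=d_g$) and positive-dimensional (because of $(\star)$), and its orbits laminate $J_2$ and $\mu$. One then proves the leaf dimension $m$ is $3$ or $4$; the case $m=4$ gives absolute continuity of $\mu$ and hence Latt\`es via Berteloot--Dupont. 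Producing this lamination and ruling out $m\le 2$ is delicate (Lemmas 5.5--5.7) and is not recoverable from cocycle considerations alone.

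\textbf{Second alternative (invariant line $\Rightarrow$ lift of Latt\`es).} Two problems. First, placing the totally invariant line at infinity makes $f,g$ \emph{polynomial}, not \emph{homogeneous}; homogeneity is part of the conclusion and must be proved. Second, Ritt's classification of commuting rational maps on $\pr^1$ does not force the restrictions $f|_H,g|_H$ to be Latt\`es: power and Chebyshev families also arise, and you have not shown that $(\star)$ survives on $H$ in a form that excludes them. The paper does not argue via Ritt at all. Instead, in the $m=3$ case it first shows (by analysing the strictly pseudoconvex versus Levi-flat geometry of the leaves) that the exceptional set has a one-dimensional component, forcing $f,g$ to be polynomial after iteration; it then shows $J_2$ is locally a single real-analytic hypersurface and invokes Dinh's theorem, which simultaneously yields homogeneity and the Latt\`es property on $H$ from the two-dimensional structure of $J_2$, not from one-dimensional commutation.

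In short, your dichotomy ``invariant line or not'' is a hypothesis you check, whereas the paper's dichotomy ``$m=3$ or $m=4$'' is a structural fact it proves, and the $m=3$ analysis is what \emph{produces} the invariant line and the homogeneity.
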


Recall that a holomorphic endomorphism $f: \pr^k \to \pr^k$ of degree $d$ is called a {\it Lattès map} (cf. \cite{berteloot-loeb:lattes}) if there is a complex crystallographic group $\mathcal A$, an  affine map $\Lambda_f$ with linear part $\sqrt d \, U$ with $U$ unitary and a ramified covering $\varphi: \C^k \to \pr^k$ such that the diagram
\vspace*{-20pt}
\begin{center}
 \begin{tikzpicture}[node distance=2cm, thick, auto]
  \node (C1) {$\C^k$};
  \node (C2) [right of=C1] {$\C^k$}; 
  \node (U1) [below of=C1] {$\pr^k$};
  \node (U2) [below of=C2] {$\pr^k$};
  \draw[->] (C1) to node {$\Lambda_f$} (C2);
  \draw[->] (U1) to node {$f$} (U2);
  \draw[->] (C1) to node [swap] {$\varphi$} (U1);
  \draw[->] (C2) to node {$\varphi$} (U2);
\end{tikzpicture}
\end{center}
commutes and such that $\mathcal A$ acts transitively on the fibers of $\varphi$.

Notice that, in particular, we have $\C^k / \mathcal A \simeq \pr^k$. In the two dimensional setting, the groups $\mathcal A$ such that  $\C^2 / \mathcal A \simeq \pr^2$  were  classified in \cite{tokunaga-yoshida}. Using this classification, it is possible to give a precise description of Lattès maps of $\pr^2$ (see \cite{dupont:lattes} and \cite{rong:lattes}). For a complete study of Lattès maps of $\pr^1$ the reader may consult \cite{milnor:lattes}.

The strategy of the proof, which is inspired by the methods in \cite{dinh-sibony:endo-permutables} and \cite{eremenko:functional-equations}, relies on the study of the lamination of the Julia set $J_2$ and the Green measure $\mu$ of the pair $f,g$. We may suppose at first that $d_f = d_g$. We start by considering the Poincaré map $\varphi: \C^2 \to \pr^2$ of $f$ and $g$ associated with a common repelling periodic point. This map gives a semiconjugation between $f$, $g$ and triangular automorphisms $\Lambda_f, \Lambda_g$ of $\C^2$. We consider then the closed abelian Lie group of triangular automorphisms of $\C^2$ generated by $\Lambda_f \circ \Lambda_g^{-1}$, which preserves several dynamical objects. Condition  (\ref{eq:disjoint-iterates}) insures that this group is positive dimensional whereas the condition that $d_f= d_g$ implies that it is compact. This procedure can be applied to different periodic points, giving to $J_2$ and $\mu$ a laminar structure induced by the orbits of these groups.

The next step consists of a detailed analysis of the geometry of the lamination defined above. We begin by showing that the dimension of its leaves is $m=3$ or $4$, see Proposition \ref{prop:lamination}. When $m=4$ the measure $\mu$ is smooth in some open set and we conclude, using a theorem of Berteloot and Dupont, that both $f$ and $g$ are Lattès maps of $\pr^2$. The case $m=3$ is more delicate and Section \ref{sec:m=3} is devoted to it. We prove in this setting that $f$ and $g$ possess a totally invariant line. This is done by analyzing separately the cases when the leaves are strictly pseudoconvex and when they are Levi-flat. Once this step is achieved, we reduce the problem to the case where $f$ and $g$ are polynomial maps of $\C^2$, see Theorem \ref{thm:3-laminated-polynomial}. We then apply a theorem of Dinh saying that a polynomial map of $\C^k$ that extends holomorphically to $\pr^k$ and whose maximal order Julia set contains a piece of hypersurface is a lift of a Lattès maps of $\pr^{k-1}$.

\textbf{Acknowledgements.} This work was supported by a grant from Région Île-de-France.

\section{Holomorphic endomorphisms of $\pr^k$: general facts}
We recall here the basic dynamical objects associated with a holomorphic endomorphism of $\pr^k$. For further details the reader may consult \cite{sibony:dynamique-Pk} and \cite{dinh-sibony:cime}.

We start by considering a polynomial endomorphism $f:\C^k \to \C^k$ of algebraic degree $d \geq 2$ which can be extended holomorphically to $\pr^k$. We define its {\it Green function} by $$G(z) = \lim _{n \to \infty} \frac{1}{d^n} \log^+ \|f^n(z)\|.$$
It is a Hölder continuous plurisubharmonic function on $\C^k$ vanishing exactly at the points of bounded orbit. Moreover we can show that  $K = \{G=0\}$ is a compact subset of $\C^k$. The {\it Green current of} $f$ is the positive closed (1,1)-current  defined by $T = dd^c G$.

For a holomorphic endomorphism $f:\pr^k \to \pr^k$ of degree $d \geq 2$ its {\it Green current} is defined as the limit $$ T = \lim _{n \to \infty} \frac{1}{d^n} (f^n)^*\omega,$$
where $\omega$ is the Fubini-Study form. We get the same limiting current if we replace $\omega$ in the above formula by any positive closed $(1,1)$-current of mass $1$ and of bounded local potential. As in the polynomial case we can show that the local potentials of the Green current are Hölder continuous.

In both polynomial and holomorphic cases we get higher order Green currents by taking wedge products of $T$ with itself: $T^\ell = T \wedge \cdots \wedge T$ ($\ell$ times) for $1\leq \ell \leq k$. They are well-defined since $T$ admits continuous local potentials. When $\ell = k$ we call $\mu = T^k$, the {\it Green measure} of $f$. It is the unique invariant probability measure of maximal entropy for the dynamical system defined by $f$ \cite{briend-duval:mesure}.

The {\it Julia set of order $s$} of $f$, denoted by $J_s$, is by definition the support of the current $T^s$, $1\leq \ell \leq k$. In this work we will focus mainly on the maximal order Julia set $J_k$.

The following result will be useful to us. Its proof is inspired by \cite{dinh-sibony:equidistribution-ENS}.

\begin{proposition} \label{prop:currentJk}
Let $S$ be a positive closed $(1,1)$-current on $\pr^k$. Assume that there are $n_i \nearrow +\infty$ and positive closed $(1,1)$-currents $S_{n_i}$ of unit mass supported by $J_k$ such that $S = \lim_{i \to \infty} d^{-n_i} (f^{n_i})^* S_{n_i}$. Then $S = T$. In particular, if $S$ is a positive closed $(1,1)$-current of unit mass supported by $J_k$ and $f^*S = d \cdot S$, then $S = T$.
\end{proposition}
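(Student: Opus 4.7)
The plan is to pass to $\omega$-psh potentials and reduce the statement to showing that a certain limit difference of potentials vanishes identically, using the invariance of the Green measure together with a maximum-principle argument on level sets, valid because the limit current must have the same potential as $T$ outside $J_k$.

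Since $H^{1,1}(\pr^k,\R)=\R\cdot[\omega]$, I write $S=\omega+dd^c U$, $T=\omega+dd^c G$ and $S_{n_i}=\omega+dd^c v_{n_i}$ with $\omega$-psh potentials, normalizing $\sup v_{n_i}=0$. Expanding,
\begin{equation*}
d^{-n_i}(f^{n_i})^*S_{n_i}\;=\;\bigl(\omega+dd^c G_{n_i}\bigr)\;+\;dd^c\tilde u_{n_i},\qquad \tilde u_{n_i}:=d^{-n_i}v_{n_i}\circ f^{n_i},
\end{equation*}
with $G_{n_i}\to G$ uniformly by the classical telescoping series for the Green potential, and $\tilde u_{n_i}\leq 0$ since $f^{n_i}$ is surjective and $\sup v_{n_i}=0$. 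By Hartogs' compactness, after extracting a subsequence $\tilde u_{n_i}\to \tilde w$ in $L^1(\omega^k)$ and $\mu$-almost everywhere, with $\tilde w\leq 0$ and $dd^c\tilde w=S-T$. The first key ingredient is the invariance $f_*\mu=\mu$ of the Green measure, which gives
\begin{equation*}
\int \tilde u_{n_i}\,d\mu\;=\;d^{-n_i}\int v_{n_i}\,d\mu.
\end{equation*}
Because $T$ has H\"older continuous local potentials, $\mu$ does not charge pluripolar sets and the family $\{v\in\PSH(\pr^k,\omega):\sup v=0\}$ is uniformly integrable with respect to $\mu$. Hence $\int \tilde u_{n_i}\,d\mu\to 0$, and the reverse Fatou lemma for the non-positive sequence $\tilde u_{n_i}$ yields $\int \tilde w\,d\mu\geq 0$; combined with $\tilde w\leq 0$, this forces $\tilde w=0$ $\mu$-almost everywhere and, by upper semicontinuity, on $J_k=\supp\mu$.

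To extend $\tilde w\equiv 0$ to all of $\pr^k$, note that the support condition passes to the weak limit, so $\supp S\subset J_k$; on $\pr^k\setminus J_k$ the identity $\omega+dd^c U=0$ makes $U$ smooth and $\tilde w$ continuous. Suppose for contradiction that $\{\tilde w<0\}\neq\emptyset$; this open set lies in $\pr^k\setminus J_k$, and for any $\delta>0$ the interior $\Omega'_\delta$ of the closed set $\{\tilde w\leq-\delta\}$ is a relatively compact open subset of $\pr^k\setminus J_k$ on which $\tilde w$ is continuous. By upper semicontinuity, the topological frontier of $\Omega'_\delta$ is contained in $\{\tilde w=-\delta\}$ and in particular in $\pr^k\setminus J_k$; moreover, on $\Omega'_\delta$ the identity $dd^c(-\tilde w)=T\geq 0$ (valid since $S=0$ off $J_k$) together with continuity makes $-\tilde w$ plurisubharmonic, equal to $\delta$ on the boundary and $\geq \delta$ inside. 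The maximum principle then forces $-\tilde w=\delta$ throughout $\Omega'_\delta$, so the open set $\{\tilde w<-\delta\}\subset \Omega'_\delta$ must be empty; as $\delta>0$ is arbitrary, $\tilde w\geq 0$, and combined with $\tilde w\leq 0$ this yields $\tilde w\equiv 0$, i.e., $S=T$. The second assertion then follows by taking $S_{n_i}=S$, for then $d^{-n_i}(f^{n_i})^*S_{n_i}=S$ for every $i$. The main obstacle is this last step, where the a priori only upper-semicontinuous nature of $\tilde w$ obstructs a direct maximum principle; it is circumvented by working on interiors of closed level sets $\{\tilde w\leq -\delta\}$, which lie in the smooth locus of $U$.
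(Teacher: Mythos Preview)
Your argument coincides with the paper's up to the point where you establish $\tilde w\le 0$ everywhere and $\tilde w=0$ on $J_k$; the divergence is in the final step, where the paper invokes an equidistribution theorem while you attempt a direct maximum principle.

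The gap is in the assertion that $\Omega'_\delta=\mathrm{int}\{\tilde w\le -\delta\}$ is relatively compact in $\pr^k\setminus J_k$. You know that $\tilde w$ is continuous on $\pr^k\setminus J_k$ and equals $0$ on $J_k$, but upper semicontinuity of $\tilde w$ does \emph{not} prevent the sublevel set $\{\tilde w\le -\delta\}$ from accumulating on $J_k$: if $p\in J_k$ and $z_n\to p$ with $\tilde w(z_n)\le -\delta$, the usc inequality $\limsup \tilde w(z_n)\le \tilde w(p)=0$ is perfectly compatible with this. Concretely, $\tilde w=U-G$ with $G$ continuous and $U$ $\omega$-psh; the fact that $U(p)=G(p)>-\infty$ at every $p\in J_k$ does not rule out $U(z_n)\to -\infty$ along sequences $z_n\to p$ (think of potentials like $\sum 2^{-n}\log|z-a_n|$ with $a_n\to p$). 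When $\overline{\Omega'_\delta}$ meets $J_k$, the boundary of $\Omega'_\delta$ contains points where $\tilde w=0$ rather than $-\delta$, and near those points $-\tilde w$ may be unbounded from above, so the maximum principle for $-\tilde w$ on $\Omega'_\delta$ gives no information.

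This is exactly the obstruction the paper's proof is designed to get around. From the same information ($\tilde w=0$ on $J_k$, $\tilde w$ locally bounded off $J_k$) the paper deduces only that $\tilde w$ is \emph{finite} at every point, hence $S$ has zero Lelong numbers; it then appeals to the equidistribution theorems of Guedj/Dinh--Sibony/Taflin to conclude $d^{-n_i}(f^{n_i})^*S_{n_i}\to T$. Your more elementary route would be attractive, but to make it work you would need an additional ingredient ensuring that $\{\tilde w\le -\delta\}$ stays away from $J_k$ (equivalently, some quantitative continuity of $\tilde w$ up to $J_k$), and nothing in your argument supplies this.
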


\begin{proof}
Recall that a function $u: \pr^k \to \R \cup \{-\infty\}$ is called p.s.h.\ modulo $T$ if $u$ is locally a difference of a p.s.h.\ function and a potential of $T$. Such a function is upper semi-continuous, it belongs to $L^1(\pr^k)$ and $T + \ddc u$ is a positive current. Modulo $T$ p.s.h.\ functions satisfy the inequalities
\begin{equation*}
\max_{\pr^k} u \leq c(1 + \|u\|_{L^1(\pr^k)}) \text{ and } \int u\, d \mu \leq c(1 + \|u\|_{L^1(\pr^k)}) 
\end{equation*}
where $c$ is a dimensional constant (see \cite{dinh-sibony:equidistribution-ENS}) and $\mu$ is the Green measure.


By the $\ddc$-Lemma there are functions $u_i$ satisfying $T = S_{n_i} - \ddc u_i$ and $\max_{\pr^k} u_i = 0$. The family $\{u_i\}$ is bounded in $L^1(\pr^k)$ and  $d^{-n_i} u_i \circ f^{n_i}$ converges  in $L^1(\pr^k)$ to a modulo $T$ p.s.h. function $u$ satisfying $T = S - \ddc u$. 

Since $u_i \leq 0$ we have $u \leq 0$. From the second inequality above, the sequence $u_i$ is bounded in $L^1(\mu)$. By the invariance of $\mu$ by $f$ we get $\|d^{-n_i} u_i \circ f^{n_i}\|_{L^1(\mu)} = d^{-n_i}  \|u_i \|_{L^1(\mu)} \to 0$, so by Fatou's Lemma $\int u \; d \mu \geq 0$ . Together with the fact that $u \leq 0$ we get $\int u \; d \mu = 0$ and by upper semicontinuity  $u = 0$ on $\supp \mu = J_k$. In particular $u$ is finite over $J_k$.

On the other hand, since $\supp S \subset J_k$ we have $T|_{\pr^k \setminus J_k} = (- \ddc u)|_{\pr^k \setminus J_k}$ showing that $u$ is locally bounded outside $J_k$. We conclude that $u$ is finite on $\pr^k$. In particular the current $S$ has zero Lelong number everywhere on $\pr^k$. 

From the upper semicontinuity of the Lelong number we get $$\lim _{i \to \infty} \sup_{p \in \pr^k} \, \nu(d^{-n_i}(f^{n_i})^* S_{n_i},p) \leq \sup_{p \in \pr^k} \nu(S,p) = 0.$$ An equidistribuition theorem (see \cite{guedj:equidistribution}), \cite{dinh-sibony:equidistribution-ENS}, \cite{taflin:equidistribution}) gives $ \lim_{i \to \infty} d^{-n_i} (f^{n_i})^* S_{n_i} = T$, so $S=T$.
\end{proof}

\begin{remark}
The existence of a totally invariant $(1,1)$-current supported by $J_k$ as in the above proposition is quite special. Indeed, in this case we have that $J_1 = \supp T \subset J_k$, so $J_k = J_1$ and the Julia sets of all orders coincide. Some examples of this phenomenon appear in \cite{fornaess-sibony:examples}.
\end{remark}

The next lemma is well known and it is useful in producing totally invariant currents.

\begin{lemma} \label{lemma:cesaro}
Let $R$ be a positive closed $(1,1)$-current of mass one on $\pr^k$. Consider the sequence of Cesaro sums $S_n = \frac{1}{n} \sum_{j=0}^{n-1} d^{-j} (f^j)^* R$. Then $(S_n)_{n \geq 1}$ is relatively compact and every cluster value $S$ is totally invariant, that is, $f^*S = d \cdot S$.
\end{lemma}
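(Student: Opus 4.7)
The plan is the standard Cesaro averaging argument. First I would prove relative compactness by a direct mass computation: since $f^*\omega$ represents $d \cdot [\omega]$ in $H^{1,1}(\pr^k)$, an induction on $j$ shows that $(f^j)^*R$ is a positive closed $(1,1)$-current of mass $d^j$, so each summand $d^{-j}(f^j)^*R$ has unit mass and hence so does the convex average $S_n$. Because positive closed $(1,1)$-currents of bounded mass on $\pr^k$ form a weak-$*$ relatively compact set, the sequence $(S_n)_{n \geq 1}$ is relatively compact.

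Next, for an arbitrary cluster value $S$ obtained along a subsequence $S_{n_i} \to S$, I would establish total invariance via the telescoping identity
\begin{equation*}
f^*S_n \;=\; \frac{1}{n}\sum_{j=0}^{n-1} d^{-j}(f^{j+1})^*R \;=\; d \cdot S_n \;+\; \frac{d}{n}\bigl(d^{-n}(f^n)^*R - R\bigr),
\end{equation*}
obtained by re-indexing the sum. Both $d^{-n}(f^n)^*R$ and $R$ have mass $1$, so the error term tends to $0$ in mass, hence weakly. Passing to the limit along $n_i$ and using that $f^*$ is continuous on positive closed $(1,1)$-currents on $\pr^k$, we conclude $f^*S = d \cdot S$.

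No genuine obstacle arises: the only nontrivial ingredient is the continuity of the pullback operator on positive closed $(1,1)$-currents, which is classical for holomorphic endomorphisms of projective space and follows from the continuity of pulling back local plurisubharmonic potentials by $f$.
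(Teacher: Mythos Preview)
Your proof is correct and follows essentially the same Cesaro-telescoping approach as the paper. The only organizational difference is that you compare $f^*S_n$ directly with $d\cdot S_n$ via a single telescoping identity, whereas the paper compares $d^{-1}f^*S_{n_\ell}$ with $\frac{n_\ell+1}{n_\ell}S_{n_\ell+1}$ and then checks separately that $S_{n_\ell+1}\to S$; your formulation is slightly more streamlined, and both arguments rely (yours explicitly, the paper's implicitly) on the continuity of $f^*$ on positive closed $(1,1)$-currents.
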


\begin{proof}
The first statement follows directly from the fact that  each $S_n$ is of mass one. Let $S$ be a cluster value and take $n_\ell \nearrow + \infty$ such that $S_{n_\ell} \to S$ as $\ell \to \infty$. Notice that $n S_n = \sum_{j=0}^{n-1} d^{-j} (f^j)^* R $ for every $n$, so that $$d^{-1} f^*S_{n_\ell} = \frac{1}{n_\ell} \sum_{j=0}^{n_\ell-1} d^{-{j+1}} (f^{j+1})^* R =  \frac{1}{n_\ell} \sum_{j=1}^{n_\ell} d^{-j} (f^j)^* R = \frac{n_\ell + 1}{n_\ell} S_{n_\ell + 1} - \frac{1}{n_\ell} R.$$

Since the term on the left-hand side converges to $d^{-1}f^* S$ and the last term on the right-hand side converges to $0$ we need to show that $\frac{n_\ell + 1}{n_\ell} S_{n_\ell + 1} \to S$ as $\ell \to \infty$. Writing $$\frac{n_\ell + 1}{n_\ell} S_{n_\ell + 1} = \frac{1}{n_\ell}[(n_\ell+1)S_{n_\ell +1} - n_\ell S_{n_\ell}] + S_{n_\ell}$$ this amounts to showing that $\frac{1}{n_\ell}[(n_\ell+1)S_{n_\ell +1} - n_\ell S_{n_\ell}]  \to 0$.

We have that $$\frac{1}{n_\ell}[(n_\ell+1)S_{n_\ell +1} - n_\ell S_{n_\ell}]  = \frac{1}{n_\ell}[   \sum_{j=0}^{n_\ell} d^{-j} (f^j)^* R -   \sum_{j=0}^{n_\ell - 1} d^{-j} (f^j)^* R] = \frac{1}{n_\ell} d^{-n_\ell} (f^{n_\ell})^*R \to 0,$$ since the currents $d^{-n_\ell} (f^{n_\ell})^*R$ have all mass one. This concludes the proof.
\end{proof}

The following theorem shows that the Green measure describes the distribution of pre-images of points outside an exceptional set, giving also a precise description of this set, see \cite{dinh-sibony:cime}.

\begin{theorem} [Equidistribution of pre-images] \label{thm:equidistribution-preimages}
Let $f:\pr^k \to \pr^k$ be a holomorphic endomorphism of degree $d\geq 2$ and $\mu$ its Green measure.  Then there is a proper algebraic subset $\mathcal E$ of $\pr^k$ such that the measures $$\mu_n^a  = \frac{1}{d^{nk}} (f^n)^* \delta_a = \frac{1}{d^{nk}} \sum_{x \in f^{-n}(a)} \delta_x $$ converge to $\mu$ as $n \to \infty$ if and only if $a \notin \mathcal E$.

Moreover, the set  $\mathcal E$ is totally invariant $f^{-1}(\mathcal E) = f(\mathcal E) = \mathcal E$ and is maximal in the sense that if $E$ is a proper algebraic subset of $\pr^n$ such that $f^{-n}(E) \subset E$ for some $n \geq 1$, then $E \subset \mathcal E$.
\end{theorem}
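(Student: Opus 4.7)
The plan is to first construct $\mathcal{E}$ as a maximal algebraic object and then establish equidistribution off $\mathcal{E}$ via pluripotential methods, handling the converse via a support argument. I would define $\mathcal{E}$ to be the union of all proper algebraic subsets $E \subset \pr^k$ with $f^{-n}(E) \subset E$ for some $n \geq 1$. The first claim is that $\mathcal{E}$ is itself a proper algebraic subset. The key is a uniform bound on the degrees of admissible $E$: since $f^{-n}(E)$ has degree $d^n \deg(E)$, the inclusion $f^{-n}(E) \subset E$ forces $f^n(E) \supset E$, and pushing forward shows components of $E$ are tied to the postcritical values of $f^n$, giving a degree bound depending only on $f$. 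Noetherianity then makes the union finite, hence algebraic. Total invariance $f^{-1}(\mathcal{E}) = f(\mathcal{E}) = \mathcal{E}$ follows from the definition, since applying $f^{\pm 1}$ to any admissible $E$ produces another admissible set, and maximality is built into the definition.

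For $a \notin \mathcal{E}$, the goal is $\mu_n^a \to \mu$, and the strategy is to reduce to Proposition \ref{prop:currentJk}. I would fix a smooth probability measure $\Omega$ on $\pr^k$; the smooth case $d^{-nk}(f^n)^*\Omega \to \mu$ is classical, following from $d^{-n}(f^n)^*\omega \to T$ and the continuity of Bedford--Taylor products against currents with continuous potentials. Writing $\delta_a - \Omega$ as a $\ddc$-exact $(k,k)$-current via the $\ddc$-Lemma applied iteratively in bidegree, I would produce potentials $u_n^{(j)}$ such that $\mu_n^a - d^{-nk}(f^n)^*\Omega = \sum_j \ddc u_n^{(j)} \wedge R_n^{(j)}$, where the $R_n^{(j)}$ are controlled intersections of pullbacks of $T$. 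Using the modulo-$T$ PSH estimates recalled in the proof of Proposition \ref{prop:currentJk}, the rescaled potentials $d^{-n} u_n^{(j)}$ are bounded in $L^1$ and admit subsequential limits that are modulo-$T$ PSH.

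The main obstacle is identifying precisely those $a$ for which these limiting potentials vanish on $J_k$, because only then does the cohomologically trivial correction tend to zero. The non-vanishing locus is governed by the positive Lelong number set of the limit, which is analytic by Siu's theorem, and a direct computation using $f^*$-equivariance shows it is totally invariant under some iterate of $f$; the degree bound from the first step then forces it to sit inside $\mathcal{E}$. Conversely, if $a \in \mathcal{E}$, I would pick an admissible $E$ with $a \in E$ and $f^{-N}(E) \subset E$; then $\supp \mu_{nN}^a \subset E$ for every $n \geq 1$, and since $\mu = T^k$ has H\"older continuous local potentials and therefore assigns no mass to the pluripolar set $E$, the measures $\mu_{nN}^a$ cannot weakly converge to $\mu$. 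The maximality statement in the theorem then follows tautologically from the definition of $\mathcal{E}$.
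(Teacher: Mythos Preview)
The paper does not prove this theorem: it is quoted from the literature with the attribution ``see \cite{dinh-sibony:cime}'' and no argument is given. So there is nothing in the paper to compare your proposal against.

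That said, a few remarks on the sketch itself. The converse direction (if $a\in\mathcal E$ then $\mu_n^a\not\to\mu$) is fine and is the easy part. The construction of $\mathcal E$ is where your argument is thin. The claim that $f^{-n}(E)\subset E$ forces a uniform degree bound on $E$ via ``components of $E$ are tied to the postcritical values of $f^n$'' is correct for hypersurfaces (where $f^*[E]=d[E]$ forces ramification along $E$), but for components of higher codimension there is no such direct link to the critical locus, and in the actual literature the finiteness of $\mathcal E$ requires substantially more work. Noetherianity alone does not give what you want without a uniform bound valid across all codimensions.

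For the equidistribution direction, the reference to Proposition~\ref{prop:currentJk} is misplaced: that proposition concerns $(1,1)$-currents supported on $J_k$ and converging to $T$, whereas here you need convergence of $(k,k)$-currents (measures) to $\mu=T^k$, and the Dirac masses $\delta_a$ are certainly not supported on $J_k$ in general. The iterated $\ddc$-decomposition you outline is in the right spirit, but the step ``the non-vanishing locus is governed by the positive Lelong number set of the limit'' needs to be made precise for potentials in each bidegree, and showing that this locus is a \emph{proper} analytic set (so that Siu plus total invariance places it in $\mathcal E$) is exactly the heart of the matter. As written, the argument is circular: you need the algebraicity and maximality of $\mathcal E$ to absorb the bad set, but you have not yet established that $\mathcal E$ is algebraic.
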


It is known (see  \cite{fornaess-sibony:dynamics1}, \cite{cerveau-lins-neto:hyp-exc}) that the codimension one components of $\mathcal E$ form a hypersurface of degree at most $k+1$ and it is conjectured that they are all linear. In dimension $2$ the situation is completely understood.

\begin{theorem} [Fornæss-Sibony \cite{fornaess-sibony:dynamics1}, Cerveau-Lins Neto \cite{cerveau-lins-neto:hyp-exc}] \label{thm:exceptional-set}
Let $f:\pr^2 \to \pr^2$ be a holomorphic endomorphism of degree $d\geq 2$ and let $\mathcal E$ be its exceptional set. Then the one dimensional part of $\mathcal E$ is a union of at most three lines. In particular, if no iterate of $f$ is conjugated to a  polynomial map then $\mathcal E$ is finite.
\end{theorem}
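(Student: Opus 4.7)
The plan is to split the statement into two parts: showing every irreducible component of the one-dimensional part of $\mathcal E$ is a line, and then bounding their number by three. The ``in particular'' clause then follows quickly.

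First I would reduce to the case of individually totally invariant components. Since $f^{-1}(\mathcal E) = \mathcal E$, the map $f$ permutes the finitely many irreducible components of the one-dimensional part. Replacing $f$ by an iterate, which does not change the exceptional set, I may assume each component $C$ satisfies $f^{-1}(C) = C$.

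For the linearity step, fix $C = \{P = 0\}$ with $P$ irreducible and homogeneous of degree $e \geq 1$. A degree count on $f^*C$ forces $f^*C = d \cdot C$ as divisors, so a homogeneous lift $F\colon \C^3 \to \C^3$ of $f$ satisfies $P \circ F = \lambda P^d$ for some $\lambda \neq 0$. The induced holomorphic self-map of the normalization $\tilde C$ has degree $d$, and Riemann--Hurwitz gives $(d-1)(2g(\tilde C) - 2) + R = 0$ with $R \geq 0$, forcing $g(\tilde C) \leq 1$. The elliptic case ($g=1$, $R=0$) corresponds to a smooth plane cubic with an unramified $d$-to-one self-covering, and is excluded by comparing the pullback action of $f$ on the holomorphic $1$-form of $\tilde C$ with the multiplicative identity for $P$. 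In the rational case, I would study the meromorphic logarithmic form $\omega = dP/P$ on $\pr^2$, which has simple pole along $C$: the identity $P\circ F = \lambda P^d$ yields the eigenvalue relation $F^*\omega = d\cdot \omega$, and analyzing this on the logarithmic cotangent bundle $\Omega^1_{\pr^2}(\log C)$ via Chern class and residue computations forces $e = 1$, so $C$ is a line.

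For the upper bound, suppose $\mathcal E$ contains four lines. After passing to an iterate making each line totally invariant, the complement $U = \pr^2 \setminus \mathcal E$ is hyperbolic in a suitable sense by theorems of Green and Fujimoto on complements of hyperplane arrangements in $\pr^2$. The iterates $\{f^n|_U\}$ then form a normal family by the taut property, so $U$ is contained in the Fatou set $\pr^2 \setminus J_1$. But $\mu$ does not charge the pluripolar set $\mathcal E$, so $\mu(U) = 1$, giving that $J_2 = \supp \mu \subset J_1$ meets $U$, a contradiction. The ``in particular'' clause is then immediate: a line $L \subset \mathcal E$ yields an $f$-invariant affine chart $\pr^2 \setminus L \simeq \C^2$ on which $f$ becomes a polynomial endomorphism, so the absence of such a polynomial conjugate for every iterate forces $\mathcal E$ to have no one-dimensional component, hence to be finite. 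The principal obstacle is the linearity step: Riemann--Hurwitz bounds only the genus, not the degree, of $C$, so extracting $e = 1$ really requires exploiting the multiplicative identity $P \circ F = \lambda P^d$ through the eigen-equation on logarithmic forms, together with delicate residue analysis along the singular locus of $C$ in the rational case.
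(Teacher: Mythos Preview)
The paper does not prove this theorem; it is stated as a known result attributed to Forn\ae ss--Sibony and Cerveau--Lins Neto, so there is no in-paper proof to compare your proposal against.

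On the merits of your proposal itself: the linearity step via Riemann--Hurwitz followed by the eigen-equation on logarithmic forms is essentially the Cerveau--Lins Neto strategy, and your outline is reasonable, though the ``delicate residue analysis'' you defer to is where the real work lies. Your bound-by-three step, however, has a genuine gap. The hyperbolicity results of Green and Fujimoto for complements of line arrangements in $\pr^2$ require the lines to be in \emph{general position} (no three concurrent). Nothing you have said rules out four totally invariant lines meeting at a common point $p$; in that configuration any other line through $p$, with $p$ removed, is a copy of $\C$ sitting inside the complement, so the complement is not Brody hyperbolic and your normal-families argument collapses.

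The standard route to the bound is simpler and does not need linearity or hyperbolicity. If $E_1$ denotes the one-dimensional part of $\mathcal E$, total invariance $f^{-1}(E_1)=E_1$ forces $f^*[E_1]=d\cdot[E_1]$ as divisors, so $E_1$ appears in the ramification divisor with multiplicity at least $d-1$. Since the critical set of a degree-$d$ holomorphic endomorphism of $\pr^2$ has degree $3(d-1)$, this yields $(d-1)\deg(E_1)\le 3(d-1)$, i.e.\ $\deg(E_1)\le 3$. Combined with linearity of each component, this gives at most three lines with no position hypothesis needed.
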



We state a last general result for further use.

\begin{proposition} \cite{dinh:lattes} \label{prop:complex-curves-J}
Let $f$ be a polynomial endomorphism of $\C^k$ of degree $d \geq 2$ that extends to a holomorphic endomorphism of $\pr^k$. Then there is no complex manifold contained in the maximal order Julia set $J_k$ which passes through a repelling periodic point.
\end{proposition}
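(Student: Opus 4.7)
The strategy is a proof by contradiction: I would assume $M \subseteq J_k$ is a complex submanifold of dimension $m \geq 1$ passing through a repelling periodic point $p$ of $f$. After replacing $f$ by a suitable iterate, we may assume $f(p) = p$ and $A := Df(p)$ has all eigenvalues of modulus strictly greater than one. The top-dimensional case $m = k$ is easily handled: such an $M$ would force $J_k$ to have non-empty interior, but in the polynomial setting $G$ is pluriharmonic outside the filled Julia set $K = \{G = 0\}$, so $J_1 = \supp T = \supp \ddc G \subseteq \partial K$ has empty interior, and $J_k \subseteq J_1$ yields the contradiction.

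For $1 \leq m \leq k-1$, the plan is to transport the problem to $\C^k$ via the Poincar\'e inverse map at $p$: a holomorphic map $\varphi : \C^k \to \pr^k$ (Berteloot--Dupont, Briend--Duval) with $\varphi(0) = p$, $f \circ \varphi = \varphi \circ A$, and $\varphi$ biholomorphic near $0$. By total invariance of $J_k$, the closed set $W := \varphi^{-1}(J_k)$ is totally $A$-invariant, and $V := \varphi^{-1}(M)$ is a local complex submanifold of dimension $m$ through $0$ inside $W$. Since $A$ is expanding, each $A^n(V) \subseteq W$ is still an $m$-dimensional complex submanifold through $0$, with diameter tending to infinity. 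Using Bishop's compactness theorem applied to the analytic pieces $A^n(V) \cap B(0, R)$ together with a diagonal extraction, one would extract a limiting $m$-dimensional closed analytic subset $Z \subseteq \C^k$ through $0$, still contained in $W$.

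I would then push $Z$ forward by $\varphi$ to produce a positive closed current $S$ on $\pr^k$ of bidegree $(k - m, k - m)$ supported in $J_k$. The $A$-invariance of $Z$, combined with $f \circ \varphi = \varphi \circ A$, should translate after the correct mass normalization into a total invariance of the form $f^* S = d^{\,k-m} S$. When $m = k - 1$, $S$ is a $(1,1)$-current of unit mass supported by $J_k$, so Proposition \ref{prop:currentJk} forces $S = T$; hence $J_1 = \supp T \subseteq J_k$ and so $J_1 = J_k$, and a final step exploiting the smoothness of the analytic piece $V$ inside $W$ delivers the desired contradiction. For smaller $m$ the argument is either iterated or combined with wedge products with $T$ to reach an analogous identity among Julia sets. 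The main obstacle is precisely this extraction and push-forward step: the map $\varphi$ is generally far from proper, so controlling the mass of $\varphi_*[Z]$ and justifying the invariance $f^*S = d^{\,k-m} S$ requires the H\"older potentials of the currents $T^j$ and careful mass estimates on the Poincar\'e map, which is where the technique of \cite{dinh:lattes} is essential.
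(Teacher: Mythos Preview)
The paper does not prove this proposition; it is quoted from \cite{dinh:lattes} without proof. Your proposal has a genuine gap, and the argument actually needed (and echoed elsewhere in this paper) is simpler than the current-theoretic route you sketch.

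The gap is at the end: arriving at $J_1 = J_k$ is \emph{not} a contradiction. For Latt\`es maps one has $J_1 = \cdots = J_k = \pr^k$, and this very paper proves exactly your conclusion for arbitrary holomorphic endomorphisms of $\pr^2$ as Proposition~\ref{prop:J1=J2}, precisely because that conclusion is not absurd in general. Your sentence ``a final step exploiting the smoothness of the analytic piece $V$ inside $W$ delivers the desired contradiction'' does not name that step, and smoothness of $V$ together with $J_1 = J_k$ does not by itself yield one.

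What your argument never uses is the one feature specific to the polynomial hypothesis: the Green function $G$ on $\C^k$ is continuous, non-negative, p.s.h., and $K = \{G = 0\}$ is \emph{compact}, with $J_k \subset K$. If $M \subset J_k$ is a positive-dimensional complex submanifold through a repelling fixed point, then $G|_M \equiv 0$. Pulling back by the Poincar\'e map $\varphi$ (with a \emph{triangular} lift $\Lambda$ --- resonances may obstruct the linear conjugacy $A = Df(p)$ you assume), one gets $G^* = G \circ \varphi \geq 0$ vanishing on the germ $\varphi^{-1}(M)$ through $0$. Since $\Lambda$ is dilating and $G^* \circ \Lambda = d\, G^*$, iterating (or applying Lemma~\ref{lemma:zalcman}) produces a non-constant entire curve $\phi : \C \to \C^k$ on which $G \equiv 0$. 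Then $\phi(\C)$ is an unbounded subset of the compact set $K$, the desired contradiction. This is precisely the mechanism the paper invokes later in the proof of Proposition~\ref{prop:pol-one-leaf}.

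A minor correction in your $m = k$ case: the claim that $G$ is pluriharmonic outside $K$, hence $J_1 \subset \partial K$, is false already for $f(z_1,z_2) = (z_1^2, z_2^2)$, where $G = \max(\log^+|z_1|, \log^+|z_2|)$ and $J_1$ strictly contains $\partial K$. The correct and sufficient fact is $J_k \subset \partial K$.
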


\begin{remark}
For non-polynomial endomorphisms, the above proposition is no longer true, as the Lattès maps show. Other non-Lattès examples can be found in \cite{fornaess-sibony:examples}. Proposition \ref{prop:J1=J2} gives a related result for holomorphic endomorphisms of $\pr^2$.
\end{remark}

\subsection*{Commuting pairs}
For a commuting pair of endomorphisms, the several dynamical objects introduced above coincide. This is made precise by the two propositions below.

Given a commuting pair $f,g: \pr^k \to \pr^k$ it is easy to see that we can always find commuting polynomial lifts $F,G: \C^{k+1} \to \C^{k+1}$, that is, $F$ and $G$ are commuting homogeneous polynomial mappings, $\pi \circ F = \pi \circ f$ and $\pi \circ G = \pi \circ g$, where $\pi : \C^{k+1} \setminus \{0\} \to \pr^k$ is the canonical projection.

\begin{proposition} \cite{dinh-sibony:endo-permutables} \label{prop:same-G}
Let $f,g: \pr^k \to \pr^k$ be a commuting pair of endomorphisms of degree $d_f,d_g \geq 2$. Choose lifts $F,G: \C^{k+1} \to \C^{k+1}$ such that $F \circ G = G \circ F$ and denote by $G_F$ and $G_G$  their corresponding Green functions. Then $G_F = G_G$. In particular $T_f = T_g$ and $J_s(f) = J_s(g)$ for every $s=1,\ldots ,k.$
\end{proposition}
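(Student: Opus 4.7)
The plan is to prove directly that $G_F = G_G$ on $\C^{k+1}$; the statements about $T_f$ and the Julia sets then follow by applying $\ddc$ and taking supports of wedge powers. The main idea is that commutation forces $G_F$ to satisfy a functional equation with respect to $G$ analogous to the standard one $G_F \circ F = d_f\, G_F$. As a preparatory remark, since $F$ and $G$ extend holomorphically to $\pr^k$ we have $F^{-1}(0) = G^{-1}(0) = \{0\}$, so homogeneity together with compactness of the unit sphere gives two-sided bounds $c_1\|z\|^{d_f} \leq \|F(z)\| \leq c_2\|z\|^{d_f}$ and analogously for $G$, from which the standard construction shows $G_F(z) = \log\|z\| + h_F(z)$ with $h_F$ bounded and $\C^*$-invariant (and likewise for $G_G$). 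Consequently $u := G_F - G_G$ is bounded on $\C^{k+1}\setminus\{0\}$ and descends to a continuous function on $\pr^k$.

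I would next exploit commutation $F^n \circ G = G \circ F^n$ to compute
$$G_F(G(z)) = \lim_{n\to\infty}\frac{1}{d_f^n}\log\|F^n(G(z))\| = \lim_{n\to\infty}\frac{1}{d_f^n}\log\|G(F^n(z))\| = d_g\, G_F(z),$$
where the final equality uses $\log\|G(w)\| = d_g\log\|w\| + O(1)$ and the fact that the bounded error dies after dividing by $d_f^n$. Combined with the standard invariance $G_G \circ G = d_g\, G_G$, this yields $u \circ G = d_g\, u$. Iterating gives $u = d_g^{-n}(u\circ G^n)$; since $u$ is bounded by some $M$, letting $n \to \infty$ forces $u \equiv 0$, so $G_F = G_G$.

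Finally, $G_F = G_G$ immediately gives $\ddc G_F = \ddc G_G$, which after descending through the canonical projection $\pi:\C^{k+1}\setminus\{0\}\to\pr^k$ gives $T_f = T_g$. Taking wedge powers yields $T_f^s = T_g^s$ for every $s$, hence $J_s(f) = \supp T_f^s = \supp T_g^s = J_s(g)$. I do not anticipate a serious obstacle: the whole argument reduces to the two-sided growth estimate for homogeneous maps extending to $\pr^k$ and a contraction on the space of bounded functions, with the commutation relation entering only to produce the crucial identity $G_F \circ G = d_g\, G_F$.
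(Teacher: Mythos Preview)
Your argument is correct. The paper does not give its own proof of this proposition; it simply cites \cite{dinh-sibony:endo-permutables}. Your proof is the standard one from that reference: use the two-sided estimate $\log\|G(w)\| = d_g\log\|w\| + O(1)$ together with commutation to obtain $G_F\circ G = d_g\,G_F$, subtract the defining relation $G_G\circ G = d_g\,G_G$, and conclude that the bounded function $u = G_F - G_G$ satisfies $u\circ G = d_g\,u$, hence vanishes. There is nothing to add.
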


\begin{proposition} \label{prop:same-E}
 Let $f,g: \pr^k \to \pr^k$ be a commuting pair of endomorphisms. Denote by $\mathcal E_f$ and $\mathcal E_g$ the corresponding exceptional sets. Then $\mathcal E_f = \mathcal E_g$.
\end{proposition}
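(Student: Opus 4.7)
The plan is to exploit the maximality clause of Theorem \ref{thm:equidistribution-preimages}: the exceptional set $\mathcal E_f$ is characterized as containing every proper algebraic subset $E$ of $\pr^k$ satisfying $f^{-n}(E)\subset E$ for some $n\geq 1$. I would first verify that $g^{-1}(\mathcal E_f)$ is a proper algebraic subset of $\pr^k$: algebraicity is automatic, and since $g:\pr^k\to\pr^k$ is surjective, $g^{-1}(\mathcal E_f) = \pr^k$ would force $\mathcal E_f = g(g^{-1}(\mathcal E_f)) = \pr^k$, contradicting the properness of $\mathcal E_f$.

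Using the commutation relation $f\circ g = g\circ f$, one gets the key identity
\[
f^{-1}\bigl(g^{-1}(\mathcal E_f)\bigr) \;=\; g^{-1}\bigl(f^{-1}(\mathcal E_f)\bigr) \;=\; g^{-1}(\mathcal E_f),
\]
where the second equality uses the total invariance $f^{-1}(\mathcal E_f) = \mathcal E_f$. Applying the maximality of $\mathcal E_f$ to $E = g^{-1}(\mathcal E_f)$ (with $n=1$) yields $g^{-1}(\mathcal E_f)\subset \mathcal E_f$. But then $\mathcal E_f$ itself is a proper algebraic subset of $\pr^k$ satisfying $g^{-1}(\mathcal E_f)\subset \mathcal E_f$, so the maximality of $\mathcal E_g$, applied with $n=1$, forces $\mathcal E_f\subset \mathcal E_g$. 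Exchanging the roles of $f$ and $g$ gives the reverse inclusion, and hence $\mathcal E_f=\mathcal E_g$.

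I do not anticipate any serious obstacle: the whole argument is a formal consequence of the characterization of the exceptional set as the maximal backward-invariant proper algebraic subset together with the elementary fact that preimages under commuting maps commute. The only minor technicality is checking that $g^{-1}(\mathcal E_f)$ is proper, which is handled by the surjectivity of $g$.
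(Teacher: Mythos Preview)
Your proof is correct and follows essentially the same strategy as the paper's: use the maximality of $\mathcal E_f$ to show $g^{-1}(\mathcal E_f)\subset\mathcal E_f$, then the maximality of $\mathcal E_g$ to conclude $\mathcal E_f\subset\mathcal E_g$. Your verification of the key inclusion $f^{-1}(g^{-1}(\mathcal E_f))\subset g^{-1}(\mathcal E_f)$ via the set-theoretic identity $f^{-1}\circ g^{-1}=g^{-1}\circ f^{-1}$ is in fact a bit cleaner than the paper's component-by-component argument, and your explicit check that $g^{-1}(\mathcal E_f)$ is proper fills in a point the paper leaves implicit.
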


\begin{proof}
We will show that $\mathcal E_f \subset \mathcal E_g$, the other inclusion being analogous. By the maximality of $\mathcal E_g$ it suffices to show that $g^{-1}(\mathcal E_f) \subset \mathcal E_f$ and by the maximality of $\mathcal E_f$ this will follow if we show that $f^{-1} (g^{-1}(\mathcal E_f)) \subset g^{-1}(\mathcal E_f).$

Let $V$ be an irreducible component of $f^{-1} (g^{-1}(\mathcal E_f))$. We have that $f(V) \subset g^{-1}(\mathcal E_f)$, so $f(g(V)) = g(f(V)) \subset \mathcal E_f$, meaning that $g(V) \subset f^{-1}(\mathcal E_f) = \mathcal E_f$. This shows that $V \subset g^{-1}(\mathcal E_f)$ finishing the proof.
\end{proof}

\subsection*{Lattès maps} Lattès maps (see the Introduction for the definition) form a special class of holomorphic endomorphisms of $\pr^k$. They possess a certain rigidity and therefore can be characterized by several extremal properties, see for instance \cite{berteloot-loeb:lattes} and \cite{berteloot-dupont:lattes}. One of these characterizations is in terms of Green currents and measures.

\begin{theorem} [Berteloot-Loeb, Berteloot-Dupont] \label{thm:m=4-lattes} Let $f: \pr^k \to \pr^k$ be a holomorphic endomorphism of degree $d \geq 2$.
\begin{enumerate}
\item If the Green current of $f$ is smooth and strictly positive in some nonempty open subset of $\pr^k$, then $f$ is a Lattès map.
\item If the Green measure of $f$ is nonzero and absolutely continuous with respect to the Lebesgue measure in some open subset of $\pr^k$, then $f$ is a Lattès map.
\end{enumerate}
\end{theorem}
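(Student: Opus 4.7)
The strategy I would pursue treats both parts via the Lyapunov spectrum of $(f,\mu)$. Denote by $\chi_1 \geq \cdots \geq \chi_k$ the Lyapunov exponents; by Briend-Duval one always has $\chi_i \geq \frac{1}{2}\log d$. The plan is to show that under either hypothesis equality holds for every $i$, and then to exploit the resulting rigidity to identify $f$ as a Latt\`es map.

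Part (1) reduces to part (2): if $T$ is a smooth strictly positive $(1,1)$-form on a nonempty open $U \subset \pr^k$, then its wedge power $T^k$, computed pointwise from smooth forms, is a smooth strictly positive volume form on $U$, so $\mu = T^k$ is absolutely continuous with respect to Lebesgue on $U$ with nonvanishing density there.

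For part (2), let $V$ be an open set with $\mu|_V = \rho \cdot \Leb$, $\rho \in L^1_{loc}$ and $\rho \not\equiv 0$. Using the total invariance $f^* \mu = d^k \mu$, the change of variables formula together with the identity $|\det_{\R} Df| = |\det_{\C} Df|^2$ yields, at a.e.\ preimage of $V$ off the critical set,
\[ |\det_{\C} Df(y)|^2 \, \rho(f(y)) = d^k \rho(y). \]
Taking logarithms, integrating against $\mu$, and using the $f$-invariance of $\mu$ to cancel the $\log \rho$ terms, one extracts the Jacobian identity $\int \log |\det_{\C} Df|\, d\mu = \frac{k}{2}\log d$. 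Combined with the Oseledec-Pesin formula $\sum_i \chi_i = \int \log |\det_{\C} Df|\, d\mu$ and the Briend-Duval lower bound, this forces $\chi_i = \frac{1}{2}\log d$ for every $i$, so that $(f,\mu)$ has minimal Lyapunov spectrum.

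The main obstacle is passing from minimal Lyapunov spectrum to the Latt\`es conclusion. Following Berteloot-Dupont, I would analyze the action of $Df$ on the unitary frame bundle over a generic orbit: minimality of all exponents forces the tangent dynamics to be conformal in a very strong orbit-uniform sense, and through Pesin's stable manifold machinery, a rescaling argument on the orbit, and a Poincar\'e map at a repelling periodic point, one should construct a semi-conjugation to an affine map of $\C^k$ whose linear part is $\sqrt d\, U$ with $U$ unitary, yielding the Latt\`es factorization. A secondary obstacle is to make the density-Jacobian identity rigorous, since $\rho$ may vanish on parts of $V$, $\mu$ is only locally absolutely continuous, and $\log|\det_{\C} Df|$ blows up on the critical hypersurface which may carry positive $\mu$-mass. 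This I would handle by restricting to sub-open sets where $\rho$ is bounded between positive constants, iteratively pulling back the density identity under $f$, and converting local density information into global integration against $\mu$ by means of the equidistribution of preimages given in Theorem \ref{thm:equidistribution-preimages}.
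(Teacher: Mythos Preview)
The paper does not prove this theorem: it is quoted as an external result, attributed to Berteloot--Loeb and Berteloot--Dupont and cited to \cite{berteloot-loeb:lattes} and \cite{berteloot-dupont:lattes}, with no argument supplied. There is therefore no in-paper proof to compare your proposal against.

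As a standalone sketch, your outline follows the Berteloot--Dupont strategy (minimal Lyapunov spectrum $\Rightarrow$ Latt\`es), and the reduction of (1) to (2) is fine. The genuine gap is in your derivation of $\int \log|\det_{\C} Df|\, d\mu = \tfrac{k}{2}\log d$: the cancellation ``$\int \log \rho \circ f \, d\mu = \int \log \rho \, d\mu$'' presupposes that $\mu$ is \emph{globally} absolutely continuous with density $\rho$ and that $\log \rho \in L^1(\mu)$, neither of which you have from the hypothesis that $\mu$ is absolutely continuous only on some open $V$. Your proposed remedy (restrict to where $\rho$ is bounded away from $0$, pull back, and invoke equidistribution of preimages) does not obviously close this; equidistribution of Dirac masses does not by itself let you integrate $\log|\det_{\C}Df|$ against $\mu$ starting from a local density relation. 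In the actual Berteloot--Dupont argument one first upgrades local absolute continuity to a global statement (via ergodicity of $\mu$ and the branched-covering structure of $f$, or via a Hausdorff-dimension argument showing $\dim_H \mu = 2k$), and only then runs the Jacobian/entropy computation. Your final step, from minimal exponents to the Latt\`es semiconjugacy, is itself a substantial theorem; the sketch you give (Pesin theory, rescaling along orbits, Poincar\'e map) names the right ingredients but is too schematic to stand as a proof.
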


\begin{corollary} \label{cor:commuting-lattes}
Let $f,g: \pr^k \to \pr^k$ be a commuting pair of endomorphisms of degree $d_f,d_g \geq 2$. If $f$ is a Lattès map then so is $g$.
\end{corollary}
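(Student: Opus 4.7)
The plan is to deduce the corollary directly from Proposition \ref{prop:same-G} together with Theorem \ref{thm:m=4-lattes}. First I would choose commuting polynomial lifts $F,G:\C^{k+1}\to\C^{k+1}$ of $f$ and $g$, which is possible since $f$ and $g$ commute. Proposition \ref{prop:same-G} then tells us that the Green currents coincide, $T_f = T_g$, and hence the Green measures $\mu_f = T_f^{\,k} = T_g^{\,k} = \mu_g$ also coincide.

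Second, I would translate the Lattès hypothesis on $f$ into a regularity statement for its Green current or measure. By the very definition of a Lattès map, the commutative diagram
\begin{equation*}
\varphi \circ \Lambda_f = f \circ \varphi
\end{equation*}
with $\varphi:\C^k\to\pr^k$ a ramified cover and $\Lambda_f$ affine with linear part $\sqrt d\,U$, $U$ unitary, shows that $T_f$ is the pushforward by $\varphi$ of a constant multiple of the standard Kähler form on $\C^k$, while $\mu_f$ is the pushforward of a constant multiple of Lebesgue measure. In particular, on the complement of the (analytic) branch locus of $\varphi$, both $T_f$ and $\mu_f$ are smooth and strictly positive. Thus on a nonempty open set of $\pr^k$, either (1) or (2) of Theorem \ref{thm:m=4-lattes} applies.

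Third, since $T_g = T_f$ and $\mu_g = \mu_f$, the same regularity is inherited by $g$'s dynamical objects. Applying Theorem \ref{thm:m=4-lattes} to $g$, we conclude that $g$ is itself a Lattès map. The only subtlety worth writing down carefully is the fact, used in the second step, that $\mu_f$ is (a constant times) the pushforward of Lebesgue measure under $\varphi$: this is immediate from the $\Lambda_f$-invariance of Lebesgue measure on $\C^k$, the transitivity of $\mathcal A$ on fibers of $\varphi$, and the uniqueness of the measure of maximal entropy \cite{briend-duval:mesure}. No genuine obstacle is expected, so the argument should be essentially a three-line assembly of the two preceding results.
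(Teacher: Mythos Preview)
Your proposal is correct and follows essentially the same route as the paper: use the fact that the Green current (or measure) of a Latt\`es map is smooth and strictly positive on a nonempty open set, transfer this to $g$ via Proposition~\ref{prop:same-G}, and conclude with Theorem~\ref{thm:m=4-lattes}. The paper's proof is terser---it simply asserts the smoothness of $T_f$ without your justification via the pushforward description---but the logical skeleton is identical.
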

\begin{proof}
Since $f$ is a Lattès map its Green current is smooth and strictly positive in some nonempty open subset of $\pr^k$. From Proposition \ref{prop:same-G} the same holds for the Green current of $g$. The conclusion follows from Theorem \ref{thm:m=4-lattes}.
\end{proof}

\section{Entire curves and Ahlfors currents}

When dealing with non-normal families of meromorphic functions of one variable, a useful tool is the classical Zalcman's renormalization lemma. By applying it to the several variable setting we get the following result.

\begin{lemma} ["Zalcman's Lemma", \cite{dinh-sibony:endo-permutables}]  \label{lemma:zalcman} Let $\{\psi_n\}$ be a non-equicontinuous family of germs of holomorphic maps from a neighborhood of the origin in $\C^k$ taking values in $\C^\ell$. Suppose that the sequence $\{\psi_n (0)\}$ is bounded. Then there is a nonzero vector $v \in \C^k$, a sequence of complex numbers $z_i \to 0$, a sequence of real numbers $\rho_i \to 0$ and an increasing sequence of integers $n_i \to \infty$ such that the maps $\psi_{n_i}(z_i v + \rho_i \xi v)$ converge to a non-constant holomorphic map $\phi(\xi)$ from $\C$ to $\C^\ell$.

\end{lemma}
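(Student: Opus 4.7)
The plan is to reduce the statement to the classical one-dimensional Zalcman renormalization by picking a complex line on which the restricted family $\{\psi_n\}$ is still non-equicontinuous, and then applying a Zalcman--Marty maximizer argument on the disk.

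First I would extract the direction $v$. Non-equicontinuity of $\{\psi_n\}$ at $0$ gives $\epsilon_0 > 0$ and, after a subsequence, points $a_n \to 0$ with $\|\psi_n(a_n) - \psi_n(0)\| \geq \epsilon_0$; write $a_n = \tau_n w_n$ with $\|w_n\| = 1$ and $\tau_n \to 0$, and extract once more so that $w_n \to v$ for some unit vector $v$. One then checks that the one-variable family $h_n(t) := \psi_n(tv)$ on a small disk $D(0,r) \subset \C$ is non-equicontinuous at $0$: if the restrictions of $\{\psi_n\}$ along every complex direction were equicontinuous at $0$, a compactness argument over the unit sphere of directions together with the Cauchy estimates for holomorphic families would recover equicontinuity of $\{\psi_n\}$ itself, contradicting the hypothesis. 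By construction, $\{h_n(0)\} = \{\psi_n(0)\}$ is bounded.

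Next I would apply the standard one-variable Zalcman construction to $\{h_n\}$ on $D(0,r)$. Set $M_n(t) := \|h_n'(t)\|(1 - |t|^2/r^2)$; non-equicontinuity forces $\sup_{|t|<r} M_n(t) \to \infty$ along a subsequence. Choose a maximizer $z_n$ and let $\rho_n := (1 - |z_n|^2/r^2)/\|h_n'(z_n)\| \to 0$. The rescaled maps $g_n(\xi) := h_n(z_n + \rho_n \xi) - h_n(z_n)$ then satisfy $\|g_n'(0)\| = 1$ together with uniform bounds on $\|g_n'\|$ over every compact subset of $\C$, via the classical Zalcman comparison of $M_n(z_n + \rho_n\xi)$ with its maximum $M_n(z_n)$. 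Since $g_n(0) = 0$, the family $\{g_n\}$ is locally uniformly bounded, and a Montel/diagonal extraction delivers a non-constant entire limit $\tilde\phi: \C \to \C^\ell$ with $\|\tilde\phi'(0)\| = 1$. Writing $\psi_{n_i}(z_i v + \rho_i \xi v) = g_{n_i}(\xi) + h_{n_i}(z_{n_i})$ and passing to a further subsequence along which the constants $h_{n_i}(z_{n_i})$ converge to some $c \in \C^\ell$, one recovers the desired limit $\phi = \tilde\phi + c$.

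The main obstacle is to guarantee that the sequence of translating constants $h_n(z_n) = \psi_n(z_n v)$ remains bounded in $\C^\ell$; without this the candidate limit $\phi$ could fail to take values in $\C^\ell$. The boundedness hypothesis on $\{\psi_n(0)\}$ enters precisely here: one has to refine the choice of $r$, of the direction $v$, and of the Zalcman maximizer $z_n$ so that $z_n$ stays close enough to $0$ for a chain of estimates connecting $\psi_n(0)$ to $\psi_n(z_n v)$ to yield the required control. Once this is secured, a final diagonal extraction produces the stated convergence and completes the argument.
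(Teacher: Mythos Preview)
The paper does not prove this lemma: it is quoted verbatim from Dinh--Sibony \cite{dinh-sibony:endo-permutables} and used as a black box, so there is no ``paper's own proof'' to compare against. Your strategy --- restrict to a well-chosen complex line through the origin and run the one-variable Zalcman--Marty maximizer --- is the standard route and is essentially how the result is obtained in the cited reference.

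That said, your write-up has two genuine soft spots, one of which you flag yourself. First, the passage from non-equicontinuity of $\{\psi_n\}$ to non-equicontinuity of the restrictions $h_n(t)=\psi_n(tv)$ for a \emph{fixed} $v$ is not justified by the argument you sketch. Knowing that $a_n=\tau_n w_n$ with $w_n\to v$ and $\|\psi_n(a_n)-\psi_n(0)\|\geq\epsilon_0$ does not by itself control $\psi_n(\tau_n v)$, since $\|\psi_n(\tau_n w_n)-\psi_n(\tau_n v)\|$ involves the very derivatives that are blowing up; and the sentence ``a compactness argument over the unit sphere together with Cauchy estimates would recover equicontinuity'' hides a uniformity-in-direction issue that needs an actual estimate. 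The clean way around this is to choose the direction $v$ and the subsequence simultaneously, using the vectors realizing $\|D\psi_n\|$ rather than the points $a_n$.

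Second --- and you say this yourself --- the boundedness of the constants $h_{n_i}(z_{n_i})=\psi_{n_i}(z_{n_i}v)$ is the crux, and your proposal stops short of securing it. Without it the rescaled maps converge only after subtracting the centers, and the statement as written (limit with values in $\C^\ell$, not in a compactification) fails. The usual fix is to run the maximizer argument on shrinking disks $D(0,r_n)$ with $r_n\to 0$ chosen so that the maximizer $z_n$ is forced to stay in a region where $|\psi_n|$ is still comparable to $|\psi_n(0)|$; equivalently, one can work with the spherical metric on the one-point compactification of $\C^\ell$ and then argue a posteriori that the limit avoids infinity because $\psi_n(0)$ is bounded and the rescaled derivatives are controlled. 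Until one of these is carried out, the argument is a plausible outline rather than a proof.
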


In the same spirit of renormalization there is another useful tool which is the notion of Ahlfors current. It is a positive closed current constructed from the image of an increasing sequence of discs by an entire map.

\begin{theorem} \label{thm:ahlfors-construction}
Let $\phi: \C \to X$ be a non constant holomorphic map onto a complex manifold $X$. Denote by $D_r$ the closed disc of radius $r$ and center $0$ in $\C$. Then there is a sequence $r_n \nearrow \infty$ such that the positive currents $$R_n = \frac{\phi_* [D_{r_n}]}{\|\phi_* [D_{r_n}]\|}$$ converge to a positive closed current $R$ of mass $1$ whose support is contained  in $\overline {\phi(\C)}$.

\end{theorem}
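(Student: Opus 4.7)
The plan is to implement the classical Ahlfors length-area argument. Fix a Hermitian form $\omega$ on $X$ and define
\[
A(r) := \int_{D_r} \phi^*\omega = \|\phi_*[D_r]\|, \qquad L(r) := \|\phi_*[\partial D_r]\|,
\]
so that $A(r)$ is the area of $\phi(D_r)$ counted with multiplicity and $L(r)$ is the length of $\phi(\partial D_r)$. Since $\phi$ is non-constant, $A(r_0) > 0$ for some $r_0$. A direct computation of $\phi^*\omega$ and the length integrand in polar coordinates $z = se^{i\theta}$, followed by Cauchy--Schwarz applied to the circumference integral, yields the \emph{Ahlfors length-area inequality}
\[
L(r)^2 \leq 2\pi r\, A'(r).
\]

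The key analytic step is to show that $\liminf_{r \to \infty} L(r)/A(r) = 0$. If this failed, there would exist $c > 0$ such that $L(r) \geq c A(r)$ for all $r \geq r_0$, and combining with the Ahlfors inequality would give $A'(r)/A(r)^2 \geq c^2/(2\pi r)$. Integrating from $r_0$ to $R$ yields
\[
\frac{1}{A(r_0)} \geq \frac{1}{A(r_0)} - \frac{1}{A(R)} \geq \frac{c^2}{2\pi}\log(R/r_0),
\]
which is absurd as $R \to \infty$: the logarithmic divergence of the right-hand side contradicts the trivial boundedness of the left-hand side. One may therefore extract $r_n \nearrow \infty$ with $L(r_n)/A(r_n) \to 0$. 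This is the heart of the argument, and the only place where analysis (as opposed to soft compactness) enters.

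For such a sequence, set $R_n := \phi_*[D_{r_n}]/A(r_n)$; these are positive currents of mass one satisfying $\|dR_n\| = L(r_n)/A(r_n) \to 0$. Weak-$\ast$ compactness of the unit ball in the space of currents provides a subsequential limit $R$, which is positive (as a limit of positive currents), closed (since $d$ is weakly continuous and $\|dR_n\| \to 0$), and supported in $\overline{\phi(\C)}$ because each $R_n$ is supported in $\phi(D_{r_n})$ and these sets exhaust $\phi(\C)$. The one remaining subtlety is that the mass of $R$ must equal $1$; this is automatic when $X$ is compact, the only setting in which the theorem will be applied in the paper, while in full generality one would have to argue that mass does not escape to infinity, for instance by confining $\phi(\C)$ to a relatively compact piece of $X$.
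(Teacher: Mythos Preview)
The paper does not actually prove this theorem; it is stated as a classical result and used as a tool, with no proof supplied in the text. Your proposal is precisely the standard Ahlfors length--area argument, and it is correct: the inequality $L(r)^2 \leq 2\pi r\,A'(r)$ together with the divergence of $\int dr/r$ forces $\liminf L(r)/A(r)=0$, and any cluster value of the normalized disc currents along such a subsequence is positive, closed, and supported in $\overline{\phi(\C)}$.

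The only point worth flagging is the one you already flagged yourself: that the limit current has mass exactly $1$. In the paper the theorem is applied only with $X=\pr^2$ (or after composing with $\varphi$ to land in $\pr^2$), so compactness guarantees there is no loss of mass and the argument is complete for the intended applications. For a general non-compact $X$ the statement as written is not quite right without an extra hypothesis preventing escape of mass, but this does not affect anything in the paper.
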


As an application we have the following.

\begin{proposition} \label{prop:J1=J2}
Let $f:\pr^2 \to \pr^2$ be a holomorphic endomorphism of degree $d \geq 2$. Suppose that the maximal order Julia set $J_2$ contains a holomorphic disc passing through a repelling periodic point. Then $J_1 = J_2$. 
\end{proposition}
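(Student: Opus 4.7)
The plan is to manufacture a non-constant entire curve $\phi:\C \to \pr^2$ whose image is contained in $J_2$. Once such a curve is in hand, Theorem \ref{thm:ahlfors-construction} produces a positive closed $(1,1)$-current of unit mass supported in $J_2$; Cesaro averaging via Lemma \ref{lemma:cesaro} then yields a totally invariant $(1,1)$-current of mass one supported in $J_2$; and Proposition \ref{prop:currentJk} identifies this limit with the Green current $T$. This forces $J_1 = \supp T \subset J_2$, and the reverse inclusion $J_2 \subset J_1$ is automatic since $T^2 = T \wedge T$ is supported in $\supp T$.

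\textbf{Step 1: an entire curve in $J_2$.} Replacing $f$ by an iterate (which alters neither the Julia sets nor the Green current), assume the repelling point $p$ is fixed. Let $\gamma:\mathbb{D} \to \pr^2$ parametrize the given disc, with $\gamma(0)=p$ and $\gamma(\mathbb{D}) \subset J_2$, and fix an affine chart $U \cong \C^2$ around $p$ sending $p$ to the origin. The germs $\psi_n := f^n \circ \gamma$ at $0$ then define holomorphic maps with values in $\C^2$ satisfying $\psi_n(0)=0$ and $\psi_n'(0) = (Df_p)^n \gamma'(0)$. Since every eigenvalue of $Df_p$ has modulus greater than $1$, the norms $\|\psi_n'(0)\|$ tend to $\infty$, so the family $\{\psi_n\}$ is not equicontinuous at $0$. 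Applying Lemma \ref{lemma:zalcman} produces $z_i \to 0$, $\rho_i \to 0$ and $n_i \nearrow \infty$ such that $\psi_{n_i}(z_i + \rho_i \xi)$ converges locally uniformly to a non-constant entire map $\phi:\C \to \C^2 \hookrightarrow \pr^2$. For every compact $K \subset \C$ and $i$ large, the approximating values $\psi_{n_i}(z_i + \rho_i \xi) = f^{n_i}(\gamma(z_i + \rho_i \xi))$ lie in $f^{n_i}(\Delta) \subset J_2$; closedness of $J_2$ gives $\phi(\C) \subset J_2$.

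\textbf{Step 2: from the entire curve to $T$.} By Theorem \ref{thm:ahlfors-construction}, there are radii $r_n \nearrow \infty$ for which $\phi_*[D_{r_n}]/\|\phi_*[D_{r_n}]\|$ converges to a positive closed $(1,1)$-current $R$ on $\pr^2$ of mass $1$ with $\supp R \subset \overline{\phi(\C)} \subset J_2$. Form the Cesaro sums $S_n = \frac{1}{n}\sum_{j=0}^{n-1} d^{-j}(f^j)^* R$. The total invariance $f^{-1}(J_2)=J_2$ ensures that each summand is supported in $J_2$, hence any cluster value $S$ of $(S_n)$ has $\supp S \subset J_2$; Lemma \ref{lemma:cesaro} moreover yields that $S$ is a positive closed $(1,1)$-current of mass one satisfying $f^* S = d \cdot S$. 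Proposition \ref{prop:currentJk} then gives $S = T$, so $J_1 = \supp T \subset J_2$, and combined with the trivial reverse inclusion we conclude $J_1 = J_2$.

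\textbf{Main obstacle.} The delicate point is Step 1. The existence and non-constancy of the renormalized limit are exactly the content of Lemma \ref{lemma:zalcman}; the repelling nature of $p$ is used only to supply non-equicontinuity of $\{\psi_n\}$ at $0$. The containment $\phi(\C) \subset J_2$ then requires nothing more than the closedness and total invariance of $J_2$ applied to the reparametrized images $\psi_{n_i}(z_i + \rho_i K)$, which remain inside $f^{n_i}(\Delta)$. The remainder of the proof is a purely formal sequence of applications of results already recorded in the preceding sections.
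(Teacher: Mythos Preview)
Your overall strategy matches the paper's: produce an entire curve in $J_2$, then Ahlfors $+$ Ces\`aro $+$ Proposition~\ref{prop:currentJk}. Step~2 is identical to the paper's argument and is correct.

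The difference, and a genuine technical gap, is in Step~1. You apply Lemma~\ref{lemma:zalcman} to the germs $\psi_n = f^n \circ \gamma$ viewed as maps into a fixed affine chart $\C^2 \subset \pr^2$. But as maps to $\C^2$ these are only defined on neighborhoods of $0$ that shrink to $\{0\}$ as $n\to\infty$: since $p$ is repelling, $f^n$ eventually pushes any point $\gamma(\xi)$ with $\xi \neq 0$ out of a prescribed chart. Lemma~\ref{lemma:zalcman} (as in \cite{dinh-sibony:endo-permutables}) concerns a family on a common neighborhood of the origin; without this, the reparametrized maps $\xi \mapsto \psi_{n_i}(z_i + \rho_i \xi)$ need not be defined on discs of radius tending to infinity, and one cannot extract an \emph{entire} limit in $\C^2$.

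The paper handles exactly this point by first passing to the Poincar\'e map $\varphi:\C^2\to\pr^2$ at the fixed point, which conjugates $f$ to a triangular automorphism $\Lambda$ of $\C^2$. One then applies Lemma~\ref{lemma:zalcman} to the family $\{\Lambda^n|_{\Sigma^*}\}$ with $\Sigma^* = \varphi^{-1}(\Sigma)$; the $\Lambda^n$ are globally defined polynomial automorphisms of $\C^2$, so there is no domain issue, and the resulting entire curve in $J^* = \varphi^{-1}(J_2)$ is pushed forward to $J_2$ by $\varphi$. Your argument becomes correct if you either insert this linearization step, or replace the appeal to Lemma~\ref{lemma:zalcman} by Brody's reparametrization lemma for maps into the compact target $\pr^2$---which is valid but is not the statement the paper supplies.
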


\begin{proof}
Let $\Sigma$ be a holomorphic disc passing through a repelling periodic point $a \in J_2$. After replacing $f$ by an iterate we may suppose that $a$ is a fixed point. Consider the Poincaré map $\varphi: \C^k \to \pr^k$ associated with $a$ and let $\Lambda$ be the corresponding  triangular lift (see section \ref{sec:poincare}). Set $\Sigma^* = \varphi^{-1}(\Sigma)$ and $J^* = \varphi^{-1}(J_2)$. Notice that $J^*$ is invariant by $\Lambda$.

  Since $\Lambda$ is dilating, the family $\{\Lambda^n|_{\Sigma^*}: n\geq 1\}$ is not normal. Using Lemma \ref{lemma:zalcman} we can produce an entire curve whose image is contained in $\overline{\bigcup_{n \geq 1}\Lambda^n(\Sigma^*)} \subset J^*$. Composing with $\varphi$ we get an entire curve contained in $J_2$. By Theorem \ref{thm:ahlfors-construction}  there is a positive closed $(1,1)$-current $R$ supported by $J_2$.
  
The sequence of Cesaro sums $S_n = \frac{1}{n} \sum_{j=1}^n d^{-j} (f^j)^*R$ admits a subsequence converging  to a positive closed $(1,1)$-current of unit mass $S$ that is totally invariant $f^*S = d \cdot S$ (Lemma \ref{lemma:cesaro}). Since $J_2$ is closed and totally invariant, the current $S$ is supported by $J_2$, so $S = T$ by Proposition \ref{prop:currentJk}. In particular $ J_1  = \supp S  \subset J_2$. As $J_1 \supset J_2$, the result follows.
\end{proof}

\section{Repelling periodic points and Poincaré maps} \label{sec:poincare}

A germ of holomorphic map $\Lambda:(\C^k,0) \to (\C^k,0)$ is called {\it triangular} if it is invertible and has the form $$ \Lambda(z)  = \big(\lambda_1z_1, \lambda_2 z_2 +  P_2(z_1), \ldots , \lambda_k z_k + P_k(z_1,\ldots,z_{k-1}) \big),$$
where the $P_j$ are polynomials containing only the monomials $z_1^{\ell_1} \cdots z_{j-1}^{\ell_{j-1}}$ for which $\lambda_j = \lambda_1^{\ell_1} \cdots \lambda_{j-1}^{\ell_{j-1}}$. In other words, $\Lambda$ may contain non-linear terms in the presence of resonances of the $k$-uple $\lambda = (\lambda_1,\ldots,\lambda_k)$.

In the case $k=2$ the triangular maps take the form $\Lambda(z) = (\lambda_1 z_1, \lambda_2 z_2 + \alpha z_1^ \ell)$ if $ \lambda_2 = \lambda_1^\ell$ and we call them resonant of index $\ell$, or $\Lambda(z) = (\lambda_1 z_1, \lambda_2 z_2)$ if $\lambda_2 \neq \lambda_1^\ell$ for every $\ell = 1,2,\ldots$ and we call them non-resonant.

The following statements follow from straightforward computations.

\begin{lemma} \label{lemma:lambda-gamma}
Let $\Lambda$ and $\Gamma$ be two triangular maps of $\C^2$.
\begin{enumerate}
\item Suppose that $\Lambda \circ \Gamma = \Gamma \circ \Lambda$ and that one of $\Lambda$ or $\Gamma$ is non-diagonal. Then $\Lambda$ and $\Gamma$ have the same resonance index.

\item Suppose that $\Lambda$ and $\Gamma$ are $\ell$-resonant, so that $\Lambda(z_1,z_2) = (\lambda_1 z_1, \lambda_1^\ell z_2 + \alpha z_1^ \ell)$ and $\Gamma(z_1,z_2) = (\gamma_1 z_1, \gamma_1^\ell z_2 + \beta z_1^ \ell)$. Denoting $\xi = \lambda_1 \gamma_1^{-1}$ we have  $$\Lambda \circ \Gamma^{-1}(z_1,z_2) = (\xi z_1,\xi^\ell z_2 + \gamma_1^{-\ell}(\alpha - \beta \xi^\ell) \, z_1^ \ell)$$ and $$(\Lambda \circ \Gamma^{-1})^n(z_1,z_2) = (\xi^n z_1,\xi^{n\ell} z_2 + n \xi^{(n-1)\ell} \gamma_1^{-\ell}(\alpha - \beta \xi^\ell) \, z_1^ \ell).$$

In particular, the group generated by $\Lambda$ and $\Gamma$ is contained in the two-dimen\-sional complex Lie group of maps of the form $$\Lambda_{t,s} (z_1,z_2) = (t z_1, t^\ell z_2 + s z_1^\ell),  \;\; t \in \C^*, s \in \C.$$
\end{enumerate}

\end{lemma}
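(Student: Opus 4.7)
Both parts are direct symbolic computations in the triangular normal form; the only real content is how to organize them.

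For (1), I would write the general forms $\Lambda(z) = (\lambda_1 z_1,\, \lambda_2 z_2 + \alpha z_1^\ell)$ (with $\lambda_2 = \lambda_1^\ell$ when $\alpha \neq 0$) and $\Gamma(z) = (\gamma_1 z_1,\, \gamma_2 z_2 + \beta z_1^m)$ (with $\gamma_2 = \gamma_1^m$ when $\beta \neq 0$), and expand $\Lambda \circ \Gamma$ and $\Gamma \circ \Lambda$ coordinatewise. The first coordinates agree automatically; equating the second coordinates and canceling the common $\lambda_2 \gamma_2 z_2$ term produces a polynomial identity in $z_1$ whose only nontrivial content is
\[
\beta(\lambda_1^\ell - \lambda_1^m)\, z_1^m + \alpha(\gamma_1^\ell - \gamma_1^m)\, z_1^\ell = 0.
\]
The hypothesis that one of the two maps is non-diagonal means $\alpha \neq 0$ or $\beta \neq 0$. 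If $\Gamma$ is diagonal ($\beta = 0$), this reduces to $\gamma_2 = \gamma_1^\ell$, so $\Gamma$ is resonant of index $\ell$. If both are non-diagonal, matching the coefficients of the distinct monomials $z_1^\ell$ and $z_1^m$ forces $\ell = m$ (the alternative $\lambda_1^\ell = \lambda_1^m$ and $\gamma_1^\ell = \gamma_1^m$ is excluded once one excludes eigenvalues that are roots of unity, which is the setting of interest for Poincar\'e lifts of repelling periodic points).

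For (2), I would first invert $\Gamma$ by solving $(w_1,w_2) = \Gamma(z_1,z_2)$ explicitly, obtaining
\[
\Gamma^{-1}(w_1,w_2) = \bigl(\gamma_1^{-1} w_1,\; \gamma_1^{-\ell} w_2 - \gamma_1^{-2\ell}\beta\, w_1^\ell\bigr),
\]
and then compose with $\Lambda$ to read off the stated formula for $\Lambda \circ \Gamma^{-1}$, with the coefficient $\gamma_1^{-\ell}(\alpha - \beta \xi^\ell)$ emerging from combining the $\alpha \gamma_1^{-\ell}$ and $-\lambda_1^\ell \gamma_1^{-2\ell}\beta$ contributions. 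For the iterate formula I would proceed by induction on $n$: applying $\Lambda \circ \Gamma^{-1}$ to $(\Lambda \circ \Gamma^{-1})^n(z_1,z_2)$ multiplies the coefficient of $z_1^\ell$ by $\xi^\ell$ and adds one more copy of $\gamma_1^{-\ell}(\alpha - \beta\xi^\ell)$, so $n \xi^{(n-1)\ell}\gamma_1^{-\ell}(\alpha-\beta\xi^\ell)$ is incremented to $(n+1)\xi^{n\ell}\gamma_1^{-\ell}(\alpha-\beta\xi^\ell)$, exactly as required.

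For the final ``in particular'' assertion, the plan is to check that $\{\Lambda_{t,s}\}_{t \in \C^*,\, s \in \C}$ is a subgroup of $\text{Aut}(\C^2)$ by computing directly
\[
\Lambda_{t,s} \circ \Lambda_{t',s'} = \Lambda_{tt',\, t^\ell s' + (t')^\ell s}, \qquad \Lambda_{t,s}^{-1} = \Lambda_{t^{-1},\, -t^{-2\ell}s},
\]
which realizes the family as a two-dimensional complex Lie group (a semidirect product of $\C^*$ with $\C$ via the weight-$\ell$ action). Since $\Lambda = \Lambda_{\lambda_1,\alpha}$ and $\Gamma = \Lambda_{\gamma_1,\beta}$ both lie in this group, the subgroup they generate does too. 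I do not anticipate a real obstacle beyond careful bookkeeping; the only point needing a moment of attention is the mixed case in (1) where one of the maps is diagonal, since one must still extract a common resonance index from the commutation relation alone.
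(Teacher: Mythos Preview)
Your proposal is correct and matches the paper's approach exactly: the paper simply states that the lemma ``follows from straightforward computations'' and gives no further details, so your explicit bookkeeping is precisely what is intended. Your caveat in part (1) about excluding eigenvalues that are roots of unity is well placed; in the paper's setting the triangular maps arise from Sternberg linearization at repelling fixed points (so $|\lambda_i|>1$), which makes the resonance index uniquely defined and validates that step.
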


We recall a classical theorem of Sternberg.

\begin{theorem*}[Sternberg]
Let $h:(\C^k,0) \to (\C^k,0)$ be a germ of holomorphic map. Suppose that $0$ is a repelling fixed point for $h$. Then there is an invertible holomorphic germ $\varphi: (\C^k,0) \to (\C^k,0)$ such that $\varphi^{-1} \circ h \circ \varphi$ is triangular.
\end{theorem*}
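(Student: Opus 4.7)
The plan is to carry out the classical Poincar\'e--Dulac normalization scheme in the expanding (Poincar\'e) domain where all eigenvalues of $Dh(0)$ have modulus $>1$.

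First, I would perform a linear change of coordinates putting $Dh(0)$ in upper-triangular Jordan form, with eigenvalues $\lambda_1,\ldots,\lambda_k$ ordered by modulus. After this preliminary linear conjugation we may write $h(z) = \Lambda z + O(\|z\|^2)$ with $\Lambda$ upper-triangular and $|\lambda_j|>1$ for every $j$.

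Next I would eliminate non-resonant higher-order terms inductively on the degree. Assuming $h$ is already in triangular form modulo terms of order $\geq d$, I would search for a polynomial change of coordinates $\psi_d(z) = z + Q_d(z)$, where $Q_d$ is vector-valued homogeneous of degree $d$, such that $\psi_d^{-1} \circ h \circ \psi_d$ is triangular modulo order $d+1$. Matching coefficients monomial-by-monomial reduces to scalar equations whose solvability is controlled by the small denominator $\lambda^\ell - \lambda_j$: it vanishes precisely at the resonances $\lambda_j = \lambda_1^{\ell_1}\cdots\lambda_k^{\ell_k}$, at which the corresponding monomial must be kept. The ordering by modulus together with $|\lambda_i|>1$ then forces the surviving resonant monomials in the $j$-th coordinate to depend only on $z_1,\ldots,z_{j-1}$: indeed, if some $\ell_i>0$ with $i \geq j$, then $|\lambda_j| = \prod_s |\lambda_s|^{\ell_s} \geq |\lambda_i|^{\ell_i} \geq |\lambda_j|^{\ell_i}$, and a short case analysis combined with $|\ell|\geq 2$ yields a contradiction. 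This gives precisely the triangular shape required by the definition.

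A crucial observation is that in the Poincar\'e domain the bound $|\lambda^\ell| \geq (\min_j |\lambda_j|)^{|\ell|}$ grows geometrically in $|\ell|$, so only finitely many multi-indices $\ell$ with $|\ell|\geq 2$ can satisfy a resonance $\lambda_j=\lambda^\ell$; in particular the formal normal form is a polynomial. The main remaining task---and the genuine technical core of the theorem---is to show that the formal conjugation $\varphi = \lim_{d\to\infty} \psi_2\circ\psi_3\circ\cdots\circ\psi_d$ converges on a neighborhood of $0$. The same lower bound on $|\lambda^\ell|$ gives a uniform lower bound on $|\lambda^\ell - \lambda_j|$ away from the finite exceptional set, and then a classical majorant-series argument due to Poincar\'e---bounding the Taylor coefficients of $\varphi$ by those of a solution of a scalar functional equation with nonnegative coefficients---yields analytic convergence of $\varphi$ near the origin. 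This convergence estimate is the principal obstacle; the preceding normalization is essentially formal bookkeeping.
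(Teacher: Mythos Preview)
The paper does not prove this statement; it is simply recalled as a classical theorem of Sternberg and used without further justification. Your sketch is a correct outline of the standard Poincar\'e--Dulac normalization in the expanding (Poincar\'e) domain, which is indeed how this result is established in the holomorphic category.

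Two minor points if you want to make the sketch airtight: first, when $Dh(0)$ has nontrivial Jordan blocks the homological operator $Q\mapsto \Lambda Q - Q\circ\Lambda$ on homogeneous vector fields is only block-triangular (not diagonal) in the monomial basis, so the inductive elimination of non-resonant terms needs a line of explanation; second, your ordering of eigenvalues should be explicitly by \emph{increasing} modulus so that your inequality $|\lambda_i|\geq|\lambda_j|$ for $i\geq j$ holds and the surviving resonant monomials in the $j$-th slot involve only $z_1,\ldots,z_{j-1}$, matching the paper's definition of triangular. With those caveats the plan is sound, and your identification of the convergence estimate as the genuine technical core is accurate.
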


Using the dilation nature of the triangular map it is possible to obtain a global version of the above theorem.

\begin{proposition} \cite{dinh-sibony:endo-permutables} \label{prop:poincare}
Let $f$ be a holomorphic endomorphism of $\pr^k$ of degree $d \geq 2$. Let $b$ be a repelling fixed point of $f$. Then there is a holomorphic map $\varphi: \C^k \to \pr^k$ with $\varphi(0) = b$ and $D \varphi(0)$ invertible and a triangular map $\Lambda$ such that $f \circ \varphi = \varphi \circ \Lambda$.

If furthermore $b \in J_k$ then the complement $\pr^k \setminus \varphi(\C^k)$ is the exceptional set $\mathcal E$ of $f$.
\end{proposition}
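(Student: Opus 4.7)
The plan is to first produce a local linearizing conjugacy via Sternberg's theorem, extend it globally using the expanding nature of the inverse $\Lambda^{-1}$, and finally identify the complement of the image by means of the equidistribution of preimages.

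\textbf{Local normalization and global extension.} Since $b$ is repelling, the eigenvalues $\lambda_1,\dots,\lambda_k$ of $Df(b)$ all satisfy $|\lambda_i|>1$. Sternberg's theorem applied in an affine chart around $b$ yields a biholomorphic germ $\varphi_0\colon(\C^k,0)\to(\pr^k,b)$ and a triangular germ $\Lambda$ with $f\circ\varphi_0=\varphi_0\circ\Lambda$ on a neighborhood $U$ of $0$. Triangularity of $\Lambda$ extends it to a global automorphism of $\C^k$ with $\Lambda^{-n}(z)\to 0$ for every $z\in\C^k$. For any $z\in\C^k$, choose $n$ large enough that $\Lambda^{-n}(z)\in U$ and define
$$\varphi(z):=f^n\bigl(\varphi_0(\Lambda^{-n}(z))\bigr).$$
The local intertwining ensures independence of $n$, agreement with $\varphi_0$ on $U$, and the global identity $f\circ\varphi=\varphi\circ\Lambda$; the conditions $\varphi(0)=b$ and invertibility of $D\varphi(0)$ follow from the same properties of $\varphi_0$.

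\textbf{The complement equals $\mathcal E$.} Assume now $b\in J_k$. For $a\notin\mathcal E$, Theorem~\ref{thm:equidistribution-preimages} gives $d^{-nk}(f^n)^*\delta_a\to\mu$; since $b\in J_k=\supp\mu$, the open neighborhood $\varphi_0(U)$ of $b$ has positive $\mu$-mass, hence for large $n$ some $x\in f^{-n}(a)$ lies in $\varphi_0(U)$. Writing $x=\varphi_0(y)$ and applying the functional equation, $a=f^n(x)=\varphi(\Lambda^n y)\in\varphi(\C^k)$, so $\pr^k\setminus\mathcal E\subset\varphi(\C^k)$. For the reverse inclusion $\varphi(\C^k)\cap\mathcal E=\emptyset$, consider $A:=\varphi^{-1}(\mathcal E)$: by total $f$-invariance of $\mathcal E$ and bijectivity of $\Lambda$, it is an analytic subset of $\C^k$ that is totally $\Lambda$-invariant. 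If $A\neq\emptyset$, contracting any $z_0\in A$ by $\Lambda^{-n}$ and using closedness of $A$ forces $0\in A$, i.e.\ $b\in\mathcal E$; one rules this out using that the repelling dynamics at $b\in J_k$ is incompatible with $b$ lying on a totally invariant algebraic component of $\mathcal E$.

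\textbf{Main obstacle.} The construction of $\varphi$ and the first inclusion $\pr^k\setminus\mathcal E\subset\varphi(\C^k)$ are essentially routine once Sternberg and the equidistribution theorem are granted. The delicate point is the final assertion $A=\emptyset$, which requires excluding that the repelling periodic point $b\in J_k$ sits inside $\mathcal E$. The hypothesis $b\in J_k$ is used in two places: to secure $\mu(\varphi_0(U))>0$ in the equidistribution argument, and, more subtly, to force $b\notin\mathcal E$ and thereby identify $\pr^k\setminus\varphi(\C^k)$ with $\mathcal E$.
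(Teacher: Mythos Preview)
The paper does not supply a proof of this proposition; it is quoted from \cite{dinh-sibony:endo-permutables} without argument, so there is nothing in the text to compare your sketch against.

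On its own merits: your construction of $\varphi$ via Sternberg followed by the functional--equation extension is the standard one and is correct, and the inclusion $\pr^k\setminus\mathcal E\subset\varphi(\C^k)$ via Theorem~\ref{thm:equidistribution-preimages} is the right argument. You also correctly reduce the reverse inclusion to the single assertion $b\notin\mathcal E$. The gap --- which you yourself flag as the ``main obstacle'' --- is that this assertion is never justified: you only say that repelling dynamics is ``incompatible'' with $b$ lying on a totally invariant component of $\mathcal E$. One way to close it is to show that every irreducible component $V$ of $\mathcal E$ lies in the critical set of an iterate of $f$. For a hypersurface component with $(f^N)^{-1}(V)=V$, the divisorial identity $(f^N)^*[V]=d^N[V]$ forces $V$ into the ramification locus; for a totally invariant point the local degree is $d^{Nk}>1$. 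In either case the differential of $f^N$ fails to be invertible along $V$, so a repelling fixed point cannot lie there. Note, incidentally, that this step uses only that $b$ is repelling; contrary to what your closing paragraph suggests, the hypothesis $b\in J_k$ is needed solely to guarantee $\mu(\varphi_0(U))>0$ in the equidistribution half, not to exclude $b\in\mathcal E$.
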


\begin{definition}
The map $\varphi: \C^k \to \pr^k$ as in Proposition \ref{prop:poincare} is called the {\bf Poincaré map} associated with $b$.
\end{definition}

The following statement, together with Proposition \ref{prop:same-G} imply that two commuting endomorphisms admit a common Poincaré map.

\begin{proposition} \cite{dinh-sibony:endo-permutables} \label{prop:poincare2}
 Let $f,g:\pr^k \to \pr^k$ be two commuting endomorphisms of degree $d_f,d_g \geq 2$. Let $b$ be a common fixed point for $f$ and $g$ which is repelling for $f$. Then there exist a holomorphic map $\varphi: \C^k \to \pr^k$ with $\varphi(0) = b$ and $D\varphi(0)$ invertible and commuting triangular maps $\Lambda_f, \Lambda_g$ such that $f \circ \varphi = \varphi \circ \Lambda_f$ and $g \circ \varphi = \varphi \circ \Lambda_g$.
\end{proposition}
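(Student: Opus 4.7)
My plan is to start from the Poincar\'e map of $f$ supplied by Proposition \ref{prop:poincare} and use the dilating nature of $\Lambda_f$ to globalize the local linearization of $g$ coming from the commutation relation.

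Applying Proposition \ref{prop:poincare} to $f$ at the fixed point $b$ yields $\varphi: \C^k \to \pr^k$ with $\varphi(0) = b$, $D\varphi(0)$ invertible, and a triangular map $\Lambda_f$ such that $f \circ \varphi = \varphi \circ \Lambda_f$. Since $\varphi$ is a local biholomorphism at $0$ and $g(b) = b$, I set $\Lambda_g := \varphi^{-1} \circ g \circ \varphi$ as a holomorphic germ at $0$. The commutation $f \circ g = g \circ f$, transported through $\varphi$ via $f \circ \varphi = \varphi \circ \Lambda_f$, then gives $\Lambda_f \circ \Lambda_g = \Lambda_g \circ \Lambda_f$ as germs at $0$ by a direct substitution.

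To globalize $\Lambda_g$, I would use that $b$ is repelling for $f$: the triangular $\Lambda_f$ is then a polynomial automorphism of $\C^k$ whose eigenvalues at $0$ all have modulus greater than $1$, so $\Lambda_f^{-n}$ sends any $z \in \C^k$ into any prescribed neighborhood of $0$ for $n$ large. For such $n$ I would define
\[
\Lambda_g(z) := \Lambda_f^{n} \circ \Lambda_g \circ \Lambda_f^{-n}(z),
\]
the local commutativity making the value independent of $n$ once it is large enough. This produces a global holomorphic map $\Lambda_g: \C^k \to \C^k$ that agrees with the germ near $0$. The identity $g \circ \varphi = \varphi \circ \Lambda_g$ then extends to all of $\C^k$: applying $f^n$ to the local version of it evaluated at $\Lambda_f^{-n}(z)$ and using $f^n \circ \varphi = \varphi \circ \Lambda_f^{n}$ yields the global semiconjugacy. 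Global commutativity $\Lambda_f \circ \Lambda_g = \Lambda_g \circ \Lambda_f$ is immediate from the construction.

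The remaining and most delicate step, which I expect to be the main obstacle, is to verify that $\Lambda_g$ has triangular form. The relation $\Lambda_f \circ \Lambda_g = \Lambda_g \circ \Lambda_f$ with triangular $\Lambda_f$ imposes strong algebraic constraints on $\Lambda_g$: expanding in Taylor series at $0$ and comparing coefficients, the same non-resonance relations that force the non-admissible monomials out of $\Lambda_f$ force the corresponding Taylor coefficients of $\Lambda_g$ to vanish, so $\Lambda_g$ becomes triangular inductively in the degree. Extra care is required in degenerate eigenvalue configurations (for instance when $D\Lambda_f(0)$ is a scalar multiple of the identity): there one first performs a preliminary simultaneous upper-triangularization of the commuting linear parts $D\Lambda_f(0)$ and $D\Lambda_g(0)$ before running Sternberg's theorem, which ensures the triangular structures of $\Lambda_f$ and $\Lambda_g$ are compatible and completes the proof.
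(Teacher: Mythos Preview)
The paper does not give its own proof of this proposition; it is quoted from \cite{dinh-sibony:endo-permutables}. So there is no in-paper argument to compare against, and I will assess your sketch on its own terms. The overall architecture --- take the Poincar\'e map of $f$ from Proposition \ref{prop:poincare}, define $\Lambda_g$ locally by conjugation, and globalize via $\Lambda_g(z)=\Lambda_f^{n}\circ\Lambda_g\circ\Lambda_f^{-n}(z)$ using that $\Lambda_f$ is expanding --- is exactly the standard route, and your globalization argument is fine.

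The genuine gap is in your last paragraph, where you claim the commutation relation forces $\Lambda_g$ to be triangular. What the relation $\Lambda_f\circ\Lambda_g=\Lambda_g\circ\Lambda_f$ actually gives, after Taylor expansion, is that a monomial $z^\alpha$ can survive in the $j$-th component of $\Lambda_g$ only if $\lambda_j=\lambda^\alpha$, a resonance condition on the eigenvalues $\lambda_i$ of $\Lambda_f$. This does make $\Lambda_g$ a polynomial upper-triangular map (once the linear parts are simultaneously upper-triangularized), but ``triangular'' in the paper's sense requires the surviving monomials to satisfy $\gamma_j=\gamma^\alpha$, a resonance for the eigenvalues $\gamma_i$ of $\Lambda_g$. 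There is no reason the $\lambda$-resonances and the $\gamma$-resonances should coincide. A concrete obstruction in dimension $2$: if $\Lambda_f=(\lambda_1 z_1,\lambda_1^\ell z_2)$ is diagonal with $\lambda_2=\lambda_1^\ell$, then any $\Lambda_g=(\gamma_1 z_1,\gamma_2 z_2+\beta z_1^\ell)$ commutes with $\Lambda_f$, but this $\Lambda_g$ is triangular only if $\gamma_2=\gamma_1^\ell$, which need not hold.

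The fix is not a further Taylor-coefficient argument but a further adjustment of $\varphi$: the Poincar\'e map for $f$ is not unique, and the freedom consists precisely of post-composing with polynomial automorphisms that commute with $\Lambda_f$ (equivalently, shears built from $\lambda$-resonant monomials). In the example above, conjugating by $\psi(z_1,z_2)=(z_1,z_2+cz_1^\ell)$ leaves $\Lambda_f$ unchanged (since $\lambda_2=\lambda_1^\ell$) and, choosing $c=-\beta/(\gamma_2-\gamma_1^\ell)$, kills the offending term of $\Lambda_g$. In general one removes, inductively in the degree, every monomial of $\Lambda_g$ that is $\lambda$-resonant but not $\gamma$-resonant by such a shear; this is the step that makes the two triangular structures compatible, and it is what your sketch is missing.
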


In the next sections, we will need to apply the above proposition to different periodic points on $J_k$. For this purpose we will the use next lemma together with the fact that the repelling periodic points belonging to $J_k$ are dense in $J_k$ \cite{briend-duval:liapounoff}.

\begin{lemma}  \cite{dinh-sibony:endo-permutables} \label{lemma:common-periodic}
Let $f,g:\pr^k \to \pr^k$ be a commuting pair of endomorphisms. Then $f$ and $g$ possess infinitely many common periodic points in $J_k$ which are repelling for $f$. Furthermore, given a repelling periodic point $a$ of $f$ there is an $m\geq 0$ such that $g^m(a)$ is periodic for $f$ and $g$ and repelling for $f$.
\end{lemma}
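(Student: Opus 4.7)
My plan is to prove the second, more detailed assertion first, and then deduce the first from it. Let $a$ be a repelling periodic point of $f$ with $f^n(a) = a$. Since $f \circ g = g \circ f$ entails $f^n \circ g = g \circ f^n$, the map $g$ preserves the finite fixed-point set $F = \{x \in \pr^k : f^n(x) = x\}$. The forward $g$-orbit $(g^j(a))_{j \geq 0}$ lies in $F$ and, being a sequence in a finite set, is eventually periodic: there exist $m \geq 0$ and $p \geq 1$ with $g^{m+p}(a) = g^m(a)$. Setting $b := g^m(a)$ gives $f^n(b) = b$ and $g^p(b) = b$, so $b$ is a common periodic point of $f$ and $g$.

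The subtler part is to verify that $b$ is repelling for $f$. The natural approach is to differentiate the identity $f^n \circ g^m = g^m \circ f^n$ at $a$, which yields the intertwining relation
\begin{equation*}
Df^n(b) \cdot Dg^m(a) = Dg^m(a) \cdot Df^n(a).
\end{equation*}
Writing $A = Df^n(a)$, $A' = Df^n(b)$, $B = Dg^m(a)$, this reads $A'B = BA$, and by hypothesis every eigenvalue of $A$ has modulus strictly greater than $1$. When $B$ is invertible, $A' = BAB^{-1}$ is similar to $A$ and is therefore also repelling. The delicate case is when $a$ lies on the critical locus of $g^m$: the relation $A'B = BA$ then only controls the eigenvalues of $A'$ on the invariant subspace $\text{Im}(B)$, and one must work harder. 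To pin down the remaining eigenvalues I would exploit the additional commutation $[Df^n(b), Dg^p(b)] = 0$ coming from $g^p(b) = b$, iterating the intertwining around the $g$-cycle $b, g(b), \ldots, g^{p-1}(b)$ in order to transfer the dilating behaviour of $A$ to the full tangent space at $b$. Resolving this degenerate case is where I expect the main technical effort.

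Finally, the first assertion follows from the second: by Briend-Duval the repelling periodic points of $f$ in $J_k$ form a dense, hence infinite, set, and applying the second assertion to each one yields a common repelling periodic point. A refinement of the argument---comparing the $f$-periods of the resulting $b$'s as we vary $a$ over repelling periodic points of increasingly large $f$-period, and exploiting that the $g$-grand backward orbit of any finite set of points of bounded $f$-period cannot exhaust the repelling periodic points of all $f$-periods---shows that infinitely many distinct common repelling periodic points are produced.
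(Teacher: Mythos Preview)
The paper does not supply its own proof of this lemma; it is quoted from \cite{dinh-sibony:endo-permutables}, so there is no argument in the present paper to compare against.

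Your plan is sound up to the point you yourself flag. The finite set $F = \mathrm{Fix}(f^n)$ is $g$-stable because $f$ and $g$ commute, so the $g$-orbit of $a$ inside $F$ is eventually periodic and produces a common periodic point $b$. The genuine gap is the ``repelling for $f$'' clause when $B = Dg^m(a)$ is singular, and your proposed resolution does not close it. The commutation $[A', Dg^p(b)] = 0$ only says that $A'$ and $C := Dg^p(b)$ are simultaneously triangularizable; it yields no lower bound on the eigenvalues of $A'$ in directions transverse to $\mathrm{Im}\,B$. Iterating the intertwining around the $g$-cycle gives relations $A_{i+1}G_i = G_iA_i$ with $A_{m+p} = A_m$, but whenever some $G_i$ drops rank, eigenvalue information is lost at that step and the cycle closing up does not recover it. Pure linear algebra imposes no obstruction here: already with $k=2$, $A_0 = \mathrm{diag}(2,3)$ and $G_0 = \mathrm{diag}(1,0)$, the relation $A_1G_0 = G_0A_0$ forces the first column of $A_1$ to be $(2,0)^t$ but leaves its second eigenvalue entirely free. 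Some genuinely dynamical input beyond differentiating $f^n g^m = g^m f^n$ is therefore needed, and your plan does not indicate what that input would be. Separately, your deduction of ``infinitely many'' from the second assertion is only a sketch: the claim that the backward $g$-orbit of a fixed finite set cannot absorb every repelling $f$-periodic point is asserted rather than proved, and a naive cardinality comparison does not settle it.
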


\section{Laminations of the equilibrium measure}

In this section, we follow \cite{dinh-sibony:endo-permutables} and show that the existence of the maps $\Lambda_f$ and $\Lambda_g$ as in Proposition \ref{prop:poincare2} enables us to produce laminations of the Julia set and  of the measure $\mu$.

We start by recalling a notion of laminarity that will be useful to us. It is worth mentioning that there are several non-equivalent definitions of this concept.

\begin{definition}
Let $X$ be a real analytic manifold of real dimension $n$. Let $J \subset X$ be a closed subset and $b \in J$.   We say that $J$ is \textit{$m$-laminated} at $b$ ($0\leq m \leq n$) if there is a neighborhood of $b$ of the form $ \Omega = U \times V$ with $U \subset \R^m$ and $V \subset \R^{n-m}$ open subsets and a real analytic coordinate system $(x_1,\ldots,x_m, x_{m+1}, \ldots, x_n)$ defined on $\Omega$ such that $\Omega \cap J = U \times K$ for some closed set $K \subset V$.

Let $\nu$ be a measure on $X$. We say that $\nu$ is \textit{$m$-laminated} at $b$ if there are $\Omega$ and $(x_1,\ldots,x_m, x_{m+1}, \ldots, x_n)$ as above such that $\nu$ is a product of the Lebesgue measure $\Leb_m$ on $U$ and a measure $\nu'$ on $V$.
\end{definition}

\begin{remark} \label{rmk:local-flow}
Notice that if a measure $\nu$ is $m$-laminated at a point $b$ then the constant vector fields $\del / \del x_1, \ldots, \del / \del x_m$ on $\R^n$ induce vector fields near $b$ whose corresponding local flows preserve $\nu$.
\end{remark}

The following lemma is an adaptation of a result by Eremenko (\cite{eremenko:functional-equations}, Proposition 1).

\begin{lemma} \label{lemma:eremenko}
Let $\nu$ be an $m$-laminated measure of the form $\nu = \Leb_m \otimes \nu'$ on $\Omega = U \times V \subset \R^m \times \R^{n-m}$. Let $\vartheta: \Omega \to \R^{n}$ be a non-vanishing vector field and denote by $\Gamma^t$ its local flow. If $(\Gamma^t)^* \nu = \nu$ and if $\vartheta(p)$ is transversal to $\R^m$, $p \in \Omega$ then $\nu$ is $(m+1)$-laminated at $p$.
\end{lemma}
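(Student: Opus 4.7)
The strategy is to construct, in a neighbourhood of $p$, real-analytic coordinates in which $\nu$ visibly decomposes as a product of $(m+1)$-dimensional Lebesgue measure and a transverse Radon measure. The $m$ Lebesgue directions come from the original $m$-lamination (which, by the preceding Remark, produces constant translation vector fields $\partial/\partial x_i$ preserving $\nu$), and the additional direction will be produced using the flow of $\vartheta$.

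Write $p=(x^{0},y^{0})\in\R^{m}\times\R^{n-m}$. Since $\vartheta(p)$ is transversal to $\R^{m}$, it has non-zero $y$-component; after a permutation of the $y$-coordinates we may assume $\vartheta^{y_{1}}(p)\neq 0$, and we write $y=(y_{1},\bar{y})$ with $\bar{y}\in\R^{n-m-1}$. The key parametrization is
\[
\Xi(a,t,\bar{y}) \;:=\; \tau^{a}\bigl(\Gamma^{t}(x^{0},y_{1}^{0},\bar{y})\bigr),\qquad \tau^{a}(x,y):=(x+a,y).
\]
A direct calculation of the differential at $(0,0,\bar{y}^{0})$ shows that its columns are $e_{1}^{x},\ldots,e_{m}^{x},\vartheta(p),e_{2}^{y},\ldots,e_{n-m}^{y}$, which is a basis of $\R^{n}$ precisely because $\vartheta^{y_{1}}(p)\neq 0$. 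Hence $\Xi$ is a real-analytic local diffeomorphism onto a neighbourhood of $p$.

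Since $\tau^{a'}$ preserves $\nu$ and $\Xi(a+a',t,\bar{y})=\tau^{a'}\circ\Xi(a,t,\bar{y})$, the pull-back $\Xi^{*}\nu$ is translation-invariant in the $a$-variable. A standard disintegration then gives
\[
\Xi^{*}\nu \;=\; \mathrm{Leb}_{m}(da)\otimes\omega,
\]
for some Radon measure $\omega$ on a neighbourhood of $(0,\bar{y}^{0})\in\R\times\R^{n-m-1}$; so an $m$-laminated structure in the $a$-directions is already apparent.

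It then suffices to show that $\omega$ is $1$-laminated at $(0,\bar{y}^{0})$. Let $F(t,\bar{y})$ be the $y$-component of $\Gamma^{t}(x^{0},y_{1}^{0},\bar{y})$; by $\vartheta^{y_{1}}(p)\neq 0$, $F$ is a real-analytic local diffeomorphism near $(0,\bar{y}^{0})$ onto a neighbourhood of $y^{0}$ in $V$, and a change-of-variables computation identifies $\omega$ with the pull-back $F^{*}\nu'$. Thus the problem reduces to showing $\nu'$ itself is $1$-laminated at $y^{0}$.

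For this last step we invoke the $m=0$ version of the lemma (a direct application of the flow-box theorem): it suffices to produce a non-vanishing real-analytic vector field $\eta$ on a neighbourhood of $y^{0}$ in $V$ preserving $\nu'$. Starting from the Lie-derivative identity $\mathcal{L}_{\vartheta}\nu=0$ and using the product structure $\nu=\mathrm{Leb}_{m}\otimes\nu'$, one integrates over the $\R^{m}$-fibres to extract such an $\eta$, non-vanishing at $y^{0}$ because $\vartheta^{y}(p)\neq 0$; the possible non-zero $x$-divergence of $\vartheta^{x}$ is absorbed by a local rescaling of the averaged field.

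The main obstacle is this last construction: the flow of $\vartheta$ on $\Omega$ does not in general descend to a flow on the quotient $V$, so the extraction of $\eta$ requires a careful averaging-plus-rescaling argument rather than a naive projection. Once $\eta$ is in hand, the flow-box theorem applied to $(\nu',\eta)$ yields the $1$-lamination of $\nu'$, and combining with the earlier steps gives the $(m+1)$-lamination of $\Xi^{*}\nu$ at $(0,0,\bar{y}^{0})$, which transports back through $\Xi$ to the required $(m+1)$-lamination of $\nu$ at $p$.
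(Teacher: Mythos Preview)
The paper does not supply a proof of this lemma; it only records that the statement is ``an adaptation of a result by Eremenko (\cite{eremenko:functional-equations}, Proposition~1)'', so there is nothing to compare your argument against directly.

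Your strategy is sound and essentially complete through the identification $\Xi^*\nu=\Leb_m(da)\otimes F^*\nu'$ and the reduction to the $1$-lamination of $\nu'$ at $y^0$. The computation $\omega=F^*\nu'$ follows cleanly from the translation-invariance of $\Leb_m$, exactly as you indicate.

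The only place that needs tightening is Step~7. As written, ``integrating over the $\R^m$-fibres'' suggests pairing against a bump function $\chi\in C_c^\infty(U)$, which yields a $C^\infty$ field $\eta$ on $V$; but the paper's definition of $m$-laminated demands \emph{real-analytic} coordinates, so the subsequent flow-box straightening must be real-analytic too. The fix is to evaluate rather than average: set $\eta_0(y):=\vartheta^{y}(x^{0},y)$ and $h_0(y):=(\operatorname{div}_x\vartheta^{x})(x^{0},y)$. Starting from $\int\vartheta(\chi\otimes\psi)\,d\nu=0$, varying $\chi$ over $C_c^\infty(U)$ and using continuity in $x$ gives, for every fixed $x$ and in particular $x=x^0$,
\[
\int_V \eta_0\cdot\nabla\psi\,d\nu' \;=\; \int_V h_0\,\psi\,d\nu',
\]
i.e.\ $\mathcal L_{\eta_0}\nu'=-h_0\,\nu'$. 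Both $\eta_0$ and $h_0$ are real-analytic, and $\eta_0(y^0)=\vartheta^{y}(p)\neq 0$. Solving $\eta_0(\log\rho)=h_0$ near $y^0$ (real-analytic flow-box) and putting $\tilde\eta=\rho\,\eta_0$ gives a real-analytic, non-vanishing field with $\mathcal L_{\tilde\eta}\nu'=0$, and the $m=0$ case finishes the job. This is precisely your ``rescaling'' idea, just with evaluation in place of averaging so that analyticity is preserved.
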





The main result of this section is a description of the local structure of the maximal order Julia set and the Green measure of a commuting pair of same degree.

\begin{proposition} \label{prop:lamination}
Let $f,g: \pr^2 \to \pr^2$ be commuting endomorphisms of degree $d$ such that $f^n \neq g^n$ for all $ n \geq 1$. Then the maximal order Julia set $J := J_2(f) = J_2(g)$ and the equilibrium measure  $\mu = \mu_f = \mu_g$ are laminated outside an analytic subset of $\pr^2$. If $m$ denotes the maximal integer such that $J$ and $\mu$ are $m$-laminated in some open subset of $\pr^2$, then $m=3$ or $4$.
\end{proposition}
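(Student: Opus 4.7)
The plan is to attach to each common repelling periodic point a positive-dimensional compact abelian Lie group of symmetries of the dynamics, use its orbits to produce a local lamination of $J_2$ and $\mu$, glue these local pieces using density of periodic points, and then bootstrap the lamination dimension via Lemma~\ref{lemma:eremenko}.

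First I would apply Lemma~\ref{lemma:common-periodic} to fix a common periodic point $a\in J_2$ that is repelling for $f$, and after passing to iterates assume $a$ is a common fixed point. Proposition~\ref{prop:poincare2} then gives a common Poincaré map $\varphi:\C^2\to\pr^2$ with $\varphi(0)=a$ and $D\varphi(0)$ invertible, together with commuting triangular germs $\Lambda_f,\Lambda_g$ satisfying $f\circ\varphi=\varphi\circ\Lambda_f$ and $g\circ\varphi=\varphi\circ\Lambda_g$. Lemma~\ref{lemma:lambda-gamma} places both $\Lambda_f$ and $\Lambda_g$ inside the two-dimensional abelian complex Lie group $G$ of triangular automorphisms $\Lambda_{t,s}$, every element of which commutes with them.

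Next I would build a positive-dimensional compact abelian subgroup $H\subset G$ out of the sequence $(\Lambda_f^n\circ\Lambda_g^{-n})_{n\ge 1}$. The explicit parametrisation from Lemma~\ref{lemma:lambda-gamma}, together with the equality $d_f=d_g$, forces this sequence to remain precompact in $G$, while hypothesis $(\star)$ prevents it from being eventually trivial (otherwise $f^N=g^N$ would hold on the open set $\varphi(\C^2)\subset\pr^2$, hence everywhere); a renormalization in the spirit of Lemma~\ref{lemma:zalcman} then extracts a non-trivial continuous limit, whose closure is the desired compact subgroup, with identity component $H^\circ$ a real torus of dimension $r\ge1$. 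Each $h\in H$ commutes with $\Lambda_f$ and $\Lambda_g$ and therefore preserves, locally near $0$, the set $\varphi^{-1}(J_2)$ and the measure $\varphi^{*}\mu$; transporting through the local biholomorphism $\varphi$ yields a local real-analytic lamination of $J_2$ near $a$ along which $\mu$ disintegrates as the leafwise Lebesgue measure against a transverse measure, and the tangent vector fields preserve $\mu$ by Remark~\ref{rmk:local-flow}. Since common periodic points that are repelling for $f$ form a dense subset of $J_2$ by Lemma~\ref{lemma:common-periodic} combined with Briend--Duval, repeating the construction at each of them and gluing (the local pieces come from subgroups of $G$ read in different Poincaré charts, and so agree on overlaps) produces a global lamination defined off the analytic set composed of the exceptional set $\mathcal E$ and the branch loci of the relevant Poincaré maps.

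The upper bound $m\le 4$ is automatic since $\pr^2$ has real dimension $4$. The heart of the argument is the lower bound $m\ge 3$, which I would establish by iterating Lemma~\ref{lemma:eremenko}: any additional vector field preserving $\mu$ that is transverse to the present lamination boosts its dimension by one, and such transverse fields are supplied by the compact tori attached to nearby common repelling periodic points; density, together with the dependence of the parameters $(t,s)$ of the generators on the chosen point, should furnish transverse directions unless the lamination is already three-dimensional. The main obstacle is excluding $m=2$: a two-dimensional leaf is stable under the circle subaction of $H^\circ$ that rotates the coordinates diagonally and must therefore be either a complex-analytic curve or a Levi-flat real hypersurface in $\C^2$; complex leaves passing through repelling periodic points of $J_2$ are forbidden by Proposition~\ref{prop:currentJk}, Proposition~\ref{prop:J1=J2} and Proposition~\ref{prop:complex-curves-J}, whereas the Levi-flat case requires a separate and delicate holonomy and transverse-measure analysis. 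This last case, rather than the construction of the lamination itself, is where I expect the most technical effort.
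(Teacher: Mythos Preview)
Your overall architecture matches the paper's: build the compact positive-dimensional group $\overline{\langle \Lambda_f \circ \Lambda_g^{-1}\rangle}$ (Lemma~\ref{lemma:generated-group}), use its orbits to laminate $\mu^*$, and bootstrap with Lemma~\ref{lemma:eremenko}. However, your exclusion of $m=2$ contains a genuine error.

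Your dichotomy for two-dimensional leaves is wrong. A real two-dimensional analytic surface in $\C^2$ is either a complex curve or is \emph{totally real} at a generic point; it cannot be a ``Levi-flat real hypersurface'', which by definition has real dimension three. You have conflated the $m=2$ analysis with the Levi-flat subcase of $m=3$ treated later in Section~\ref{sec:m=3}. The correct alternative to ``complex leaf'' is ``totally real leaf'', and this is exactly the case that finishes the paper's proof: once Lemma~\ref{lemma:S0totreal} forces the leaf $S_0$ through $0$ to be totally real, one arranges $T_0S_0=\{\im z_1=\im z_2=0\}$; invariance of $S_0$ under $\Lambda_f,\Lambda_g$ then makes all eigenvalues $\lambda_i,\gamma_i$ real, and Corollary~\ref{cor:abs-values} gives $\lambda_i=\pm\gamma_i$. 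Hence $\Lambda_f^2\circ\Lambda_g^{-2}$ is either the identity or a unipotent map with unbounded iterates (Lemma~\ref{lemma:lambda-gamma}), contradicting respectively $(\star)$ or the compactness of the group. This eigenvalue-reality argument is the crux, and it is absent from your outline.

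Your treatment of the complex-leaf subcase is also insufficient. Proposition~\ref{prop:complex-curves-J} applies only to polynomial endomorphisms, and Proposition~\ref{prop:J1=J2} merely yields $J_1=J_2$, which is not by itself a contradiction. The paper's Lemma~\ref{lemma:S0totreal} proceeds differently: it classifies, case by case, the complex curves invariant under each possible one-parameter subgroup of the compact group, observes that in all but one case these curves all pass through $0$ (so they cannot fill a laminated neighbourhood and one gets transversality, bumping $m$ up), and in the remaining case $S_c=\{z_2=c\}$ descends the holomorphic foliation through the groupoid of local deck transformations of $\varphi$ to reach a contradiction via Lemma~\ref{lemma:no-hol-fol}. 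Finally, you skip the separate exclusion of $m=1$ (Lemma~\ref{lemma:mgeq2}), which in the paper already requires an eigenvalue analysis and the use of a \emph{second} periodic point to manufacture a transverse orbit; your sentence about ``density\ldots should furnish transverse directions'' is where that work actually has to happen.
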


Before proceeding to the proof let us introduce some notation and state some preliminary results.

Fix a common repelling fixed point $a$ of $f$ and $g$. Let $\varphi:\C^2 \to \pr^2$ be its associated Poincaré map and $\Lambda_f,\Lambda_g$ the corresponding triangular lifts. Define the current $T^* = \varphi^* T$ and the measure $\mu^* = \varphi^* \mu$. When $f$ and $g$ are polynomial, set $G^* = G \circ \varphi$, where $G$ is the common Green function of $f$ and $g$.

We have the following invariance relations: $$\Lambda_f^* T^* = d \cdot T^* =\Lambda_g T^*, \;\;\;\;\Lambda_f^* \mu^* = d^2 \cdot \mu^* = \Lambda_g^* \mu^*$$ and when $f$ and $g$ are polynomial we have also $G^* \circ \Lambda_f= d\cdot G^* = G^* \circ \Lambda_g$.

\begin{lemma} \label{lemma:generated-group}
The closed group generated by $\Lambda_f \circ \Lambda_g ^{-1}$, i.e.\ the closure of $\{\Lambda_f^n \circ \Lambda_g ^{-n}\}_{n \in \N}$ in the group of triangular maps, is a compact, positive dimensional Lie group. 
\end{lemma}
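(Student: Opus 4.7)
The plan is to analyze the closure of the cyclic subgroup generated by $\Lambda := \Lambda_f \circ \Lambda_g^{-1}$ inside the two-dimensional abelian complex Lie group $H$ of Lemma~\ref{lemma:lambda-gamma}. From the explicit formula for $\Lambda^n$ given there, boundedness of the sequence $\{\Lambda^n\}$ in $H$ is equivalent to $|\xi|=1$ \emph{and} $\alpha = \beta\xi^\ell$ (otherwise the linear-in-$n$ polynomial term escapes); positive-dimensionality of the closure then amounts to $\xi$ not being a root of unity. Thus compactness and positive-dimensionality reduce to separate conditions, the former to be derived from $d_f=d_g$ and the latter from~$(\star)$.

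For positive-dimensionality, once compactness is known, a compact $0$-dimensional Lie group is finite, so $\Lambda^n = \mathrm{id}$ for some $n\ge 1$, that is $\Lambda_f^n = \Lambda_g^n$. The semi-conjugations then give $f^n\circ\varphi = g^n\circ\varphi$, and since $\varphi(\C^2)=\pr^2\setminus\mathcal{E}$ is Zariski-dense (Proposition~\ref{prop:poincare}), the identity principle forces $f^n=g^n$, contradicting~$(\star)$.

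For compactness, the hypothesis $d_f=d_g=d$ combined with Proposition~\ref{prop:same-G} gives $\Lambda_f^*T^* = \Lambda_g^*T^* = d\,T^*$, so $\Lambda$ preserves both $T^*$ and $\mu^*$. I would first rule out $|\xi|\neq 1$. If $|\xi|>1$, the inverse $\Lambda^{-1}$ is strictly contracting, so for small $r$ the sets $\Lambda^{-n}(B_r)$ are contained in balls $B_{\rho_n}$ with $\rho_n\to 0$; invariance gives $\mu^*(\Lambda^{-n}(B_r)) = \mu^*(B_r) > 0$ (positive because $a=\varphi(0)\in J_2=\mathrm{supp}\,\mu$ and $\varphi$ is a local biholomorphism at $0$), which forces $\mu^*(\{0\})>0$ and contradicts the non-atomicity of $\mu^*$ near $0$. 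The case $|\xi|<1$ is symmetric, and the mixed-eigenvalue cases in the non-resonant setting are handled by noting that the nested intersections $\bigcap_n \Lambda^{\pm n}(B_r)$ would be concentrated on a $\Lambda$-invariant complex line, a proper analytic subset that $\mu^*$ does not charge. Assuming now $|\xi|=1$ in the resonant case, the vanishing $\alpha=\beta\xi^\ell$ is obtained by a parallel argument near the invariant axis $\{z_1=0\}$: otherwise $\Lambda^n$ would translate the transverse direction by an amount linear in $n$, producing infinitely many essentially disjoint sets of constant positive $\mu^*$-mass accumulating on an axis that carries none, again a contradiction.

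The main obstacle is this last step—excluding a nonzero resonant shift $\alpha-\beta\xi^\ell$ when $|\xi|=1$—since the iterates then neither contract nor expand in the relevant direction. The argument requires careful bookkeeping of the $\mu^*$-mass near the invariant axis $\{z_1=0\}$ and the fact that $\mu$ does not charge proper analytic curves in $\pr^2$.
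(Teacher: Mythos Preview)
Your positive-dimensionality argument is fine and matches the paper's: once the closure is compact, finiteness would force $\Lambda_f^n=\Lambda_g^n$ for some $n$, hence $f^n\circ\varphi=g^n\circ\varphi$, hence $f^n=g^n$ on the Zariski-dense set $\varphi(\C^2)$, contradicting~$(\star)$.

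For compactness, however, your route is genuinely different from the paper's and, as written, has gaps. The paper does \emph{not} do a case analysis on the eigenvalues of $\Lambda=\Lambda_f\circ\Lambda_g^{-1}$. Instead it argues: if $\{\Lambda^n\}$ is unbounded it is not equicontinuous, so Zalcman's Lemma (Lemma~\ref{lemma:zalcman}) produces a non-constant entire map $h:\C\to\C^2$ as a rescaled limit of the $\Lambda^n$. The invariance $(\Lambda^n)^*T^*=T^*$ then forces $h^*T^*=0$; writing $T=\omega+\ddc\mathfrak g$ with $\mathfrak g$ bounded, the function $-\mathfrak g\circ\varphi\circ h$ is bounded subharmonic on $\C$, hence constant, giving $(\varphi\circ h)^*\omega=0$, which is impossible for a non-constant map into $\pr^2$. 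This handles all unbounded behaviours at once.

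Your measure-theoretic approach works cleanly only in the purely contracting/dilating cases. In the non-resonant saddle case, say $|\xi_1|>1>|\xi_2|$, the sets $\Lambda^{-n}(B_r)$ are \emph{not} nested: each is an ellipsoid thin in $z_1$ and long in $z_2$, neither contained in nor containing $B_r$. Measure invariance gives $\mu^*(\Lambda^{-n}(B_r))=\mu^*(B_r)$ for each $n$, but says nothing about $\mu^*\bigl(\bigcap_n\Lambda^{\pm n}(B_r)\bigr)$, so you cannot conclude that $\mu^*$ charges the invariant axis. In the resonant case with $|\xi|=1$ and $\alpha\neq\beta\xi^\ell$, the images $\Lambda^n(B_r)$ do not ``accumulate on an axis'': since $|\xi|=1$ the $z_1$-coordinate stays on a fixed circle while the shift in $z_2$ grows linearly, so the images march off to infinity inside the tube $\{|z_1|\le r\}$. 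They are eventually pairwise disjoint, but $\mu^*=\varphi^*\mu$ is an infinite measure on $\C^2$, so disjointness of sets of equal positive mass is no contradiction. Both gaps are exactly the situations where the Zalcman argument earns its keep: it converts unboundedness of coefficients directly into an entire curve annihilating $T^*$, bypassing any control on how the mass of $\mu^*$ is distributed at infinity.
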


\begin{proof}
Notice that the $\Lambda_f^n \circ \Lambda_g ^{-n}$ are pairwise distinct by condition (\ref{eq:disjoint-iterates}). The maps $\Lambda_f$ and $\Lambda_g$ belong to a two dimensional complex Lie group composed by maps of the form $\Lambda_{t,s} (z)= (t z_1, t^\ell z_2 + s z_1^\ell)$ if there are resonances or $\Lambda_{t,s} = (t z_1, s z_2)$ otherwise (see Lemma \ref{lemma:lambda-gamma}).

We claim that sequence $\{\Lambda_f^n \circ \Lambda_g ^{-n}\}$ is  bounded. Suppose that this is not the case, so the coefficients of $\Lambda_f^n \circ \Lambda_g ^{-n}$ are unbounded. In particular the family $\{\Lambda_f^n \circ \Lambda_g ^{-n}\}$ is not equicontinuous. From Lemma \ref{lemma:zalcman} we can find a sequence of integers $\{n_i\}$, sequences $\{z_i\}$ and $\{\rho_i\}$ converging to $0$ and a vector $v$ in $\C^2$ such that $\Lambda_f^{n_i} \circ \Lambda_g ^{-n_i}(z_i v + \rho_i \zeta v)$ converges to a non-constant holomorphic map $h(\zeta)$ from $\C$ to $\C^2$.

We claim that $h^*T^* = 0$. Set $\phi_i (\zeta) = z_i v + \rho_i \zeta v$ and $h_i = \Lambda_f^{n_i} \circ \Lambda_g ^{-n_i} \circ \phi_i$, so $h = \lim h_i$. As $(\Lambda_f^{n_i} \circ \Lambda_g ^{-n_i})^* T^* = T^*$ we have $h_i^*T^* = \phi_i^* T^*$. It suffices then to show that $\phi_i^* T^* \to 0$ as $i \to \infty$. Writing $T^* = \ddc u$ for some continuous p.s.h.\ function on $\C^2$ with $u(0) = 0$, it is easy to see that $u \circ \phi_i \to 0$ in $L^1_{loc}$, as the image of a fixed compact set by the $\phi_i$  tends to $0$. This yields $\phi_i^* T^* \to 0$ proving the claim.

Write $T = \omega + \ddc \mathfrak g$, where $\omega$ is the Fubini-Study form and $\mathfrak g$ is a bounded function. Since $h^* \varphi^* T = h^*T^* = 0$ we have $\ddc (-\mathfrak g \circ \varphi \circ h) =  h^* \varphi^* \omega \geq 0$ on $\C$. In particular $-\mathfrak g \circ \varphi  \circ h $ is a bounded subharmonic function on $\C$, so it must be constant. We have then that $h^* \varphi^* \omega = 0$, which is absurd since the $(1,1)$-form $\omega$ is strictly positive.

The above contradiction shows that the sequence $\{\Lambda_f^n \circ \Lambda_g ^{-n}\}$ is bounded. 
As it is infinite, it must have an accumulation point. In particular, its closure is a compact, positive dimensional Lie group.
\end{proof}

\begin{corollary} \label{cor:abs-values}
Let $\lambda_i$ and $\gamma_i$, $i=1,2$ be the eigenvalues of $\Lambda_f'(0)$ and $\Lambda_g'(0)$ respectively, ordered by their absolute value. Then $|\lambda_i|=|\gamma_i|$ for $i=1,2$.
\end{corollary}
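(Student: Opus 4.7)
The plan is to apply Lemma \ref{lemma:generated-group} and transfer the compactness of the group generated by $\Lambda_f \circ \Lambda_g^{-1}$ to the level of derivatives at the origin. Set $A := d(\Lambda_f \circ \Lambda_g^{-1})(0) = \Lambda_f'(0) \cdot \Lambda_g'(0)^{-1}$. Since every element of the closed group $H$ generated by $\Lambda_f \circ \Lambda_g^{-1}$ fixes $0$, the chain rule makes $\Lambda \mapsto d\Lambda(0)$ a continuous group homomorphism $H \to \GL(2, \C)$. By Lemma \ref{lemma:generated-group} the group $H$ is compact, so its image in $\GL(2, \C)$ is compact as well, and in particular the bi-infinite sequence $\{A^n\}_{n \in \Z}$ is bounded.

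Next I would identify the eigenvalues of $A$ explicitly using the triangular form. By Lemma \ref{lemma:lambda-gamma}(1), the two maps have a common resonance index, so $\Lambda_f$ and $\Lambda_g$ are either both diagonal or simultaneously $\ell$-resonant for the same $\ell$. In either case the linear parts $\Lambda_f'(0)$ and $\Lambda_g'(0)$ are lower triangular with respective diagonals $(\lambda_1, \lambda_2)$ and $(\gamma_1, \gamma_2)$, so $A$ is lower triangular with diagonal $(\lambda_1/\gamma_1,\, \lambda_2/\gamma_2)$ --- this can also be read off the explicit formula in Lemma \ref{lemma:lambda-gamma}(2). Hence the eigenvalues of $A$ are precisely $\lambda_1/\gamma_1$ and $\lambda_2/\gamma_2$.

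A matrix whose bi-infinite iterates are uniformly bounded in $\GL(2,\C)$ must have all its eigenvalues on the unit circle: an eigenvalue of modulus $>1$ would make $\|A^n\|$ diverge as $n \to +\infty$, and one of modulus $<1$ would make $\|A^{-n}\|$ diverge. Therefore $|\lambda_i/\gamma_i| = 1$ for $i=1,2$, which yields the multiset equality $\{|\lambda_1|, |\lambda_2|\} = \{|\gamma_1|, |\gamma_2|\}$. This equality is preserved after re-ordering each pair by absolute value, giving the conclusion. I do not expect any serious obstacle here, as Lemma \ref{lemma:generated-group} supplies the essential dynamical content; the only delicate point is reconciling the ordering used in the triangular form with the absolute-value ordering in the statement, and the multiset identity resolves this immediately.
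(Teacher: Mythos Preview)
Your argument is correct and follows essentially the same route as the paper: both use the compactness of the closed group generated by $\Lambda_f\circ\Lambda_g^{-1}$ (Lemma~\ref{lemma:generated-group}) to force the diagonal ratios $\lambda_i\gamma_i^{-1}$ to lie on the unit circle. Your framing via the derivative homomorphism $H\to\GL(2,\C)$ and bi-infinite boundedness of $\{A^n\}$ is a clean repackaging of the paper's cluster-point computation; the appeal to Lemma~\ref{lemma:lambda-gamma}(1) is actually unnecessary since triangular maps have lower-triangular derivatives by definition, but this does no harm.
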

\begin{proof}
From the above lemma the sequence $\{ \lambda_i^n \gamma_i ^{-n}\}$ is bounded. Let $\xi$ be one of its cluster points and $n_j \nearrow + \infty$ such that $\lambda_i^{n_j} \gamma_i ^{-n_j} \to \xi$. Since $\xi^k$, $k = \pm  1, \pm 2, \ldots$ are also cluster points and they should be bounded we must have $|\xi| = 1$. We thus have $| \lambda_i^{n_j} \gamma_i ^{-n_j} | \to 1$ which is possible only if $|\lambda_i|=|\gamma_i|$.
\end{proof}

\begin{lemma} \label{lemma:mgeq2}
Let $m$ be the maximal integer such that $\mu$ is $m$-laminated in some open set of $\pr^2$. Then $m \geq 2$.
\end{lemma}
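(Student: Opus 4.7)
The plan is to apply Lemma~\ref{lemma:eremenko} twice. First, by Lemma~\ref{lemma:generated-group} the group $H = \overline{\langle \Lambda_f \circ \Lambda_g^{-1}\rangle}$ is a compact, positive-dimensional abelian Lie group preserving $\mu^*$. Let $V$ be the infinitesimal generator of a non-trivial one-parameter subgroup of $H$; this is a real-analytic vector field on $\C^2$ whose flow preserves $\mu^*$ and which vanishes only at $0$. Since $\varphi$ is a local biholomorphism at $0$ by Proposition~\ref{prop:poincare}, the pushforward $W = \varphi_* V$ is a real-analytic vector field near $a = \varphi(0)$, non-vanishing off $a$, whose flow preserves $\mu$. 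Applying Lemma~\ref{lemma:eremenko} with $m = 0$ gives that $\mu$ is $1$-laminated at every point of a punctured neighborhood of $a$.

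To pass from $m = 1$ to $m = 2$ I would introduce a second symmetry of $\mu$ coming from a different common repelling periodic point. By Lemma~\ref{lemma:common-periodic} there exists such a point $a' \neq a$ on $J_2$, and after replacing $f$ and $g$ by a common iterate we may assume $a'$ is a common fixed point. Repeating the Poincar\'e construction at $a'$ yields a Poincar\'e map $\varphi'$, commuting triangular lifts, a compact positive-dimensional group $H'$ preserving $(\varphi')^*\mu$, and a vector field $V'$ on $\C^2$ generating a one-parameter subgroup of $H'$. Pushing $V'$ through $\varphi'$ at $0$ produces a vector field $W'$ near $a'$ whose flow preserves $\mu$ and which vanishes at $a'$. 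On the other hand, any preimage $q' \in \varphi^{-1}(a')$ is nonzero since $\varphi(0) = a \neq a'$, so $V(q') \neq 0$; pushing $V$ through the corresponding local branch of $\varphi^{-1}$ near $a'$ produces a vector field $W$ near $a'$ whose flow preserves $\mu$ and which satisfies $W(a') \neq 0$.

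Near $a'$ we thus have two $\mu$-preserving vector fields $W$ and $W'$ with $W(a') \neq 0$ and $W'(a') = 0$. At a point $p$ close to $a'$ at which $W(p)$ and $W'(p)$ are $\R$-linearly independent, Lemma~\ref{lemma:eremenko} applied first with $W$ (starting from $m = 0$) and then with $W'$ (transversal to the $W$-direction) yields that $\mu$ is $2$-laminated at $p$, establishing $m \geq 2$. The main obstacle is to produce such a point $p$: one must rule out that $W' = f W$ for a real-analytic scalar $f$ throughout a neighborhood of $a'$, which would force the integral curves of $W$ and $W'$ to coincide. But the $W'$-orbit through any point close to $a'$ shrinks to the singleton $\{a'\}$ in the limit, while the $W$-orbit converges to the non-degenerate $W$-orbit through $a'$; this incompatibility shows that some point of transversality must exist near $a'$.
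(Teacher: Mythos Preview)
Your overall strategy matches the paper's: produce two local $\mu$-preserving flows from Poincar\'e data at two distinct common repelling periodic points and argue they are transversal somewhere. However, two steps are unjustified. First, the assertion that $V$ ``vanishes only at $0$'' can fail. In the diagonal case the generator of a one-parameter subgroup of $H$ has the form $V = i\alpha\, z_1\partial_{z_1} + i\beta\, z_2\partial_{z_2}$, and when $\dim H = 1$ the pair $(\alpha,\beta)$ is forced up to scale; nothing prevents one of them from being zero. In fact the paper's own analysis under the contradiction hypothesis $m=1$ eventually derives $\lambda_2=\gamma_2$, so the identity component of $H$ is exactly $\Lambda^t(z_1,z_2)=(e^{it}z_1,z_2)$, whose generator vanishes on the whole axis $\{z_1=0\}$. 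If $q'$ happens to lie on that axis then $W(a')=0$ and your dichotomy collapses; choosing $a'$ off $\varphi(\{V=0\})$ requires an argument you have not supplied. Second, even granting $W(a')\neq 0$ and $W'(a')=0$, the orbit-shrinking heuristic does not by itself exclude $W' = \psi\, W$ for some real-analytic scalar $\psi$ with $\psi(a')=0$: the $W'$-flow orbit of $p$ would then be an arc inside the $W$-circle $C_p$, and an arc can shrink to $\{a'\}$ while $C_p$ does not. What is missing is the observation that the closure of a one-parameter orbit of a compact group is a torus orbit, hence homogeneous, so it cannot be a proper arc of $C_p$; only this forces the alternative $W'\equiv 0$ near $a'$ and yields the contradiction.

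The paper proceeds more structurally and thereby sidesteps the first difficulty. It assumes $m=1$ from the outset and shows that the one-dimensional leaf $C_0$ through $0$ is invariant under $\Lambda_f$ and $\Lambda_g$; this forces $\lambda_2,\gamma_2\in\R$, and combined with Corollary~\ref{cor:abs-values} and a short resonance analysis it pins down both lifts as diagonal with $\lambda_2=\gamma_2$, so that the group is explicitly $(e^{it}z_1,z_2)$. Only after this normalization does the paper pass to a second periodic point $a'\in C_0$ and repeat the argument there, obtaining a second circle action of the same explicit type; transversality is then read off from the concrete form of the two rotations rather than from an abstract orbit-size comparison.
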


\begin{proof}
Pick a common fixed point $a \in J_2$ which is repelling for $f$. Consider the associated Poincaré map $\varphi$ and the triangular lifts $\Lambda_f$, $\Lambda_g$. By Lemma \ref{lemma:generated-group}  there is a $1$-parameter group of automorphisms preserving $\mu^*$ and $J^*$. This shows that the set $J^*$ and the measure $\mu^*$ are at least $1$-laminated. By pushing the lamination forward by $\varphi$ we see that $J$ and $\mu$ are at least $1$-laminated outside the set of critical values of $\varphi$.

Let $m$ be as in the statement of the lemma. Notice that each leaf of the $m$-lamination of $J^*$ is  invariant by the Lie group $\mathcal G$ generated by $\Lambda_f \circ \Lambda_g ^{-1}$, where $\Lambda_f$ and $\Lambda_g$ are the triangular lifts of $f$ and $g$ at some common repelling periodic point. Indeed, if the orbit under $\mathcal G$ of some point in $J^*$ connects two different leaves, the fact that $\mathcal G$ is positive dimensional together with Lemma \ref{lemma:eremenko} would allow us to produce an $(m+1)$-lamination of $\mu$ in some open set, contradicting the maximality of $m$.

Let us show that $m \geq 2$. Suppose that $m=1$.  Using Lemma \ref{lemma:common-periodic} and the density of repelling periodic points in $J_2$ we may suppose, after replacing $f$ and $g$ by some iterates, that the leaf of $J_2$ passing through $a$ is smooth. We claim that $\Lambda_f$ and $\Lambda_g$ are diagonal. If none of them is resonant the result follows from Sternberg's Theorem, so we may suppose that $\Lambda_f$ is resonant.

 By assumption, there is a real analytic curve $C_0$ contained in $J^*$ passing through $0$. Notice that $C_0$ is invariant by $\Lambda_f$ and $\Lambda_g$. Indeed, if this were not the case then the image of  $C_0$ by $\Lambda_f$ would cut the nearby leaves transversally, but then using the invariance of $\mu^*$ by $\Lambda_f$ we would get a $2$-lamination of $\mu^*$ at the intersection point, contradicting the maximality of $m$. The same reasoning applies to $\Lambda_g$.
 
 After a linear change of coordinates that does not change the triangular form of $\Lambda_f$ and $\Lambda_g$ we may assume that $L =T_0 C_0 = \{ z_1 =  0, \im z_2 =0 \}$. Since $C_0$ is invariant by $\Lambda_f$ and $\Lambda_g$ we get that $L$ by is invariant by $\Lambda_f'(0)$ and $\Lambda_g'(0)$. In particular $\lambda_2$ and $\gamma_2$ are real. By Corollary \ref{cor:abs-values} we get $\lambda_2 = \pm \gamma_2$. Replacing  $\Lambda_f$ and $\Lambda_g$ by $\Lambda^2_f$ and $\Lambda^2_g$ if necessary we may assume that  $\lambda_2 = \gamma_2$.

If $\Lambda_f$ and $\Lambda_g$ are both resonant, the relation $|\lambda_1| = |\gamma_1|$ shows that they have the same resonance index, so $\lambda_1^\ell = \lambda_2 = \gamma_2 = \gamma_1^\ell$ for some $\ell \in \N$. Replacing  $\Lambda_f$ and $\Lambda_g$ by their $\ell$-th iterate we may assume that  $\lambda_1 = \gamma_1$, so they have the form $\Lambda_f (z_1,z_2) = (\lambda z_1, \lambda^\ell z_2 + \alpha z_1^\ell)$ and $\Lambda_g (z_1,z_2) = (\lambda z_1, \lambda^\ell z_2 + \beta z_1^\ell)$. In this case $\Lambda_f \circ \Lambda_g^{-1} = (z_1, z_2 + \lambda^{-\ell}(\alpha - \beta)z_1^\ell)$ generates an unbounded group unless $\alpha = \beta$ (see Lemma \ref{lemma:lambda-gamma}). But then  $\Lambda_f = \Lambda_g$, contradicting condition (\ref{eq:disjoint-iterates}). This contradiction shows that  $\Lambda_g$ cannot be resonant. By the commutativity of $\Lambda_f$ and $\Lambda_g$  we get that $\Lambda_f$ is diagonal.

By the above discussion $\Lambda_f$ and $\Lambda_g$ are diagonal with $|\lambda_1| = |\gamma_1|$ and $\lambda_2 = \gamma_2$, so the automorphism group generated by  $\Lambda_f \circ  \Lambda_g^{-1}$ must be of the type $\Lambda^t(z_1,z_2) = (e^{it}z_1,z_2)$. If we choose a different repelling periodic point $a'  \in J_2 $ in $C_0$ and consider the corresponding Poincaré map $\varphi'$ and the triangular lifts $\Lambda'_f,\Lambda'_g$ we can repeat the above arguments and get another group $(\Lambda')^t$ of  the type above, whose orbits will be transversal to the orbits of $\Lambda^t$. This will produce a $2$-lamination of $\mu^*$ at some point, contradicting the maximality of $m$.
\end{proof}

\begin{lemma} \label{lemma:S0totreal}
Let $m$ be the maximal integer such that $\mu$ is $m$-laminated in some open set $\Omega$ of $\pr^2$ and suppose that $m=2$. Then, after reducing $\Omega$ if necessary, all the leaves of the lamination are totally real analytic surfaces.
\end{lemma}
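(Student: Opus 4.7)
I will argue by contradiction. Suppose that there is a leaf $L$ of the $2$-lamination on $\Omega$ and a point $b \in L$ with $T_b L$ a complex line in $T_b \pr^2$. Consider the set of complex tangencies
\begin{equation*}
CT = \{ p \in \Omega \cap \supp \mu : T_p L_p \text{ is a complex line in } T_p \pr^2 \},
\end{equation*}
where $L_p$ is the leaf through $p$. Because each leaf is real analytic and the lamination is real analytic in charts coming from Poincar\'e linearizations, $CT$ is locally a real-analytic subset of $\Omega$.

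If $CT$ has empty interior in $\Omega$, then I replace $\Omega$ by a small open subset disjoint from $CT$; on this reduced $\Omega$ each leaf is totally real at every point, and the lemma is proved. Otherwise $CT$ contains a nonempty open subset $\Omega' \subset \Omega$. In that case every leaf $L$ meeting $\Omega'$ has $T_p L$ complex at every point of $L \cap \Omega'$, and by real analyticity of $L$ this propagates to every point of $L$. Since a $2$-real-dimensional real-analytic submanifold of $\pr^2$ whose tangent space is everywhere invariant under the complex structure $J$ is automatically a complex submanifold, each such leaf is a holomorphic disc, and the lamination in $\Omega'$ becomes a real-analytic foliation by holomorphic curves.

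I now aim to show that this configuration is impossible. By density of common repelling periodic points in $J_2$ (Lemma \ref{lemma:common-periodic} and \cite{briend-duval:liapounoff}), I pick such a point $a$ on one of these holomorphic leaves in $\Omega'$; after passing to suitable iterates, $a$ is a common repelling fixed point of $f$ and $g$. The leaf through $a$ is then a holomorphic disc contained in $J_2$ and passing through the repelling periodic point $a$, so Proposition \ref{prop:J1=J2} yields $J_1 = J_2$. In particular $\supp T = J_2$, and on $\Omega'$ the support of $T$ is contained in the holomorphic foliation.

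To close the argument I would prove that this forces $T \wedge T = 0$ on $\Omega'$, contradicting the nontriviality of the $2$-laminated measure $\mu = T \wedge T$ on $\Omega'$. Heuristically: using the continuity of the local plurisubharmonic potential $u$ of $T$ and slicing theory with respect to the holomorphic foliation, one writes locally $T = \int [L_v]\,d\sigma(v)$ for a positive transverse measure $\sigma$; the potential $u$ then depends essentially only on the transverse complex coordinate $v$, so its restriction to each leaf $L_{v_0}$ is pluriharmonic (even constant). This gives $(dd^c u)|_{L_{v_0}} = 0$ and, integrating, $T \wedge T = 0$ on $\Omega'$, the desired contradiction.

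\emph{Main obstacle.} The principal technical hurdle is the last step: verifying that a positive closed $(1,1)$-current with continuous potential that is concentrated on a real-analytic foliation by holomorphic curves must indeed decompose as above and satisfy $T \wedge T = 0$. This requires a careful use of slicing for positive closed $(1,1)$-currents transversal to the foliation, and of the Bedford--Taylor wedge product for currents with continuous potentials. A minor additional subtlety is to ensure that, after possibly shrinking $\Omega'$, the real-analytic foliation is given by holomorphic charts so that the slicing and decomposition can be carried out in the complex-analytic category.
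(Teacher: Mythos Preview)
Your dichotomy is broken. You define $CT \subset \supp \mu \cap \Omega$ and then split into ``$CT$ has empty interior in $\Omega$'' versus ``$CT$ contains an open $\Omega' \subset \Omega$''. But $\supp \mu$ itself typically has empty interior in $\pr^2$, so the second branch forces $\Omega' \subset \supp\mu$, an extra hypothesis you never justify. Worse, the first branch does not let you shrink: if every leaf happens to be a complex curve, then $CT = \supp\mu \cap \Omega$, which still has empty interior in $\Omega$, yet any smaller open set meeting the lamination meets $CT$. So the main case of the paper---all leaves are complex curves while $\supp\mu$ is thin---falls through the cracks of your argument. (Relatedly, $CT$ is not a real-analytic subset of $\Omega$: on each leaf the complex-tangency locus is real analytic, but the union over the transverse closed set $K$ need not be.) The correct dichotomy is simply: either some leaf has a totally real point, and then by continuity you shrink $\Omega$; or every leaf is a complex curve.

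In that second case, the step you label a ``minor additional subtlety'' is in fact the whole proof. A real-analytic lamination of a closed set by holomorphic discs does not come for free with an ambient holomorphic foliation or with holomorphic transverse charts, so neither the slicing decomposition $T=\int[L_v]\,d\sigma(v)$ nor the directed-current identity $T\wedge T=0$ is available without further work. The paper supplies exactly this missing structure: it uses the compact Lie group generated by $\Lambda_f\circ\Lambda_g^{-1}$ to classify the $\mathcal G$-invariant complex curves and force the leaves to be the level sets $\{z_2=c\}$ in the Poincar\'e chart (ruling out the resonant and ``diagonal rotation'' cases via Lemma~\ref{lemma:eremenko} and maximality of $m$). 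This embeds the lamination in the genuine holomorphic foliation $\{z_2=\text{const}\}$ of $\C^2$, which is then pushed down to $\pr^2\setminus A$ using the groupoid of deck transformations of $\varphi$; only then can one invoke the ``directed current has vanishing self-intersection'' lemma (inside Lemma~\ref{lemma:no-hol-fol}) to reach a contradiction. Your outline skips precisely this use of the commuting pair, and without it the $T\wedge T=0$ step does not go through.
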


\begin{proof}
Pick a common repelling periodic point $a \in J_2$ at which the leaf passing through it is smooth (we can do so using by Lemma \ref{lemma:common-periodic} and the density of repelling periodic points in $J_2$). After replacing $f$ and $g$ by some iterate we may suppose that $a$ is a common fixed point. Let $\varphi:\C^2 \to \pr^2$ be the associated Poincaré map, $\Lambda_f$ and $\Lambda_g$ the corresponding triangular lifts and $J^* = \varphi^{-1} (J_2)$. Denote by $S_c$ the leaves of the lamination of $J^*$. 

If there is one leaf that is totally real then, by continuity, the nearby leaves are also totally real (it may as well happen that there is only one leaf). In this case we can shrink $\Omega$ so it contains only totally real leaves.

Suppose by contradiction that all the leaves are complex curves and denote by $S_0$ the leaf of $J^*$ through the origin in $\C^2$. Notice that there is no open subset $U$ of $\C^2$ containing only one leaf, otherwise $J^* \cap U$ would be pluripolar. Therefore there infinitely many leaves accumulating around $S_0$.

If $\Lambda_f$ and $\Lambda_g$ are both resonant,  the relations $|\lambda_i| = |\gamma_i |$, $i=1,2$ imply  that they have the same resonance index, say $\ell$. Writing $\Lambda_1(z_1,z_2) = (\lambda_1 z_1, \lambda_1^\ell z_2 + \alpha z_1^ \ell)$ and $\Lambda_2(z_1,z_2) = (\gamma_1 z_1, \gamma_1^\ell z_2 + \beta z_1^ \ell)$, Lemma \ref{lemma:lambda-gamma} shows that  $\Lambda_f \circ \Lambda_g ^{-1}$ generate an unbounded group unless $\alpha = \beta \xi^\ell$, where $\xi = \lambda_1 \gamma_1^{-1}$ and in this case the Lie group they generate contains a $1$-parameter subgroup of the form $\Lambda^t(z_1,z_2) = (e^{it}z_1,e^{i \ell t} z_2)$. It is easy to see, by looking at the equation defining them, that the only complex curves invariant by this group are the axes $\{z_1 = 0\}$,  $\{z_2 = 0\}$ and $z_2 = \kappa z_1^\ell$, $\kappa \in \C$. Since all these curves pass through $0$, the leaves $S_c$ with $c \neq 0$ small must cut them transversally, which is not possible by Lemma \ref{lemma:eremenko} and the maximality of $m$.

The above discussion shows that one of $\Lambda_f$ or $\Lambda_g$ is non-resonant. The commutativity of $\Lambda_f$ and $\Lambda_g$ implies that they are both diagonal. The Lie group generated by $\Lambda_f \circ \Lambda_g^{-1}$ contains then a $1$-parameter subgroup of one of the following types (i) $\Lambda^t(z_1,z_2) = (e^{it}z_1, z_2)$, (ii) $\Lambda^t(z_1,z_2) = (z_1, e^{it} z_2)$ or (iii) $\Lambda^t(z_1,z_2) = (e^{ipt}z_1, e^{iqt} z_2)$ $p,q, \in \N$ or the $2$-dimensional group $\Lambda^{s,t}(z_1,z_2) = (e^{is}z_1, e^{it}z_2)$. 

It is clear that there is no complex curve invariant by $\Lambda^{s,t}$, so case (iv) cannot occur. For the group in case (iii) the only invariant complex curves are $z_2^p = \kappa z_1^q$, $\kappa \in \C$ and the coordinate axes. Since they all pass through $0$ we argue as above to see that this contradicts the maximality of $m$, so case (iii) cannot occur either. For the group in case (i) it is easy to see that the only invariant holomorphic curves are of the form $\{z_2 = c\}$ for some constant $c$ and similarly for the case (ii), replacing $z_2$ by $z_1$. We conclude that, up to permuting $z_1$ and $z_2$ we have $S_c = \{z_2 = c\}$, so
\begin{equation} \label{eq:foliated-julia}
J^* = \bigcup_{c \in K} \{z_2 = c\}
\end{equation}
for some closed set of parameters $K \subset \C$.

Consider now the groupoid
\begin{equation*}
\mathcal A = \left \{ \tau : \,  \begin{split}  &\tau: W \to \tau(W) \text{ is a local biholomorphism} \\ &\text{ of }\C^2 \text{ such that } \varphi\circ \tau = \varphi \text{ and } W \cap J^* \neq \varnothing \end{split} \right \} .
\end{equation*}
Notice that the elements $\tau \in \mathcal A$ preserve $\mu^*$ because  $\tau^* \mu^* = \tau^* \varphi ^* \mu = \varphi ^* \mu= \mu^*$. This implies that they also preserve the leaves of $J^*$. Indeed, suppose that there is a $c \in K$ such that $\tau(S_c)$ is different from any other $S_{c'}$, $c' \in K$. Since there is no isolated leaf we can see that $\tau(S_c)$ cuts a generic leaf transversally, but then Lemma \ref{lemma:eremenko} would allow us to produce a 3-lamination of $\tau^* \mu^* = \mu^*$ on an open set, contradicting the maximality of $m$.

Denote by $\mathcal F^*$ the foliation $\{z_2 = constant.\}$. We claim that an element $\tau: W \to \tau(W)$ in $\mathcal A$ takes a leaf of $\mathcal F^*$ intersecting $W$ to another leaf of $\mathcal F^*$. To see this we write $\tau(z_1,z_2) = (\tau_1(z_1,z_2),\tau_2(z_1,z_2) )$. From the fact that $\tau$ preserves the leaves of $J^*$ we get that $\frac{\del \tau}{ \del z_1}$ vanishes over $J^* \cap W$ so it must be identically zero on $W$ by the identity principle (recall that $K$ has no isolated points). This shows that $\tau_2$ is independent of $z_1$, proving the claim.

Let $A \subset \pr^2$ be the complement of the image of the regular values of $\varphi$. This is a pluripolar set containing $\mathcal E$ and the above discussion shows that $\varphi$ induces a nonsingular holomorphic foliation $\mathcal F$ on a neighborhood of $J_2 \setminus A$: the leaf of $\mathcal F$ through $b$ is defined as the image by $\varphi$ of the leaf of $\mathcal F^*$  through $w$, where $w$ is any regular value in $\varphi^{-1}(b)$. From (\ref{eq:foliated-julia}) we have that, outside $A$, $J_2$ is locally given by a union of leaves of $\mathcal F$ and that these leaves are images of $\C$. Lemma \ref{lemma:no-hol-fol} below gives a contradiction.
\end{proof}

 Let $\mathcal F$ be a non-singular foliation by Riemann surfaces on a complex surface $X$. We say that a $(1,1)$-current $S$ on $X$ is directed by $\mathcal F$ if $S \wedge \gamma = 0$, where $\gamma$ is a local holomorphic $(1,0)$-form whose kernel defines $\mathcal F$. The following result is well known, see for instance \cite{dujardin-guedj:maximal}.
 
 \begin{lemma}
 Let $S$ be a  $(1,1)$-current with continuous local potentials on a complex surface. If $S$ is directed by a non-singular foliation then $S \wedge S = 0$.
 \end{lemma}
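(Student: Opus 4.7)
The claim is local, so I fix a point $p \in X$ and choose holomorphic coordinates $(z_1,z_2)$ centered at $p$ in which $\mathcal F$ is the horizontal foliation $\{z_2 = \mathrm{const}\}$ with defining form $\gamma = dz_2$. On a small polydisc around $p$ I write $S = \ddc u$ for a continuous plurisubharmonic function $u$.

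The first step is to pin down the algebraic form of $S$. Expanding $S = \sum_{i,j} S_{i\bar j} \, \tfrac{i}{2} dz_i \wedge d\bar z_j$ and computing $S \wedge dz_2$ in the basis of $(2,1)$-forms, the vanishing $S \wedge dz_2 = 0$ forces $S_{1 \bar 1} = S_{1 \bar 2} = 0$ as distributions. Since $S$ is real, $S_{2 \bar 1} = \overline{S_{1 \bar 2}}$ also vanishes, and hence $S = \sigma \cdot \tfrac{i}{2} dz_2 \wedge d\bar z_2$ for some positive measure $\sigma$. Equivalently, the continuous potential $u$ is pluriharmonic along the fibers of the projection $(z_1,z_2) \mapsto z_2$.

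The second step is a regularization argument. Let $u_\epsilon := u * \rho_\epsilon$ be the convolution with a standard approximation of the identity; these are smooth plurisubharmonic functions tending to $u$ locally uniformly. Because $dz_2$ is a constant-coefficient form in these coordinates, convolution commutes with wedging by $dz_2$, so $S_\epsilon := \ddc u_\epsilon = S * \rho_\epsilon$ still satisfies $S_\epsilon \wedge dz_2 = 0$. Applying the first step to the smooth form $S_\epsilon$ yields $S_\epsilon = \sigma_\epsilon \cdot \tfrac{i}{2} dz_2 \wedge d\bar z_2$ with $\sigma_\epsilon$ a smooth function, and since $(dz_2 \wedge d\bar z_2) \wedge (dz_2 \wedge d\bar z_2) = 0$ we obtain the pointwise identity $S_\epsilon \wedge S_\epsilon \equiv 0$.

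To finish, I invoke the Bedford-Taylor continuity of the Monge-Amp\`ere operator for continuous potentials: since $u_\epsilon \to u$ locally uniformly, $(\ddc u_\epsilon)^2 \to (\ddc u)^2$ weakly as currents, so $S \wedge S = \lim_{\epsilon \to 0} S_\epsilon \wedge S_\epsilon = 0$. The only delicate point is that the directed structure must be preserved by regularization, which works precisely because the foliation has been flattened out by our choice of coordinates; this flattening is always available locally for a nonsingular holomorphic foliation.
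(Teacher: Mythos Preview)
Your proof is correct. The paper does not actually supply a proof of this lemma; it merely states the result as well known and refers to \cite{dujardin-guedj:maximal}. Your argument is a standard and clean way to establish it: straighten the foliation locally so that $\gamma = dz_2$, read off from $S \wedge dz_2 = 0$ that only the $dz_2 \wedge d\bar z_2$ component of $S$ survives, regularize (which preserves directedness because $dz_2$ has constant coefficients), observe that $S_\epsilon \wedge S_\epsilon = 0$ trivially for smooth forms of this shape, and pass to the limit via Bedford--Taylor continuity under locally uniform convergence of potentials. Each step is sound; in particular your remark that regularization respects the directed structure only because the foliation has been flattened is exactly the point one must be careful about, and you handle it correctly.
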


 


\begin{lemma} \label{lemma:no-hol-fol}
Let $f:\pr^2 \to \pr^2$ be a holomorphic endomorphism of degree $d \geq 2$. Suppose that there is a pluripolar set $A$ and a nonsingular $f$-invariant  holomorphic foliation $\mathcal F$ on an open neighborhood $\mathcal W$ of $J_2 \setminus A$ such that, outside $A$,  $J_2$ is locally a union of leaves of $\mathcal F$. Then there is no entire curve $\phi:\C \to J_2$ whose image is contained in a leaf of $\mathcal F$.
\end{lemma}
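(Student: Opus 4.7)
The plan is to assume such an entire curve $\phi:\C \to J_2$ exists and derive a contradiction by showing that the Green current $T$ itself becomes directed by $\mathcal F$ on $\mathcal W$, which via the preceding lemma forces the Green measure $\mu = T\wedge T$ to vanish.

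First I would feed $\phi$ into Theorem \ref{thm:ahlfors-construction} to obtain an Ahlfors current: a positive closed $(1,1)$-current $R$ of mass one supported in $\overline{\phi(\C)} \subset J_2$, realized as the limit of $R_n = \phi_*[D_{r_n}]/\|\phi_*[D_{r_n}]\|$. Since $\phi(\C)$ is contained in a single leaf $L$ of $\mathcal F$, each approximating current $R_n$ is supported on a piece of $L$. If $\gamma$ is a local holomorphic $(1,0)$-form defining $\mathcal F$ on an open set $U \subset \mathcal W$, then $\gamma$ pulls back to zero on $L$, giving $R_n \wedge \gamma = 0$ on $U$. Passing to the limit yields $R \wedge \gamma = 0$, i.e.\ $R|_{\mathcal W}$ is directed by $\mathcal F$.

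Next I would average as in Lemma \ref{lemma:cesaro}: the Cesaro sums $S_n = \tfrac{1}{n}\sum_{j=0}^{n-1} d^{-j}(f^j)^* R$ have a cluster value $S$ satisfying $f^*S = d\cdot S$, with $\|S\|=1$ and $\supp S \subset J_2$ (by total invariance of $J_2$). Proposition \ref{prop:currentJk} then forces $S = T$. To pass directedness through the iterates I would use that $\mathcal F$ is $f$-invariant: locally on $\mathcal W$ we have $f^*\gamma' = h\cdot\gamma$ for a holomorphic $h$ that is nonzero outside the critical set of $f$, so wherever $h\neq 0$, the relation $(f^*R)\wedge\gamma = h^{-1}\,f^*(R\wedge\gamma') = 0$ holds; the equality then extends across the critical set by continuity of the defining form and the usual density argument for positive closed currents. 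Iterating, every $d^{-j}(f^j)^*R$ is directed by $\mathcal F$ on $\mathcal W$, hence so is each $S_n$ and therefore their limit $T$.

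Finally I would invoke the preceding lemma: $T$ has Hölder continuous local potentials and is directed by $\mathcal F$ on $\mathcal W$, so $T\wedge T = 0$ on $\mathcal W$. But $\mu = T\wedge T$ is a probability measure supported on $J_2 \subset \mathcal W \cup A$, and $\mu$ charges no pluripolar set because $T$ has continuous local potentials. Consequently $\mu \equiv 0$, contradicting $\|\mu\|=1$.

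The step I expect to be the main obstacle is the propagation of directedness under iteration of $f$: the current $R$ is directed by $\mathcal F$ only on $\mathcal W$ (a neighborhood of $J_2\setminus A$), whereas the pullbacks $(f^j)^*R$ live on all of $\pr^2$, and one must argue carefully that at points of $\mathcal W$ whose image under $f^j$ falls in the pluripolar set $A$ (or on the critical locus of $f^j$) the directedness is not lost. The $f$-invariance of $\mathcal F$ together with the pluripolarity of $A$ and the fact that positive closed currents do not detect proper analytic sets of lower dimension should make this work, but it is the only part of the argument requiring nontrivial verification.
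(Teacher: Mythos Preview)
Your proposal is correct and follows essentially the same route as the paper: construct an Ahlfors current $R$ from $\phi$, average to a totally invariant current which by Proposition~\ref{prop:currentJk} equals $T$, note that directedness by $\mathcal F$ is preserved under pullback by the $f$-invariance of $\mathcal F$ and passes to the limit, and then use the preceding lemma to conclude $T\wedge T=0$ on $\mathcal W$, forcing $\mu$ to be supported on the pluripolar set $A$. The paper treats the propagation step you flag more tersely than you do, simply asserting it from $f$-invariance; your more careful discussion of the critical locus and of points mapping into $A$ is reasonable but not elaborated in the paper either.
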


\begin{proof}
Let $L$ be a leaf of $\mathcal F$ contained in $J_2$ and suppose that there is a non-constant holomorphic map $\phi:\C \to L$. By the Ahlfors construction there is a positive closed $(1,1)$-current $R$ supported on $\overline{\phi(\C)} \subset J_2$. Taking a subsequence of the Cesaro sums $S_n = \frac{1}{n} \sum_{j=1}^n d^{-j} (f^j)^*R$  we get a totally invariant current $S$ supported by $J_2$. By Proposition \ref{prop:currentJk} we have $S=T$. In particular $S$ has continuous potentials,  $S \wedge S$ is well-defined and does not charge pluripolar sets. 

Notice that, by construction, the current $R$ is directed by $\mathcal F$ on $\mathcal W$. From the invariance of $\mathcal F$ by $f$, the currents $S_n$ are also directed by $\mathcal F$ on  $\mathcal W$, and the same holds for the limit value $S$. In particular $S \wedge S = 0$  on $\mathcal W$, so $S \wedge S$ is supported by $A$. This gives a  contradiction, since $A$ is pluripolar.
\end{proof}

\begin{proof}[Proof of Proposition \ref{prop:lamination}] 
We know from Lemma \ref{lemma:mgeq2} that $m \geq 2$. Suppose that $m=2$. We keep the notation as in the proof of Lemma \ref{lemma:S0totreal} and denote by $S_0$ be the leaf through $0$. By Lemma \ref{lemma:S0totreal}, $S_0$ is a totally real surface. Using Lemma \ref{lemma:eremenko}, the fact that $\Lambda_f$ and $\Lambda_g$ fix the origin and the maximality of $m$ we see that $S_0$ is invariant by $\Lambda_f$ and $\Lambda_g$. After a linear change of coordinates that does not change the triangular form of $\Lambda_f$ and $\Lambda_g$ we may assume that $H =T_0 S_0 = \{\im z_1 =  \im z_2 =0 \}$. By the invariance of $S_0$, $H$ is invariant by $\Lambda_f'(0)$ and $\Lambda_g'(0)$. In particular the eigenvalues $\lambda_i,\gamma_i$, $i=1,2$ are real. From Corollary \ref{cor:abs-values} we have $\gamma_i = \pm\lambda_i$. In particular $\Lambda_f^2 \circ \Lambda_g ^{-2} (z_1,z_2) = (z_1,z_2 + \delta z_1^\ell)$ cannot generate a non-trivial compact group unless $\delta = 0$ (Lemma \ref{lemma:lambda-gamma}). In this case we have $\Lambda_f^2 = \Lambda_g^2$, which cannot happen in view of condition (\ref{eq:disjoint-iterates}). This contradiction shows that $m \geq 3$.
\end{proof}

\section{Laminations by hypersurfaces} \label{sec:m=3}

In this section we will describe the commuting pairs of endomorphisms of $\pr^2$ such that the corresponding Green measure is $3$-laminated. We will see later (Theorem \ref{thm:3-laminated-polynomial}) that the maps in question are both polynomial after some number of iterations. For this reason we begin by analyzing the polynomial case.

\subsection{Polynomial case}
Let $f,g: \pr^2 \to \pr^2$ be a commuting pair of holomorphic endomorphisms of same degree $d_f = d_g = d  \geq 2$ satisfying (\ref{eq:disjoint-iterates}).  As before we denote by $\varphi: \C^2 \to \C^2$ the Poincaré map associated with a common repelling fixed point $a \in J_2$ and $\Lambda_f, \Lambda_g$ the corresponding triangular lifts.

\begin{lemma} \label{lemma:circle-group}
Let $m$ be the maximal integer such that $\mu$ is $m$-laminated in some open set of $\pr^2$. Suppose that $m=3$. Then
\begin{enumerate}
\item The maps  $\Lambda_f$ and $\Lambda_g$ are linear and diagonal. In particular, the axes $\{z_1 = 0\}$ and $\{z_2 = 0\}$ are invariant.
\item The group $\mathcal G$ generated by $\Lambda_f \circ \Lambda_g^{-1}$ is one-dimensional and after a linear change of coordinates of $\C^2$ the connected component of the identity $\mathcal G_0$ contains a subgroup of the form $\Lambda^t = \bigl( \begin{smallmatrix} e^{it} & 0 \\ 0 & 1 \end{smallmatrix} \bigr)$.
\end{enumerate}
\end{lemma}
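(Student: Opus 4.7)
The plan is to exploit the fact that $0$ is a fixed point of $\mathcal G_0$: the tangent space at $0$ of the smooth $3$-dimensional leaf of the lamination through $0$ must be a real $3$-dimensional subspace of $\C^2$ invariant under $d\Lambda^t(0)$ for every $\Lambda^t \in \mathcal G_0$, and this linear-algebraic constraint is incompatible with simultaneous nontrivial rotations in both coordinates. To make this work I first need the leaf through $0$ to be smooth: exactly as at the start of the proof of Lemma \ref{lemma:S0totreal}, using Lemma \ref{lemma:common-periodic} together with the density of repelling periodic points of $f$ in $J_2$, I choose (after replacing $f$ and $g$ by iterates) a common repelling fixed point $a$ inside the open set where $\mu$ is $3$-laminated and at which the leaf through $a$ is smooth. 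Since $\varphi$ is a local biholomorphism near $0$ (Proposition \ref{prop:poincare}), $0$ lies in an open neighborhood where $J^*$ is $3$-laminated, and the leaf $L$ through $0$ is a smooth $3$-dimensional real submanifold. By Lemma \ref{lemma:eremenko} and the maximality of $m = 3$, the infinitesimal generators of $\mathcal G_0$ are tangent to the leaves, and by connectedness of $\mathcal G_0$ each leaf is $\mathcal G_0$-invariant, so $T_0 L$ is $d\Lambda^t(0)$-invariant for every $\Lambda^t \in \mathcal G_0$.

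For (1), assume toward a contradiction that $\Lambda_f$ is non-diagonal, so $\Lambda_f(z) = (\lambda_1 z_1, \lambda_1^\ell z_2 + \alpha z_1^\ell)$ with $\alpha \neq 0$. Lemma \ref{lemma:lambda-gamma}(1) forces $\Lambda_g$ to be $\ell$-resonant as well, and Lemma \ref{lemma:lambda-gamma}(2) combined with the boundedness of $\{(\Lambda_f \circ \Lambda_g^{-1})^n\}$ from Lemma \ref{lemma:generated-group} yields $\alpha = \beta \xi^\ell$ with $\xi = \lambda_1 \gamma_1^{-1}$. Then $\Lambda_f \circ \Lambda_g^{-1} = \mathrm{diag}(\xi, \xi^\ell)$ with $|\xi| = 1$, and $\xi$ is not a root of unity (else $\mathcal G$ would be finite, violating Lemma \ref{lemma:generated-group}), so $\mathcal G_0 = \{\mathrm{diag}(e^{it}, e^{i\ell t}) : t \in \R\}$. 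This action rotates the $z_1$-plane at rate $1$ and the $z_2$-plane at rate $\ell \geq 1$, and a direct check shows the only real linear subspaces of $\C^2$ simultaneously invariant for all $t$ are $\{0\}$, $\{z_1 = 0\}$, $\{z_2 = 0\}$, and $\C^2$, none of which is $3$-dimensional, contradicting the existence of $T_0 L$. Hence $\alpha = 0$ and symmetrically $\beta = 0$, so $\Lambda_f$ and $\Lambda_g$ are linear and diagonal.

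For (2), writing $\Lambda_f = \mathrm{diag}(\lambda_1, \lambda_2)$ and $\Lambda_g = \mathrm{diag}(\gamma_1, \gamma_2)$, Corollary \ref{cor:abs-values} gives $|\xi_i| = 1$ for $\xi_i = \lambda_i \gamma_i^{-1}$, so $\mathcal G$ is a closed positive-dimensional subgroup of the diagonal torus $T^2 = \{\mathrm{diag}(e^{is}, e^{it}) : s, t \in \R\}$. Thus $\mathcal G_0$ is either all of $T^2$ or a closed $1$-parameter subgroup $\{\mathrm{diag}(e^{ipt}, e^{iqt})\}$ for a primitive pair $(p, q) \in \Z^2$. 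The same tangent-space analysis as in (1) rules out the cases $\mathcal G_0 = T^2$ and $(p, q)$ with both $p \neq 0$ and $q \neq 0$, because in each of these cases the only real linear subspaces of $\C^2$ invariant under every element of $\mathcal G_0$ are $\{0\}$, the coordinate axes, and $\C^2$. Hence $p = 0$ or $q = 0$; after swapping $z_1 \leftrightarrow z_2$ if necessary (a linear change of coordinates preserving the diagonal form), we conclude that $\mathcal G$ is $1$-dimensional and $\mathcal G_0 \supset \{\mathrm{diag}(e^{it}, 1)\}$.

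The main obstacle is the setup step in the first paragraph: arranging for the common repelling fixed point $a$ to lie inside the $3$-laminated open set so that the leaf through $0$ is smooth with a well-defined tangent space. This density argument (Lemma \ref{lemma:common-periodic} together with the Briend--Duval density of repelling periodic points in $J_2$) is standard and already appears in Lemma \ref{lemma:S0totreal}; once it is in place, the rest of the proof reduces to a straightforward linear-algebraic check at the origin.
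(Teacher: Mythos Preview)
Your proof is correct and reaches the conclusion by a route that differs from the paper's in an instructive way. The paper first argues (as you do) that the leaf $S_0$ through $0$ is invariant, but then it uses invariance of $S_0$ under $\Lambda_f$ and $\Lambda_g$ \emph{themselves}, not merely under $\mathcal G_0$: after normalizing $T_0S_0=\{\im z_2=0\}$ it reads off that $\lambda_2,\gamma_2\in\R$, hence $\lambda_2=\pm\gamma_2$, and then refers back to the resonance bookkeeping in the proof of Lemma~\ref{lemma:mgeq2} to force diagonality and to identify $\mathcal G_0$. You instead keep $T_0L$ abstract and argue representation-theoretically: a positive-dimensional closed subgroup of the diagonal $2$-torus admits a $3$-dimensional real invariant subspace of $\C^2$ only when it rotates a single coordinate, which simultaneously rules out the non-diagonal (resonant) case and pins down $\mathcal G_0$. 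Your argument is cleaner and avoids the case-by-case analysis inherited from Lemma~\ref{lemma:mgeq2}. The paper's approach, on the other hand, yields $\lambda_2\in\R$ as a byproduct, and this is used in the proof of Lemma~\ref{lemma:disjoint-spheres} (where one needs $\Lambda:=\Lambda_f\circ\Lambda^t$ to have both diagonal entries real). This is easily recovered from your setup: once you know $\mathcal G_0=\{\mathrm{diag}(e^{it},1)\}$, the invariant $3$-planes are exactly $\C_{z_1}\oplus L'$ with $L'$ a real line in $\C_{z_2}$; rotating $z_2$ to make $L'=\R$ and then invoking invariance of $S_0$ under $\Lambda_f$ gives $\lambda_2\in\R$.
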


\begin{proof}

Let $S_0$ the leaf through $0$ coming from the lamination of $\mu^*$. It is a real analytic hypersurface and since $\Lambda_f$ and $\Lambda_g$ preserve $\mu^*$ up to a constant factor, $S_0$ is invariant by $\Lambda_f$ and $\Lambda_g$. Indeed, if $S_0$ is the only leaf then it coincides with the support of $\mu^*$, so it must be preserved by $\Lambda_f$ and $\Lambda_g$. If there is another leaf different from $S_0$, there are infinitely many leaves accumulating around $S_0$. In this case, if $\Lambda_f (S_0)$ or $\Lambda_g (S_0)$ is different from $S_0$ then they must cut the nearby leaves transversally, which is not possible by Lemma \ref{lemma:eremenko}.

After a linear change of coordinates that does not change the triangular form of $\Lambda_f$ and $\Lambda_g$ we may assume that $H =T_0 S_0 = \{ \im z_2 =0 \}$. Since $\Lambda_f$ and $\Lambda_g$ fix $0$ and leave $S_0$ invariant, their derivatives preserve $H$. This implies that  $\lambda_2$ and $\gamma_2$ are real. Since $|\lambda_2| = |\gamma_2|$ from Corollary \ref{cor:abs-values}, we have that $\lambda_2 = \pm \gamma_2$.  We  can now follow the arguments as in the the proof of Lemma  \ref{lemma:mgeq2} and conclude.
\end{proof}

The following result is implicitly contained in \cite{dinh:lattes}. 

\begin{theorem} [Dinh] \label{thm:dinh-lattes}
Let $f$ be a polynomial endomorphism of degree $d \geq 2$ of $\C^k$ that extends to a holomorphic endomorphism of $\pr^k$. Suppose that there is an open set $\Omega$ such that $J_k \cap \Omega$ is a real analytic hypersurface of $\Omega$. Then there is a repelling periodic point $a \in J_k \cap \Omega$ at which $J_k$ is strictly pseudoconvex and  there is a suitable coordinate system of $\C^k$ in which the Poincaré map associated with $a$ has the form $\varphi(z',z_k) = \nu_1(z') \exp(2 \pi i z_k)$ for some $\nu_1: \C^{k-1} \to \C^k$. Furthermore, the map $f$ is homogeneous and the restriction of $f$ to the hyperplane $\pr^k \setminus \C^k \simeq \pr^{k-1}$ is a Lattès map.  \end{theorem}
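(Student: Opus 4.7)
First, I would show that $J_k$ is strictly pseudoconvex at a generic repelling periodic point of $\Omega$. By Briend-Duval, repelling periodic points are dense in $J_k$, so $\Omega$ contains such points. Since $J_k \cap \Omega$ is a real analytic hypersurface, its Levi form is real analytic; if it were degenerate on an open subset, the classical structure theory of Levi-degenerate real analytic hypersurfaces would provide a local foliation by complex submanifolds of positive dimension, and by density at least one leaf would pass through a repelling periodic point, contradicting Proposition \ref{prop:complex-curves-J}. Hence one may choose $a \in J_k \cap \Omega$ repelling periodic at which $J_k$ is strictly pseudoconvex.

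Next I would study the Poincar\'e map at $a$. After replacing $f$ by an iterate, $a$ is fixed, and by Proposition \ref{prop:poincare} there is a Poincar\'e map $\varphi:\C^k\to\pr^k$ with $\varphi(0)=a$ and a triangular lift $\Lambda$ satisfying $f\circ\varphi=\varphi\circ\Lambda$. The preimage $J^*=\varphi^{-1}(J_k)$ is $\Lambda$-invariant and near $0$ coincides with a strictly pseudoconvex real analytic hypersurface $S_0$ through the origin. Writing $S_0 = \{\rho = 0\}$, invariance forces $\rho \circ \Lambda = \alpha\, \rho$ for some non-vanishing real analytic $\alpha$. Restricting the identity $\ddc(\rho\circ\Lambda) = \ddc(\alpha\rho)$ to $T_0^{1,0} S_0$ gives
\[L_0(\Lambda'(0) v, \overline{\Lambda'(0) w}) = \alpha(0)\, L_0(v, \bar w), \qquad v, w \in T_0^{1,0} S_0,\]
and combining this with $\Lambda'(0) = \mathrm{diag}(\lambda_1, \ldots, \lambda_k)$ and the invariance of the real hyperplane $T_0 S_0$ forces the normal eigenvalue $\lambda_k$ to be real and positive and $|\lambda_i|^2 = \lambda_k$ for $i = 1, \ldots, k-1$.

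The main obstacle is to promote this infinitesimal picture to a global normal form for $\varphi$ itself. The idea is that the $z_k$-coordinate should play the role of a logarithm of a radial variable: by strict pseudoconvexity of $S_0$, the dilating dynamics of $\Lambda$ in the normal direction, and a careful change of triangular coordinates on $\C^k$, the translation $(z', z_k) \mapsto (z', z_k + 1)$ should be a local symmetry of $\varphi$ near $0$, and the globally expanding dynamics of $\Lambda$ should then extend this to the factorization $\varphi(z', z_k) = \nu_1(z') \exp(2\pi i z_k)$ with $\nu_1 := \varphi|_{\{z_k = 0\}}$. The delicate point is to control the interaction between the triangular form of $\Lambda$, the hypersurface geometry of $S_0$, and the global dynamics of $f$; this is the heart of Dinh's argument in \cite{dinh:lattes}.

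Once the normal form is in hand, homogeneity and the Latt\`es conclusion follow quickly. The identity $\varphi(z', z_k + c) = e^{2\pi i c} \varphi(z', z_k)$ combined with $f \circ \varphi = \varphi \circ \Lambda$ and the fact that $\Lambda$ acts on the $z_k$-coordinate by multiplication by $\lambda_k$ yields $f(e^{2\pi i c} w) = e^{2\pi i \lambda_k c} f(w)$ on the dense set $\varphi(\C^k) = \pr^k \setminus \mathcal{E}$, whence by analytic continuation $f(\zeta w) = \zeta^{\lambda_k} f(w)$ for all $\zeta \in \C^*$. Since $f$ is polynomial of degree $d$ this forces $\lambda_k = d$ and $f$ to be homogeneous of degree $d$. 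Hence $f$ preserves $H_\infty \simeq \pr^{k-1}$, and $\varphi$ descends to $\tilde\varphi(z') = [\nu_1(z')]$ semiconjugating the restriction $\tilde\Lambda$ of $\Lambda$ to the first $k-1$ coordinates with $\hat f := f|_{H_\infty}$. The constraint $|\lambda_i| = \sqrt d$ for $i \leq k-1$ means the linear part of $\tilde\Lambda$ at $0$ equals $\sqrt d \cdot U$ with $U$ unitary; combining this with the local biholomorphicity of $\tilde\varphi$ at $0$ one deduces that the Green measure of $\hat f$ is absolutely continuous on an open set, and Theorem \ref{thm:m=4-lattes} yields that $\hat f$ is a Latt\`es map of $\pr^{k-1}$.
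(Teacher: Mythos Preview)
The paper does not give a proof of this theorem: it is stated as a result ``implicitly contained in \cite{dinh:lattes}'' and then used as a black box. There is therefore no argument in the present paper to compare your proposal against.

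Regarding your outline on its own merits: the first two steps are reasonable and parallel arguments the paper does carry out in the $k=2$ commuting setting (cf.\ Lemma~\ref{lemma:disjoint-spheres} for the eigenvalue relations, and Proposition~\ref{prop:complex-curves-J} used in the same way to exclude Levi-flatness). Your fourth step, deducing homogeneity and the Latt\`es property from the normal form, is essentially correct, though the last sentence (absolute continuity of the Green measure of $\hat f$) needs more than just the eigenvalue constraint on $\tilde\Lambda$. The genuine gap is your third step: you correctly identify the factorization $\varphi(z',z_k)=\nu_1(z')\exp(2\pi i z_k)$ as the crux, but then defer entirely to \cite{dinh:lattes} rather than proving it. Note also that your heuristic is slightly off --- the translation $(z',z_k)\mapsto(z',z_k+1)$ cannot be a ``local symmetry of $\varphi$ near $0$'', since $\varphi$ is a local biholomorphism there; it must instead arise as a global deck transformation, and establishing this requires analyzing the groupoid of maps $\tau$ with $\varphi\circ\tau=\varphi$ together with the normal form of $S_0$ (compare Propositions~\ref{prop:tau} and~\ref{prop:description-A} in the paper for the analogous step in the non-polynomial discussion). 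As written, your proposal is an accurate reduction surrounded by correct context, but not a proof of the theorem.
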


The above result shows that, in order to prove Theorem \ref{thm:main-thm} in the polynomial case, it suffices to show that in some open set the lamination of $J_2$ contains only one leaf. This was also done in  \cite{dinh:lattes}. We give here however an alternative proof in the setting of commuting pairs of $\C^2$, since some of the arguments involved will be used later.

\begin{proposition} \label{prop:pol-one-leaf}
Let $f,g: \C^2 \to \C^2$ be commuting polynomial endomorphisms of degree $d$ that extend holomorphically to $\pr^2$. Suppose that the maximal integer $m$ such that $J_2 = J_2(f) = J_2(g)$ is $m$-laminated in some open set is $m=3$. Then there is an open set $\Omega$ such that $J \cap \Omega$ is an analytic hypersurface. In other words, there is only one leaf of $J_2$ intersecting $\Omega$.
\end{proposition}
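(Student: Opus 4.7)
The plan is to exploit the $S^1$-symmetry provided by Lemma \ref{lemma:circle-group} together with the subharmonicity of the pullback Green function to show that, near a common repelling fixed point, the transverse parameter set of the $3$-lamination contains at most two values. Shrinking the open set then yields the desired single real analytic hypersurface.

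By Lemma \ref{lemma:common-periodic} and the density of repelling periodic points in $J_2$, we may assume that $f$ and $g$ share a common repelling fixed point $a \in J_2$. Lemma \ref{lemma:circle-group} gives linear diagonal triangular lifts $\Lambda_f, \Lambda_g$ of the Poincar\'{e} map $\varphi : \C^2 \to \pr^2$ associated with $a$, and the identity component $\mathcal G_0$ of $\mathcal G = \overline{\langle \Lambda_f \circ \Lambda_g^{-1} \rangle}$ contains the one-parameter subgroup $\Lambda^t(z_1,z_2) = (e^{it}z_1,z_2)$. Since $f$ and $g$ are polynomial, $L_\infty \subset \mathcal E$, so $\varphi(\C^2) \subset \pr^2 \setminus \mathcal E \subset \C^2$, and the pullback $G^* = G \circ \varphi$ of the common Green function is a well-defined continuous non-negative plurisubharmonic function on $\C^2$. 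The identities $G^* \circ \Lambda_f = d \, G^* = G^* \circ \Lambda_g$ yield the $\mathcal G$-invariance of $G^*$, and in particular the rotational symmetry $G^*(z_1,z_2) = H(|z_1|^2, z_2)$ for a continuous function $H$.

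By the $\mathcal G$-invariance of $\mu^* = \varphi^*\mu$ together with the maximality of $m = 3$, Lemma \ref{lemma:eremenko} applied to the infinitesimal generator of $\Lambda^t$ forces each leaf of the lamination of $J^* = \varphi^{-1}(J_2)$ near $0$ to be $\Lambda^t$-invariant, and hence of the form $S_c = \{(z_1,z_2) : |z_1| = r_c(z_2)\}$ for some real analytic function $r_c$ defined on an open subset of $\C_{z_2}$. Since $G \geq 0$ vanishes on $K_f \supset J_2$, we have $G^* \equiv 0$ on every leaf. For each fixed $z_2$, $z_1 \mapsto G^*(z_1,z_2)$ is a radial subharmonic function of $z_1$, hence non-negative and convex in $\log|z_1|$; if it vanishes at two distinct positive heights $r_{c_1}(z_2) < r_{c_2}(z_2)$, it must vanish identically on the entire closed interval $[r_{c_1}(z_2), r_{c_2}(z_2)]$.

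Arguing by contradiction, suppose the local transverse parameter set $K$ contains three values $c_1,c_2,c_3$. The functions $r_{c_i}$ are pairwise distinct real analytic functions, hence take three distinct values for $z_2$ in a dense open subset $V_{z_2}$; we may order them so that $r_{c_1}(z_2) < r_{c_2}(z_2) < r_{c_3}(z_2)$ on $V_{z_2}$. The preceding convexity statement then forces $G^*$ to vanish on the open set $W = \{(z_1,z_2) : z_2 \in V_{z_2},\, r_{c_1}(z_2) < |z_1| < r_{c_3}(z_2)\}$, whence $T^* = \ddc G^* = 0$ on $W$ and consequently $\mu^* = T^* \wedge T^* = 0$ on $W$. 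This contradicts the fact that the middle leaf $S_{c_2}$ meets $W$ in a $3$-dimensional piece that must lie in $\supp \mu^* = J^*$. Therefore $|K| \leq 2$ locally, and restricting to a smaller open set meeting only one of the (at most two) disjoint leaves and pushing forward via $\varphi$, which is a local biholomorphism at $0$, yields the desired open set $\Omega \subset \pr^2$ in which $J \cap \Omega$ is a single real analytic hypersurface. The main technical point is the non-emptiness of $V_{z_2}$, which follows from the disjointness of the leaves $S_{c_i}$ and the real analyticity of the $r_{c_i}$.
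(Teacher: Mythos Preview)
There is a genuine gap. Your claim that $\Lambda^t$-invariance forces the leaves to have the form $\{|z_1| = r_c(z_2)\}$ is false without further input: a $\Lambda^t$-invariant real hypersurface is locally given by $F(|z_1|^2, \re z_2, \im z_2) = 0$, and can be solved for $|z_1|$ only where $\partial F/\partial(|z_1|^2) \neq 0$. If the leaves are Levi-flat --- for instance $S_c = \{\im z_2 = c\}$ --- this derivative vanishes identically, and the slice $\{z_2 = z_2^0\}$ is either entirely contained in a leaf or disjoint from it, so your radial convexity argument has nothing to grip. The paper disposes of this possibility at the outset by observing that a Levi-flat leaf would contain a complex disc through the repelling periodic point $a$, contradicting Proposition~\ref{prop:complex-curves-J} (which uses the polynomial hypothesis in an essential way). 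You need this step.

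Once the Levi-flat case is excluded, your convexity idea does work, though it still requires more care than you give it. Strict pseudoconvexity at the chosen periodic point is exactly the condition $\partial h_c/\partial(|z_1|^2) \neq 0$ (writing $S_c = \{\im z_2 = h_c(|z_1|^2,\re z_2)\}$), and this is what lets you invert to get $|z_1| = r_c(z_2)$ locally; one then checks that for $z_2^0$ with $\im z_2^0$ slightly above all three values $h_{c_i}(0,\re z_2^0)$, all three leaves meet the slice at distinct positive radii, so the domains of the $r_{c_i}$ do overlap. The paper takes a different route after this point: it first establishes the exact normal form $S_c = \{\im z_2 = |z_1|^2 + c\}$ (Lemma~\ref{lemma:disjoint-spheres}), using the dilation $\Lambda$ in addition to $\Lambda^t$, and then applies the maximum principle on explicit cylinders $\{\im z_2 = t\} \cap S_0$ to show directly that $K = \{0\}$. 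Your argument, once repaired, is more flexible in that it avoids the full normal form, but it yields only $|K| \leq 2$ rather than $|K| = 1$ --- still enough for the proposition.
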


\begin{proof}
Let $\Omega$ be an open set where $J_2$ is $3$-laminated. Notice that, after shrinking $\Omega$ if necessary, we may assume that every leaf in $\Omega$ is a strictly pseudoconvex hypersurface. Indeed, if there is one leaf in $\Omega$ that is stricly pseudoconvex at some point, then the nearby leaves are strictly pseudoconvex by continuity. If this is not the case then all the leaves in $\Omega$ are Levi-flat hypersurfaces, which are foliated by complex curves. In particular $J_2$ would contain a complex curve passing through a repelling periodic point, which is impossible (Proposition \ref{prop:complex-curves-J}).

Fix a common repelling periodic point $a \in \Omega \cap J_2$. We may replace $f$ and $g$ by suitable iterates and assume that $a$ is fixed. Let $\varphi:\C^2 \to \pr^2$ be the associated Poincaré map and  $\Lambda_f$ and $\Lambda_g$ the corresponding triangular lifts. Similar arguments as in the proof of Lemma \ref{lemma:circle-group} show that $\Lambda_f$ and $\Lambda_g$ preserve the leaves of $J^*$.

\begin{lemma} \label{lemma:disjoint-spheres}
Up to a holomorphic change of coordinates that does not change the diagonal form of $\Lambda_f, \Lambda_g$, the leaves are given by $S_c = \{\im z_2 = |z_1|^2 + c \}$, where $c$ is the leaf passing through $(0,ic)$. Furthermore $\lambda_2 = |\lambda_1|^2$ and  $\gamma_2 = |\gamma_1|^2$.
\end{lemma}

\begin{proof}
Keeping the notation of Lemma \ref{lemma:circle-group} we let $S_0$ be the leaf through $0$, fixing coordinates such that $H = T_0 S_0 = \{\im z_2 = 0\}$ and such that $\Lambda_f$ and $\Lambda_g$ are diagonal. Choose $t$ such that $\Lambda := \Lambda_f \circ \Lambda^t = \bigl( \begin{smallmatrix} \kappa & 0 \\ 0 & \lambda_2 \end{smallmatrix} \bigr) $ with $\kappa,\lambda_2 \in \R$.

We will first show that $S_0$ has the desired form. Locally, we can write $S_0$ as a graph $\im z_2 = h(\re z_1, \im z_1, \re z_2)$, where $h$ is a real analytic function vanishing to order two at $0$. Since $S_0$ is strictly pseudoconvex at $0$, the Taylor expansion of $h$ contains a nontrivial degree two homogeneous polynomial $P_2(z_1, \overline{z}_1)$ as one of its terms. The invariance of $S_0$ by $\Lambda$ gives $h \circ \Lambda = \lambda_2 \im z_2 =  \lambda_2 h$. Comparing each term of the Taylor expansion of $h$ we can see that $\kappa^2 = \lambda_2$ and that $P_2(z_1, \overline{z}_1)$ is the only nonzero term. Therefore $S_0$ is given by $\im z_2 = P_2( z_1,  \overline z_1) = \alpha z_1^2 + \beta |z_1|^2 + \delta \, \overline z _1 ^2$. The invariance of $S$ by $\Lambda^t$ implies that $\alpha = \delta = 0$ and the fact that $S_0$ is strictly pseudoconvex guarantees that $\beta \neq 0$. Therefore, after the change of coordinates $z_1 \mapsto z_1 / \sqrt \beta$ the leaf $S_0$ is given by $\im z_2 = |z_1|^2$.

In order to clarify the notation let us denote $\lambda := \lambda_2$. From the above paragraph $\kappa = \sqrt \lambda$, so $\Lambda = \bigl( \begin{smallmatrix} \sqrt \lambda & 0 \\ 0 & \lambda \end{smallmatrix} \bigr) $.

Since $J^*$ is $3$-laminated there exists a neighborhood of $0$ such that the $S_c$ form a disjoint union of graphs given by equations of the form $\im z_2 = h_c(\re z_1, \im z_1, \re z_2)$, where $c$ varies on some closed subset $K \subset \R$. Here, the functions $h_c$ are real analytic and depend analytically on $c$. In particular the coefficients of $h_c$ are locally bounded uniformly in $c$. The value $c$ is determined by the intersection of $S_c$ with the real line $\{z_1 = 0, \re z_2 = 0\}$, that is, $c = h_c(0)$. From the above discussion we know that $h_0 ( \re z_1, \im z_1, \re z_2 ) = |z_1|^2$.

Since $\Lambda$ takes $(0,ic)$ to $(0,i\lambda c)$,  it maps $S_c$ to $S_{\lambda c}$. This implies that
\begin{equation} \label{eq:h-invariance}
h_{\lambda c} (\re z_1, \im z_1, \re z_2) = \lambda h_c(\lambda^{-1/2} \re z_1,\lambda^{-1/2} \im z_1, \lambda^{-1} \re z_2).
\end{equation}

Consider the Taylor expansion $$h_c(\re z_1, \im z_1, \re z_2) = c+ P_{2,c}(\re z_1, \im z_1) + P_{1,c}(\re z_1, \im z_1) \re z_2 + \alpha_c (\re z_2)^2 + \text{h.o.t},$$ where $P_{j,c}$ are homogeneous of degree $j$. Equation (\ref{eq:h-invariance}) implies that $P_{1,c/ \lambda} = \sqrt \lambda P_{1,c}$.  Iterating we get $P_{1,c/ \lambda^n} =\lambda^{n/2} P_{1,c}$ for every $n \geq 0$. This shows that $P_{1,c} = 0$ for every $c$, otherwise the coefficients of  $P_{1,c}$ would go to infinity when $c$ approaches $0$, which is absurd since the $h_c$ depend analytically on $c$. A similar reasoning shows that all other higher order terms must vanish. Therefore, the only term left is $P_{2,c}$. Since $S_c$ is invariant by $\Lambda ^t$, we have $P_{2,c}(z_1) = \gamma(c) |z_1|^2$ for some continuous function $\gamma$. Using  equation (\ref{eq:h-invariance}) once again we get $\gamma(c) = \gamma(c / \lambda^{n})$ for every $c$. Making $n \to \infty$ we see that  $\gamma$ is constant and equal to $\gamma(0) = 1$. This proves that the $S_c$ have the desired form.
\end{proof}

We are now in position to finish the proof. Observe that $J^* \cap \{z_2 = 0\} = \{0\}$. Indeed, if there exists $(\xi,0) \in J^* \cap \{z_2 = 0\}$ with $\xi \neq 0$, the invariance of $J ^*$ by $\Lambda^t$ implies that $J^* \cap \{z_2 = 0\}$ contains the circle $\{|z_1|= |\xi| \}  \times \{0\}$. In particular the non-negative subharmonic function $G^*(z_1,0)$ vanishes for $|z_1| = |\xi|$ and by the maximum principle it vanishes for $|z_1| \leq |\xi|$. The invariance of $G^*$ by $\Lambda_f$ implies that $G^*(z_1,0) \equiv  0$ and hence $G$ vanishes over the entire curve $\varphi(\{z_2 = 0\})$, yielding a contradiction, since the set $\{G = 0\}$ is compact.

We  now claim that $S = S_0$ is the only leaf of $J^*$. Suppose that $J^*$ contains a leaf $S_c$ with $c>0$ and let $w = (0, ic) \in S_c$.  Consider the cylinders  $C_t = \{ \im z_2 = t \} \cap S_0 = \{(\xi, x + it): |\xi|^2 = t , x \in \R\}$ and their inner sides $D_t = \{(\xi, x+ it): |\xi|^2 < t, x \in \R \}$. The $C_t$ are non-empty for $t \in (c-\varepsilon, c+ \varepsilon)$ if $\varepsilon > 0$ is small. In particular $G^*$ vanishes over $C_t$ and by the maximum principle it must vanish on $D_t$. We conclude that $G^*$ vanishes over $W := \bigcup_{t \in (c-\varepsilon, c+ \varepsilon)} D_t$, which is an open neighborhood of $w$. This contradicts the fact that $J^* \subset \del \{G^* = 0\}$.

Suppose now that $J^*$ contains a leaf $S_c$ with $c<0$. Consider $C'_t = \{ \im z_2 = t \} \cap S_c = \{(\xi, x + it): |\xi|^2 + c = t , x \in \R\}$ and $D'_t = \{(\xi, x+ it): |\xi|^2 + c < t, x \in \R \}$ for $t \in (-\varepsilon, \varepsilon)$. Using the fact that $G^*=  0 $ on $S_c$ and the maximum principle we see that $G^*$ vanishes on the neighborhood of the origin $W' := \bigcup_{t \in (-\varepsilon, \varepsilon)} D'_t$. This is impossible since $0 \in J^* \subset \del \{G^* = 0\}$.
\end{proof}

\begin{theorem} \label{thm:commuting-lattes}
Let $f$ and $g$ be two commuting polynomial endomorphisms of $\C^2$ of degree $d_f, d_g \geq 2$ that extend holomorphically to $\pr^2$. Assume that $f^n \neq g^m$ for all $ n,m \geq 1$ and that $d_f^k = d_g^\ell$ for some $ k,\ell \geq 1$. 
Then, in suitable coordinates, $f$ and $g$ are homogeneous and the induced maps on the hyperplane at infinity are Lattès maps of $\pr^1$.
\end{theorem}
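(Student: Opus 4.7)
I assemble the machinery built up in Sections 2--6. First, I reduce to the equal-degree case by replacing $f, g$ with the iterates $f^k, g^\ell$ of common degree $d := d_f^k = d_g^\ell$; commutation, polynomial extension to $\pr^2$, and condition (\ref{eq:disjoint-iterates}) are preserved, and the reduction can be reversed: writing $f = F_d + F_{d-1} + \cdots$ in homogeneous components, the homogeneity of $f^k$ combined with the fact that the leading term $F_d$ has no common zero off the origin (because $f$ extends to $\pr^2$) forces the lower-order components to vanish inductively, and similarly for $g$.

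Under the equal-degree assumption, Proposition \ref{prop:lamination} yields a maximal lamination dimension $m \in \{3, 4\}$ for $J_2 = J_2(f) = J_2(g)$ and $\mu$. I rule out $m = 4$ directly from the polynomial hypothesis: a $4$-lamination forces $J_2$ to have nonempty interior, density of repelling periodic points of $f$ in $J_2$ produces a repelling periodic point in this interior, and any small complex disc through it sits inside $J_2$, contradicting Proposition \ref{prop:complex-curves-J}. Hence $m = 3$, and Proposition \ref{prop:pol-one-leaf} furnishes an open $\Omega$ with $J_2 \cap \Omega$ a real analytic hypersurface---precisely the hypothesis of Dinh's Theorem \ref{thm:dinh-lattes}.

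Applying Dinh's theorem to $f$ at a common repelling periodic point $a \in J_2 \cap \Omega$ of $f$ and $g$ (existence via Lemma \ref{lemma:common-periodic}) produces coordinates on $\C^2$ in which $f$ is homogeneous of degree $d$ and $f|_{L_\infty}$ is a Latt\`es map of $\pr^1$. By Proposition \ref{prop:poincare2}, the same Poincar\'e map $\varphi$ at $a$ semi-conjugates $g$ to a commuting triangular lift $\Lambda_g$; since Dinh's normalization of $\varphi$ (to the form $\varphi(z',z_k) = \nu_1(z') \exp(2\pi i z_k)$) depends only on $\varphi$ and not on the particular endomorphism, the same coordinates make $g$ homogeneous of degree $d$ as well. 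The Latt\`es property of $g|_{L_\infty}$ then follows from Corollary \ref{cor:commuting-lattes} applied to the commuting pair $f|_{L_\infty}, g|_{L_\infty}$ on $\pr^1$. The main technical point is this coordinate-compatibility step; I expect it to be handled by a careful inspection of Dinh's construction, whose homogenization procedure rests solely on the algebra of $f \circ \varphi = \varphi \circ \Lambda_f$ and applies verbatim to $g \circ \varphi = \varphi \circ \Lambda_g$.
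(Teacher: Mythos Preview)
Your approach is essentially the same as the paper's, and it is correct. Two small remarks are worth making.

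First, your reversal step (deducing that $f$ is homogeneous from the homogeneity of $f^k$) is unnecessary. The paper observes that $J_2(f) = J_2(f^k)$, so once Proposition \ref{prop:pol-one-leaf} (applied to the equal-degree pair $f^k, g^\ell$) shows that $J_2 \cap \Omega$ is a real analytic hypersurface, Dinh's Theorem \ref{thm:dinh-lattes} applies directly to the original $f$, yielding coordinates in which $f$ itself is homogeneous. Your inductive argument on homogeneous components is valid, but you can skip it.

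Second, the step you flag as the ``main technical point''---that the same coordinates make $g$ homogeneous---is handled in the paper not by re-running Dinh's argument but by a direct geometric observation: in Dinh's normal form $\varphi(z_1,z_2)=\nu_1(z_1)\exp(2\pi i z_2)$, the vertical lines $\{z_1=\text{const}\}$ are mapped exactly onto the complex lines through the origin in $\C^2$; since $\Lambda_g$ is diagonal (Lemma \ref{lemma:circle-group}) it permutes vertical lines, so $g$ permutes lines through the origin and is therefore homogeneous. This makes precise what you anticipate. Note also that the paper inserts Proposition \ref{prop:same-E} to identify $\mathcal E_g = \{0\} \cup L_\infty$, which you do not explicitly mention; while the Poincar\'e-map argument already gives homogeneity of $g$, this is a useful consistency check.

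Your explicit exclusion of $m=4$ via Proposition \ref{prop:complex-curves-J} is a point the paper leaves implicit; it is a welcome clarification.
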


\begin{proof}
Notice that $f^k$ and $g^\ell$ are commuting endomorphisms of the same degree. By Proposition \ref{prop:pol-one-leaf} and from the fact that the Julia set of $f$ and $g$ are the same as those of $f^k$ and $g^\ell$ we see that there is a nonempty subset $\Omega$ of $\pr^2$  such that $J_2 \cap \Omega$ is a real analytic hypersurface. From Theorem \ref{thm:dinh-lattes} there is a coordinate system on $\C^2$ such that $f$ is homogeneous and the restriction of $f$ to the line at infinty $L_\infty$ is a Lattès map. Notice that $\mathcal E_f = \{0\} \cup L_{\infty}$, so $\mathcal E_g = \{0\} \cup L_{\infty}$ from Proposition \ref{prop:same-E}.

From the form of the Poincaré map given by Theorem \ref{thm:dinh-lattes} we see that the lines through the origin are given by the images of the parallel lines $\{z_1 = constant \}$ by $\varphi$. Since $\Lambda_g$ preserves this family we see that $g$ preserves the family of lines through the origin, so $g$ is also homogenous in these coordinates. As $g|_{L_\infty}$ commutes with $f|_{L_\infty}$ we get from Corollary \ref{cor:commuting-lattes} that $g|_{L_\infty}$ is a Lattès map.
\end{proof}


\subsection{Ruling out the non-polynomial case}
We consider now a commuting pair of holomorphic endomorphisms $f,g: \pr^2 \to \pr^2$ having same degree $d_f = d_g = d \geq 2$ and satisfying (\ref{eq:disjoint-iterates}).

As above, let $m$ be the maximal integer such that $\mu$ is laminated in some open subset of $\pr^2$. We will prove that if $m=3$ then both $f$ and $g$ are polynomial maps after some number of iterations. We thus fall into the case treated in the last section.

\begin{theorem} \label{thm:3-laminated-polynomial}
Let $f,g: \pr^2 \to \pr^2$ be commuting endomorphisms of degree $d$ such that $f^n \neq g^n$ for all $ n \geq 1$ and let $\mu$ be their common Green measure. Suppose that there is an open set $\Omega$ of $\pr^2$ such that $\mu$ is $3$-laminated in $\Omega$ and that $\mu$ is nowhere $4$-laminated (i.e.\ smooth). Then $f$ and $g$ are polynomial after some number of iterations.
\end{theorem}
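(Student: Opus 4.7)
The target is to show that $\mathcal E_f$ contains a hypersurface: by Theorem~\ref{thm:exceptional-set} it is then a union of at most three lines, each totally invariant under a further iterate of $f$, so choosing coordinates with one such line at infinity makes $f$ a polynomial endomorphism of $\C^2$ extending to $\pr^2$, and Proposition~\ref{prop:same-E} transfers the conclusion to $g$. After replacing $f, g$ by iterates, fix a common repelling fixed point $a \in J_2 \cap \Omega$ (using Lemma~\ref{lemma:common-periodic} together with density of repelling periodic points in $J_2$) and work with the common Poincar\'e map $\varphi: \C^2 \to \pr^2$, the commuting triangular lifts $\Lambda_f, \Lambda_g$, and $J^* = \varphi^{-1}(J_2)$. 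The proof of Lemma~\ref{lemma:circle-group} uses only the invariance of $\mu^*$ and the structure of $\Lambda_f, \Lambda_g$, so polynomiality plays no role and its conclusion transfers: $\Lambda_f, \Lambda_g$ are linear and diagonal, each leaf $S_c$ of the $3$-lamination of $\mu^*$ near $0$ is invariant under both, and in suitable linear coordinates the closed group $\mathcal G$ generated by $\Lambda_f \circ \Lambda_g^{-1}$ contains the circle $\Lambda^t = \mathrm{diag}(e^{it},1)$. Since $m = 3$ but $\mu$ is nowhere $4$-laminated, each $S_c$ is a real-analytic hypersurface, and after shrinking $\Omega$ we may assume that either every leaf is strictly pseudoconvex at its generic point or every leaf is Levi-flat.

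\textbf{Strictly pseudoconvex case.} The purely local calculation of Lemma~\ref{lemma:disjoint-spheres} transfers to the present setting to give the normal form $S_c = \{\im z_2 = |z_1|^2 + c\}$ in suitable coordinates, with $\lambda_2 = |\lambda_1|^2$ and $\gamma_2 = |\gamma_1|^2$. I would then follow the strategy of Proposition~\ref{prop:pol-one-leaf} and Theorem~\ref{thm:dinh-lattes} (both proved in the polynomial setting in \cite{dinh:lattes}) with the global Green function $G$ of $\C^2$ replaced by a local plurisubharmonic potential $u^*$ of $T^* = \varphi^* T$ satisfying $u^* \circ \Lambda_f = d\, u^*$ modulo pluriharmonic terms. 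On the pseudoconvex caps bounded by two distinct leaves $S_{c_1} \neq S_{c_2}$, the maximum principle combined with $\Lambda_f$-equivariance should force $u^*$ to be pluriharmonic on an open set meeting $J^*$, contradicting $T^* \neq 0$; thus only the leaf $S_0$ occurs locally. The $\Lambda_f$-equivariance of $\varphi$ together with the dilating form of $\Lambda_f$ on the $z_2$-axis then yields an exponential normal form $\varphi(z_1,z_2) = [\nu_1(z_1) : \nu_2(z_1) : e^{-2\pi i z_2}]$, from which the periodicity of $\varphi$ in $z_2$ exhibits the line $\{z_3 = 0\}$ as contained in $\pr^2 \setminus \varphi(\C^2) = \mathcal E_f$, producing the desired invariant line. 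This is the main obstacle of the whole theorem: Dinh's polynomial argument hinges on the compactness of $\{G = 0\}$, which has no direct counterpart for the current $T^*$, so the substitute via local potentials and $\Lambda_f$-equivariance requires careful control.

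\textbf{Levi-flat case.} Each leaf $S_c$ is locally the zero set of the imaginary part of a holomorphic function and is foliated by its complex Levi leaves, which extend to a holomorphic foliation of a neighborhood. The invariance of the $S_c$ under $\Lambda_f, \Lambda_g$ and under the circle $\Lambda^t$ allows the local Levi foliations to be assembled into a single holomorphic foliation $\mathcal F^*$ on a neighborhood of $J^*$; pushing $\mathcal F^*$ forward through $\varphi$ gives an $f$- and $g$-invariant nonsingular holomorphic foliation $\mathcal F$ on a neighborhood of $J_2 \setminus A$, where $A = \pr^2 \setminus \varphi(\C^2) = \mathcal E_f$. The dilating action of $\Lambda_f$ on the Levi leaf through $0$ (via Zalcman's Lemma~\ref{lemma:zalcman} if necessary) extends it to a non-constant entire curve in $J^*$ tangent to $\mathcal F^*$, and its image by $\varphi$ is then an entire curve in $J_2$ contained in a leaf of $\mathcal F$. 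Lemma~\ref{lemma:no-hol-fol} rules this out unless $A$ fails to be pluripolar, in which case $\mathcal E_f$ already contains a hypersurface, as desired. This case is conceptually cleaner because the final contradiction is already packaged in the no-entire-curve statement.
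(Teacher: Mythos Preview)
Your setup and the Levi-flat case are essentially right in spirit; the paper in fact goes further and shows the Levi-flat case is outright impossible (it first proves $J^*=\C^2$, hence $J_2=\pr^2$, and then Lemma~\ref{lemma:no-hol-fol} gives a contradiction for any pluripolar $A$), but your contrapositive reading would also suffice for the goal $\dim\mathcal E=1$.

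The real gap is in the strictly pseudoconvex case. Your plan is to mimic Proposition~\ref{prop:pol-one-leaf} with a local potential $u^*$ of $T^*$ in place of $G^*$, but the polynomial argument uses two global facts that have no local substitute: that $G^*\geq 0$ with $\{G^*=0\}=\varphi^{-1}(K)$ for a compact $K$, and that $G^*$ vanishes identically on $J^*$. A generic potential of $T^*$ satisfies neither; the relation $u^*\circ\Lambda_f=d\,u^*+\text{(pluriharmonic)}$ gives no control on the sign of $u^*$ along $S_c$, so the maximum-principle step on the caps between two leaves does not go through. You flag this yourself as ``the main obstacle'' but leave it unresolved, and the subsequent exponential normal form for $\varphi$ is likewise unjustified.

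The paper avoids this obstacle entirely and does \emph{not} attempt to reduce to a single leaf. Instead it analyzes the deck groupoid $\mathcal A=\{\tau:\varphi\circ\tau=\varphi\}$ and shows (Propositions~\ref{prop:tau} and~\ref{prop:description-A}) that every $\tau\in\mathcal A$ is a global affine map generated by rotations $r_\rho$, Heisenberg maps $\tau_u$, and imaginary $z_2$-translations $t_{ic}$. One then takes $v(z_1,z_2)=|z_1|^2-\im z_2$: it is invariant under $r_\rho$ and $\tau_u$, and shifts by a constant under $t_{ic}$. If $K=\{0\}$ there are no translations, so $v$ itself descends to a p.s.h.\ function on $\pr^2\setminus\mathcal E$; if $\dim\mathcal E=0$ it extends to $\pr^2$, which is absurd. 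If $K$ has at least two points, one descends instead the current $S^*=\ddc v$ to a closed positive $(1,1)$-current $S$ on $\pr^2\setminus\mathcal E$, extends it across a finite $\mathcal E$, checks $f^*S=d\,S$ (this forces $\lambda_2=d$), writes $T=S-\ddc u$, and uses Lemma~\ref{lemma:subharmonic-upper-half-plane} on the axis $\{z_1=0\}$ together with an Ahlfors current supported on $\overline{\varphi(\{z_1=0\})}$ to reach a contradiction with the continuity of the potentials of $T$. The key idea you are missing is this descent of $\mathcal A$-invariant objects through $\varphi$, which replaces the unavailable Green function.
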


Recall from Theorem \ref{thm:exceptional-set} that if the exceptional set $\mathcal E$ of a holomorphic endomorphism $f$ of $\pr^2$ is positive-dimensional then the one-dimensional part of $\mathcal E$ is a union of $1$, $2$ or $3$ lines. In this case $f^3$ possess  a totally invariant line, so it must be a polynomial endomorphism. Therefore, in order to prove Theorem \ref{thm:3-laminated-polynomial}, we need to show that under the above hypothesis, the common exceptional set $\mathcal E$ of $f$ and $g$ is positive-dimensional.

The proof will be divided in two parts. In the first part we will show that the leaves coming from the lamination of $\mu^*$ cannot be Levi-flat. This leaves us to the case where the leaves are strictly pseudoconvex at a generic point, which will be  considered in section  \ref{subsection:pseudonvex}.

\begin{remark} \label{rmk:dichotomy}
The leaves of the lamination of $\mu$ and $J$ are either all Levi-flat or all strictly pseudoconvex at a generic point. Indeed, suppose that there is one Levi-flat leaf $\mathcal L$ in $\Omega$ and take a point $a \in \mathcal L \setminus \mathcal E$. If $b$ any point in $J$, the equidistribution of pre-images (Theorem \ref{thm:equidistribution-preimages}) says that there is a point $b'$ in a small neighborhood of $b$ that is mapped to $a$ by some iterate $f^{n_0}$. Since $(f^{n_0})^* \mu = d^{2n_0} \mu$, the map  $f^{n_0}$ must preserve the leaves, so it maps a piece of the leaf $\mathcal L '$ through $b'$ to $\mathcal L$. Therefore $\mathcal L'$ is Levi-flat in a neighborhood of $b'$. Recall that $\mathcal L'$ is real analytic. Since $b'$ can be taken arbitrarily close to $b$ we have, by continuity, that the leaf through $b$ is Levi-flat.
\end{remark}

\subsubsection{Lamination by Levi-flat hypersurfaces} 

The aim of this section is to show that the Green measure cannot be laminated by Levi-flat hypersurfaces.

Let $f,g,\mu$ and $\Omega$ be as in Theorem \ref{thm:3-laminated-polynomial}. Pick a common repelling periodic point $a \in \Omega \cap J$ and consider the corresponding Poincaré map $\varphi: \C^2 \to \pr^2$.

\begin{lemma} \label{lemma:levi-flat-julia}
Suppose that $\mu$ is laminated by Levi-flat hypersurfaces. Then the leaves of the lamination of $\mu^* = \varphi^* \mu$ are given by $S_c = \{\im z_2 = c\}$ where $c$ varies on a closed set $K \subset \R$.
\end{lemma}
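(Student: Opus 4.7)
My plan is to proceed in three steps.

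First, I would transfer the setup of Lemma \ref{lemma:circle-group} to the present non-polynomial situation: its proof uses only the $3$-laminarity hypothesis, Corollary \ref{cor:abs-values}, and the invariance of $\mu^*$ under $\Lambda_f, \Lambda_g$, none of which require polynomiality. Thus $\Lambda_f$ and $\Lambda_g$ are linear and diagonal; after a linear change of coordinates that preserves diagonality, we may assume $\Lambda^t = \text{diag}(e^{it}, 1)$ is an $S^1$-subgroup of $\mathcal G$ and $T_0 S_0 = \{\im z_2 = 0\}$, which forces $\lambda_2, \gamma_2 \in \R$. Replacing $f, g$ by iterates if necessary, we may assume $\lambda_2 = \gamma_2 =: \lambda > 1$. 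I parametrize the leaves by their intersection with the transversal $\ell = \{z_1 = 0\}$, which is pointwise fixed by $\Lambda^t$: for $c$ in a closed set $K \subset \R$ with $0 \in K$, let $S_c$ be the leaf of $J^*$ through $(0, ic)$. Since $\Lambda^t$ fixes $(0, ic)$ and preserves the lamination, it preserves each $S_c$.

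Second, I would identify the Levi foliation of each $S_c$ using the $S^1$-invariance. At the fixed point $(0, ic)$, the tangent $T_{(0, ic)} S_c$ is a three-real-dimensional $S^1$-invariant subspace of $\C^2$, which by an elementary isotypic decomposition must have the form $\C \cdot \partial/\partial z_1 \oplus \R \cdot w_c$ for some $w_c \in \C \partial/\partial z_2$; in particular $T^{1,0}_{(0, ic)} S_c = \C \cdot \partial/\partial z_1$. The Levi leaf of $S_c$ through $(0, ic)$ is a germ of holomorphic curve with this complex tangent, and since $\Lambda^t$ fixes the point and preserves the Levi foliation, this leaf is $S^1$-invariant. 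The ring of $S^1$-invariant holomorphic germs at $(0, ic)$ is $\C\{z_2 - ic\}$, so the only $S^1$-invariant germs of holomorphic curves through that point are level sets of $z_2$, and the Levi leaf through $(0, ic)$ is $\{z_2 = ic\}$. The same reasoning applied to nearby Levi leaves (which are $\Lambda^t$-invariant since $\Lambda^t$ acts trivially on the transversal to the Levi foliation at the fixed point) shows that locally $S_c = \C \times \gamma_c$ for a real-analytic arc $\gamma_c \subset \C$ through $ic$.

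Finally, I would use $\Lambda_f$-invariance of the lamination to pin down $\gamma_c$. The map $\Lambda_f = \text{diag}(\lambda_1, \lambda)$ sends $\C \times \gamma_c$ to $\C \times (\lambda \gamma_c)$, which is the leaf through $(0, i\lambda c)$, so $\lambda \gamma_c = \gamma_{\lambda c}$. Writing $\gamma_c$ as a graph $\{x + i y_c(x)\}$ with $y_c(0) = c$ (valid for $c$ near $0$ by continuity of the tangent direction), this becomes $y_{\lambda c}(\lambda x) = \lambda y_c(x)$, and expanding $y_c(x) = c + \sum_{n \geq 1} b_n(c) x^n$ yields $b_n(\lambda c) = \lambda^{1-n} b_n(c)$. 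Taking $c = 0$ and combining with $y_0(0) = y_0'(0) = 0$ (the derivative condition coming from $T_0 S_0 = \{\im z_2 = 0\}$) and $\lambda > 1$ forces $y_0 \equiv 0$ by a termwise Taylor argument, so $\gamma_0 = \R$. For general $c$, the coefficients $b_n(c)$ depend continuously on $c \in K$, as follows from the real-analytic nature of the lamination in the spirit of the paragraph following equation (\ref{eq:h-invariance}) of Lemma \ref{lemma:disjoint-spheres}; iterating the relation toward the origin gives $b_n(c/\lambda^k) = \lambda^{k(n-1)} b_n(c)$, and letting $k \to \infty$ together with $b_n(0) = 0$ forces $b_n \equiv 0$ for every $n \geq 1$. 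Hence $y_c(x) \equiv c$ and $\gamma_c = \{\im z_2 = c\}$. The step requiring the most care is justifying the joint regularity of the family $(y_c)_{c \in K}$ when $K$ may a priori be a very irregular closed subset of $\R$; this is the main technical obstacle.
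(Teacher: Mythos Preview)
Your argument is correct and arrives at the same conclusion as the paper, but the middle step is genuinely different. The paper writes the central leaf as a graph $\im z_2 = h(\re z_1,\im z_1,\re z_2)$, uses the rotation $\Lambda^t$ to reduce $h$ to a function $\psi(|z_1|^2,\re z_2)$, and then uses the \emph{real} dilation $\Lambda = \text{diag}(\lambda,\lambda_2)$ (obtained by composing $\Lambda_f$ with a suitable $\Lambda^t$) to force $\psi = \alpha|z_1|^{2\ell}$ via a termwise Taylor comparison; Levi-flatness then kills $\alpha$. You instead exploit the Levi foliation directly: the $S^1$-invariance of each Levi leaf (a holomorphic germ through an $S^1$-fixed point) forces it to be a horizontal slice $\{z_2 = w\}$, so that $S_c = \C\times\gamma_c$ and the problem becomes one-dimensional on the arc $\gamma_c$. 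This is more conceptual and avoids introducing the auxiliary real $\Lambda$; the paper's route is more elementary in that it never names the Levi foliation. Both approaches converge on the same final functional-equation argument for the family $(S_c)_{c\in K}$, and both lean on the same analytic regularity of the lamination that you flag as the delicate point---the paper handles it exactly as you indicate, by appealing to the analytic dependence established in Lemma~\ref{lemma:disjoint-spheres}.

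One phrasing to sharpen: your claim that nearby Levi leaves are $\Lambda^t$-invariant because ``$\Lambda^t$ acts trivially on the transversal'' is better argued as follows. Since the Levi leaf through $(0,ic)$ is $\{z_2=ic\}$, nearby Levi leaves of $S_c$ meet the axis $\{z_1=0\}$ transversally at some point $(0,w)$; as $\Lambda^t$ fixes $(0,w)$ and permutes Levi leaves, the leaf through $(0,w)$ is sent to itself. This makes the invariance immediate without invoking a linearized action on leaf space.
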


\begin{proof}
Let us first show that the central leaf is $S_0 = \{\im z_2 = 0\}$. We can choose coordinates as in Lemma \ref{lemma:circle-group} such that $H = T_0 S_0 = \{\im z_2 = 0\}$, $\Lambda_f (z_1,z_2) = (\lambda_1 z_1, \lambda_2 z_2)$ with $\lambda_2 \in \R$ and $\Lambda^t = (e^{it}z_1,z_2)$. We can show as in the proof of Lemma \ref{lemma:circle-group} that $S_0$ is invariant by $\Lambda_f$ and $\Lambda_g$. In particular, $S_0$ is invariant by $\Lambda =\bigl( \begin{smallmatrix} \lambda & 0 \\  0 & \lambda_2 \end{smallmatrix} \bigr)$ for some $\lambda \in \R.$

We can write $S_0$ as a graph of the form $\im z_2 = h(\re z_1, \im z_1, \re z_2)$ for some real analytic function $h$ vanishing to order $2$ at $0$. The invariance of $S_0$ by $\Lambda^t$ implies that $h \circ \Lambda^t = h$. Comparing the terms of the Taylor expansion of both sides we see that $h$ cannot contain any monomial of the form $z_1^n \bar z_1^m (\re z_2)^k$ with $n \neq m$. We can write then $h(\re z_1, \im z_1, \re z_2) = \psi(|z_1|^2, \re z_2)$ for some real analytic function $\psi$ that vanishes at the origin. The invariance of $S_0$ by $\Lambda$ implies that $\psi \circ \Lambda = \lambda_2 \psi$. Using this relation and comparing each term of the Taylor expansion of $\psi$ it is possible to show that $\psi$ has the form $\psi = \alpha |z_1|^{2\ell}$ for some $\alpha \in \R$, if there is an integer $\ell$ satisfying $\lambda^{2\ell} = \lambda_2$ or $\psi \equiv 0$ if no such $\ell$ exists. Since $S_0$ is Levi-flat  the first case is only possible if $\alpha = 0$, so $\psi \equiv 0$ also in this case. This shows that $S_0$ has de desired form. 

We can follow similar arguments as in the end of the proof of Lemma \ref{lemma:disjoint-spheres}. In the present case we get that  $S_c = \{\im z_2 = c\}$.
\end{proof}

\begin{proposition} \label{prop:J-pseudoconvex}
Let $f,g$, $\mu$ and $\Omega$ as in Theorem \ref{thm:3-laminated-polynomial}. Then the leaves of the lamination of $\mu$ in $\Omega$ are strictly pseudoconvex at a generic point.
\end{proposition}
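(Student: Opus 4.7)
The plan is to argue by contradiction and reduce to Lemma \ref{lemma:no-hol-fol}. By Remark \ref{rmk:dichotomy}, the leaves of the lamination of $\mu$ in $\Omega$ are either all Levi-flat or all strictly pseudoconvex at a generic point, so it suffices to exclude the Levi-flat case. Assuming the leaves are Levi-flat, I use Lemma \ref{lemma:common-periodic} and the density of repelling periodic points in $J$ to fix, after replacing $f$ and $g$ by iterates, a common repelling fixed point $a \in \Omega \cap J$ whose leaf is smooth. Let $\varphi \colon \C^2 \to \pr^2$ be the associated Poincar\'e map, $\Lambda_f,\Lambda_g$ the corresponding triangular lifts, $J^* = \varphi^{-1}(J)$, and $\mu^* = \varphi^* \mu$. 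By Lemma \ref{lemma:levi-flat-julia} (whose proof goes through in the present non-polynomial setting), I can arrange coordinates on $\C^2$ in which $\Lambda_f$ and $\Lambda_g$ are diagonal with real eigenvalues in the second coordinate, and the leaves of the lamination of $\mu^*$ near $0$ take the form
\[
S_c = \{\im z_2 = c\}, \qquad c \in K,
\]
for some closed set $K \subset \R$ accumulating at $0$.

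The central observation is that the Levi-foliation of each $S_c$ is the trace on $S_c$ of the holomorphic foliation $\mathcal F^*$ of $\C^2$ by complex lines $\{z_2 = \text{const}\}$, which is preserved by the diagonal maps $\Lambda_f,\Lambda_g$ and by the circle group $\Lambda^t(z_1,z_2) = (e^{it}z_1,z_2)$. I will show that $\mathcal F^*$ descends through $\varphi$ to a nonsingular, $f$- and $g$-invariant holomorphic foliation $\mathcal F$ defined on a neighborhood of $J \setminus A$, where $A \subset \pr^2$ is the pluripolar set consisting of $\mathcal E$ together with the image of the critical set of $\varphi$. The main obstacle is the well-definedness of $\mathcal F$; following the groupoid construction in the proof of Lemma \ref{lemma:S0totreal}, this reduces to showing that every local biholomorphism $\tau$ of $\C^2$ satisfying $\varphi \circ \tau = \varphi$ on an open set meeting $J^*$ preserves $\mathcal F^*$. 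Such a $\tau$ preserves $\mu^*$, so by Lemma \ref{lemma:eremenko} and the maximality of $m=3$ it must permute the leaves $S_c$; being a biholomorphism, its restriction to each $S_c$ is a CR map to some $S_{c'}$ and therefore sends Levi leaves to Levi leaves. Writing $\tau = (\tau_1,\tau_2)$, this says that $\tau_2$ is independent of $z_1$ on every complex line $\{z_2 = c_0\}$ with $\im c_0 \in K$; since $K$ has an accumulation point, the identity principle applied to the holomorphic function $\del \tau_2 / \del z_1$ yields $\del \tau_2/\del z_1 \equiv 0$ on $\C^2$, hence $\tau$ preserves $\mathcal F^*$.

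With $\mathcal F$ in hand, $f$- and $g$-invariance are inherited from $\Lambda_f,\Lambda_g$, and outside $A$ the Julia set $J$ is locally a union of leaves of $\mathcal F$, because $J^*$ is locally the union of the complex lines $\{z_2 = c_0\}$ with $\im c_0 \in K$. To conclude, I fix any $c_0 \in \C$ with $\im c_0 \in K$ and consider
\[
\phi \colon \C \to \pr^2, \qquad \phi(\zeta) = \varphi(\zeta,c_0).
\]
This is a nonconstant entire curve (because $D\varphi(0)$ is invertible) whose image lies in $J$ and inside a single leaf of $\mathcal F$, directly contradicting Lemma \ref{lemma:no-hol-fol}. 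This contradiction rules out the Levi-flat case and completes the plan.
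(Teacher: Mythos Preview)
Your argument is correct and lands on the same contradiction as the paper—Lemma \ref{lemma:no-hol-fol} applied to the foliation $\{z_2 = \text{const}\}$ pushed down through $\varphi$—but the route you take to establish that the groupoid $\mathcal A$ preserves $\mathcal F^*$ is genuinely different and worth noting.

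The paper first proves the stronger intermediate fact $K=\R$ (equivalently $J^*=\C^2$ and $J_2=\pr^2$): since the Levi-flat leaf $S_0$ contains a holomorphic disc through a repelling periodic point, Proposition \ref{prop:J1=J2} gives $J_1=J_2$, and then any $c_0\notin K$ would produce an entire curve $\varphi(\cdot,ic_0)$ in the Fatou set, contradicting Kobayashi hyperbolicity. Once $K=\R$, every local deck transformation preserves the full real foliation $\{\im z_2=c\}$, hence its Levi foliation $\mathcal F^*$. Your approach bypasses this step entirely: you observe directly that a $\tau\in\mathcal A$ permutes the $S_c$ (maximality of $m$), that a biholomorphism between Levi-flat hypersurfaces sends Levi leaves to Levi leaves, and then the identity principle forces $\partial\tau_2/\partial z_1\equiv 0$. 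This is more economical—it avoids invoking Proposition \ref{prop:J1=J2} and the hyperbolicity of Fatou components—while the paper's detour yields the extra information $J_2=\pr^2$, which is not actually needed for the contradiction.

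Two small imprecisions, neither of which affects validity. First, you assert that $K$ accumulates at $0$, but this is not justified (and in fact your identity-principle step does not need it: $\partial\tau_2/\partial z_1$ is holomorphic and vanishes on a single real hypersurface $S_c\cap W$, which already forces it to vanish on all of $W$ since the zero locus of a nonzero holomorphic function has real codimension at least $2$). Second, you write $\partial\tau_2/\partial z_1\equiv 0$ ``on $\C^2$'', but $\tau$ is only defined on the open set $W$; of course, vanishing on $W$ is all you need.
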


\begin{proof}
Suppose by contradiction that there is a Levi flat leaf $\mathcal L \subset J \cap \Omega$ and take a repelling periodic point $a \in \mathcal L$. Consider the associated Poincaré map $\varphi: \C^2 \to \pr^2$. From Remark \ref{rmk:dichotomy} all the leaves are Levi-flat, so from Lemma \ref{lemma:levi-flat-julia} we have $J^* = \varphi^{-1}(J_2) = \cup_{c \in K} \{\im z_2 = c\}$.

{\bf Claim:} We have $J^* = \C^2$ and $J_2 = \pr^2$.

Since $\mathcal L$ is foliated by complex curves, Proposition \ref{prop:J1=J2} implies that $J_1 = J_2$. If there is a $c_0 \in \R \setminus K$ then $\varphi(\{z_2 = i c_0\})$ is an entire curve contained in $\pr^2 \setminus (\mathcal E \cup J_1)$, which is in turn contained in the Fatou set of $f$. This is absurd because the Fatou set is hyperbolic \cite{dinh-sibony:cime}. We must have then $K = \R$ and $J^* = \C^2$. Since $J_2$ is closed we have $J_2 = \overline{\varphi(\C^2)} = \pr^2$.
 
Considering the groupoid $\mathcal A$ of local biholomorphic maps acting on the fibers of $\varphi$ (as in the proof of Lemma \ref{lemma:S0totreal}) we see that its elements preserve the real foliation   $\{\im z_2 = cst.\}$, so they must also preserve the holomorphic foliation $\{z_2 = cst.\}$. If we let $A$ be complement of the image the of regular values of $\varphi$ we get then an $f$-invariant non-singular foliation $\mathcal F$ on $\pr^2 \setminus A$. This is impossible by Lemma \ref{lemma:no-hol-fol} (since $J_2 = \pr^2$ in our case). This gives the contradiction we were looking for.
\end{proof}

\subsubsection{Lamination by strictly pseudoconvex hypersurfaces} \label{subsection:pseudonvex}
In this section we suppose that the Green measure is $3$-laminated by leaves that are strictly pseudoconvex at a generic point.

Observe that  on the proof Lemma \ref{lemma:disjoint-spheres} have not used the fact that the maps are polynomial. We only needed the strict pseudoconvexity of the central leaf at $0$. By assumption we can choose a repelling periodic point $a \in \Omega \cap J$ at which the leaf that passes through it is strictly pseudoconvex. From this fact and Lemmas \ref{lemma:circle-group} and \ref{lemma:disjoint-spheres} we can assume, after a holomorphic change of coordinates and taking iterates,  that:
\begin{itemize}
\item The closed Lie group generated by $\Lambda_f \circ \Lambda_g^{-1}$ contains $\Lambda^t = \bigl( \begin{smallmatrix} e^{it} & 0 \\ 0 & 1 \end{smallmatrix} \bigr)$ as a $1$-parameter subgroup.
\item The maps $\Lambda_f$ and $\Lambda_g$ are linear and diagonal.
\item The eigenvalues of $\Lambda_f$ and $\Lambda_g$ satisfy $|\lambda_1|^2 = |\gamma_1|^2 = \lambda_2 = \gamma_2$.
\item The leaves of the lamination of $\mu^*$ are given by $S_c = \{\im z_2 = |z_1|^2 + c \}$, where the parameter $c$ varies on a closed set $K \subset \R$.
\end{itemize}

As in the previous section, we set $\lambda = |\lambda_1|^2$ and $\Lambda = \bigl( \begin{smallmatrix} \sqrt \lambda & 0 \\ 0 & \lambda \end{smallmatrix} \bigr) $. Since $\Lambda = \Lambda_f \circ \Lambda^\theta$  for some $\theta \in \R$ we have the invariance relations $\Lambda^* T^* = d\cdot T^*$, $\Lambda^* \mu^* = d^2 \cdot \mu^*$ and $\Lambda(J^*) = J^* = \Lambda^{-1}(J^*)$.

The following result is classical. It follows for instance from \cite{alexander:ball-polydisc} and the classification of automorphisms of the unit ball of $\C^2$, which can be found in \cite{rudin:unit-ball}.

\begin{proposition} \label{prop:tau}
Let $W$ be an open subset of $\C^2$ intersecting $J^*$ and $\tau:W \to \tau(W)$ a holomorphic map preserving the family $S_c = \{\im z_2 = |z_1|^2 + c \}$, $c \in K$.	

\begin{enumerate}
\item If $0 \in W$ and $ \tau(0) = 0$ then $\tau$ is linear of the form $\tau = \bigl( \begin{smallmatrix} \sqrt \delta e^{i \rho} & 0 \\ 0 & \delta \end{smallmatrix} \bigr) $ for some $\delta >0$ and some $\rho \in \R$.
\item If $0 \in W$ and $\tau(0) = (0,ic)$ then $\tau$ is affine of the form $\tau = t_{ic} \circ A $ where $A = \bigl( \begin{smallmatrix} \sqrt \delta e^{i \rho} & 0 \\ 0 & \delta \end{smallmatrix} \bigr)$ for some $\delta >0$ and some $\rho \in \R$ and $t_a: (z_1,z_2) \mapsto  (z_1,z_2 + a) $.
\item If $0 \in W$ then $\tau = \tau_u \circ t_{ic} \circ A$, where $A$ and $t_{ic}$ are as above and $\tau_u$ is the Heisenberg map:
\begin{equation}
\tau_u (z_1, z_2) = (z_1 + u_1,z_2 + 2i z_1 \overline{u}_1 + u_2),
\end{equation}
with $u = (u_1,u_2) \in S_0$.
\item In general, if $\tau$ takes a point of $S_c$ to a point of $S_{c'}$ we have $\tau = \tau_{u'} \circ t_{ic'} \circ A \circ t_{-ic} \circ \tau_u^{-1}$, where $A$, $t_{ic}$ $t_{ic'}$, $\tau_u$, $\tau_{u'}$ are as above with $u,u' \in S_0$.
\end{enumerate}

In particular the map $\tau:W \to \tau(W)$ extends to a globally defined affine map of $\C^2$.
\end{proposition}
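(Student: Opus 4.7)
The plan is to establish part (1) directly by combining Alexander's extension theorem with the classification of automorphisms of $\mathbb{B}^2$, and then to deduce parts (2), (3), (4) from (1) by composing $\tau$ with explicit maps that already belong to the family preservers.

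For part (1), since $\tau$ fixes $0 \in S_0$ and maps leaves of the family to leaves, the image $\tau(S_0 \cap W)$ is contained in a leaf passing through $0$, which must be $S_0$ itself; thus $\tau|_{S_0 \cap W}$ is a local CR diffeomorphism of the Heisenberg hypersurface $S_0 = \{\im z_2 = |z_1|^2\}$. Identifying $S_0$ with the boundary sphere of $\mathbb{B}^2$ via the Cayley transform, Alexander's theorem \cite{alexander:ball-polydisc} extends $\tau$ to a global biholomorphism of the Siegel half-space $\{\im z_2 > |z_1|^2\}$ fixing the boundary point $0$. A direct computation shows that Heisenberg translations $\tau_u$ with $u \in S_0$ preserve each $S_c$ individually, vertical translations $t_{ic}$ send $S_{c'}$ to $S_{c+c'}$, and rotation-dilations $(z_1,z_2) \mapsto (\sqrt{\delta}\, e^{i\rho} z_1, \delta z_2)$ send $S_c$ to $S_{\delta c}$. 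Using the explicit description of $\mathrm{Aut}(\mathbb{B}^2)$ in \cite{rudin:unit-ball}, I would verify that these three types generate the whole subgroup preserving the foliation $\{S_c\}$, and that the stabilizer of $0$ inside this subgroup consists precisely of the rotation-dilations, yielding the form claimed in (1).

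Parts (2)--(4) then reduce to (1) by composition. For (2), the map $t_{-ic} \circ \tau$ fixes $0$, so (1) gives $\tau = t_{ic} \circ A$. For (3), I would write $\tau(0) = (a_1, a_2)$, observe that $\tau(0)$ lies in some $S_c$ with $c = \im a_2 - |a_1|^2$, and set $u := (a_1, a_2 - ic)$; a quick check gives $u \in S_0$, and $\tau_u^{-1} \circ t_{-ic} \circ \tau$ fixes $0$, hence equals some $A$ by (1). Combined with the commutation $t_{ic} \circ \tau_u = \tau_u \circ t_{ic}$, this yields $\tau = \tau_u \circ t_{ic} \circ A$. For (4), pick $p \in W \cap J^* \cap S_c$ and set $q := \tau(p) \in S_{c'}$; choose $u, u' \in S_0$ with $\tau_u(0, ic) = p$ and $\tau_{u'}(0, ic') = q$ (both solvable by the same calculation as in (3)), so that $(\tau_{u'} \circ t_{ic'})^{-1} \circ \tau \circ (\tau_u \circ t_{ic})$ fixes $0$ and preserves the family, hence is an $A$ as in (1). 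This produces the decomposition asserted in (4).

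Since $A$, the maps $t_{ic}$, $t_{ic'}$, and the Heisenberg maps $\tau_u$, $\tau_{u'}$ are all complex-affine maps of $\C^2$, the resulting $\tau$ extends to a globally defined affine map. The main obstacle is the step in part (1) showing that foliation preservation genuinely forces the Alexander extension to lie in the small subgroup generated by the three families of maps listed above; one must rule out parabolic automorphisms and Cayley-type inversions that a priori could also fix $0$. I would handle this by tracking how the transversal coordinate $\im z_2 - |z_1|^2$ behaves under a general element of $\mathrm{Aut}(\mathbb{B}^2)$ transported back to the Siegel model, checking that only the three listed types keep this function a function of itself.
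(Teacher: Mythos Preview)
Your proposal is correct and matches the paper's approach exactly: the paper does not give a proof but simply says the result ``follows for instance from \cite{alexander:ball-polydisc} and the classification of automorphisms of the unit ball of $\C^2$, which can be found in \cite{rudin:unit-ball}.'' Your outline supplies precisely the details behind that sentence---Alexander's extension theorem to pass from a local CR map on $S_0$ to a global automorphism of the Siegel domain, followed by the identification of which ball automorphisms preserve the horosphere foliation $\{S_c\}$---and your reductions of (2)--(4) to (1) via composition with $t_{ic}$ and $\tau_u$ are the natural bookkeeping steps.
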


Consider now the groupoid of local biholomorphisms acting on the fibers of $\varphi$. More precisely, set
\begin{equation*}
\mathcal A = \{\tau: \tau \text{ is a local biholomorphism of } \C^2 \text{ such that } \varphi\circ \tau = \varphi \}.
\end{equation*}

\begin{proposition}  \label{prop:description-A}
The set $\mathcal A$ is a discrete subgroup of the group $\text{\normalfont Aff}(\C^2)$ of affine transformations of $\C^2$. Furthermore $\mathcal A$ is contained in the group generated by the following types of maps:
\begin{itemize}
\item [i)] purely imaginary translations along the $z_2$-axis: $t_{ic} (z_1,z_2) = (z_1,z_2 + ic) $,
\item [ii)] rotations around the $z_2$-axis: $r_\rho(z_1,z_2) = (e^{i\rho} z_1,z_2)$ and
\item [iii)] Heisenberg maps: $\tau_u (z_1, z_2) = (z_1 + u_1,z_2 + 2i z_1 \overline{u}_1 + u_2)$ with $u = (u_1,u_2) \in S_0$.
\end{itemize}
\end{proposition}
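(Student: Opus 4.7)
The plan has three parts: (i) exhibit each $\tau \in \mathcal{A}$ as a globally defined affine map, (ii) show that $\mathcal{A}$ is discrete, and (iii) verify that the ``scaling'' parameter appearing in the decomposition of Proposition \ref{prop:tau}(4) is trivial, which lands $\tau$ inside the group generated by $t_{ic}$, $r_\rho$, and $\tau_u$.

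For (i), note that any $\tau \in \mathcal{A}$ satisfies $\tau^* \mu^* = \tau^* \varphi^* \mu = \varphi^* \mu = \mu^*$, so $\tau$ preserves $\mu^*$ and, as in the proof of Lemma \ref{lemma:S0totreal}, sends leaves of the lamination of $J^*$ to leaves, i.e.\ takes the family $\{S_c\}_{c \in K}$ into itself. Proposition \ref{prop:tau}(4) then gives a globally defined affine extension
\[
\tau \;=\; \tau_{u'} \circ t_{ic'} \circ A \circ t_{-ic} \circ \tau_u^{-1}, \qquad A = \begin{pmatrix} \sqrt{\delta}\, e^{i\rho} & 0 \\ 0 & \delta \end{pmatrix},
\]
with $u,u' \in S_0$, $c,c' \in \R$, $\delta > 0$, $\rho \in \R$. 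The defining relation $\varphi \circ \tau = \varphi$ is preserved by composition and inversion, and once it holds on an open set it holds on all of $\C^2$ by analytic continuation, so $\mathcal{A}$ is a subgroup of $\text{\normalfont Aff}(\C^2)$.

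For (ii), I would view $\mathcal{A}$ inside the Lie group $\text{\normalfont Aff}(\C^2)$ and rule out nontrivial $1$-parameter subgroups. Such a subgroup $\{\tau_t\}$ would produce an affine vector field $\xi$ with $D\varphi(z)\cdot \xi(z) \equiv 0$, by differentiating $\varphi \circ \tau_t = \varphi$ at $t = 0$. Since $D\varphi$ is invertible in a neighborhood of $0$, $\xi$ vanishes there, and being an affine vector field it vanishes identically. This forces $\tau_t = \text{\normalfont id}$ for all $t$, so the identity component of $\mathcal{A}$ is trivial and $\mathcal{A}$ is discrete.

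For (iii), I claim $\delta = 1$ in the decomposition above. A direct computation of linear parts (using that the linear part of $\tau_w$ is $\bigl(\begin{smallmatrix} 1 & 0 \\ 2i\bar w_1 & 1 \end{smallmatrix}\bigr)$ and that $t_{\pm ic}$ are translations) shows that the linear part $M$ of $\tau$ is lower triangular with diagonal entries $\sqrt{\delta}\, e^{i\rho}$ and $\delta$, which are therefore its eigenvalues. Suppose $\delta \neq 1$. Then neither eigenvalue of $M$ equals $1$, so $M - I$ is invertible and $\tau$ admits a unique fixed point $p_0 \in \C^2$. Replacing $\tau$ with $\tau^{-1} \in \mathcal{A}$ if necessary, we may assume $0 < \delta < 1$; then both eigenvalues have modulus strictly less than $1$, so $\tau$ is a contraction and $\tau^n(z) \to p_0$ for every $z \in \C^2$. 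But $\varphi \circ \tau^n = \varphi$, so passing to the limit yields $\varphi(z) = \varphi(p_0)$ for every $z$, contradicting the non-constancy of $\varphi$. Hence $\delta = 1$, $A = r_\rho$, and $\tau = \tau_{u'} \circ t_{ic'} \circ r_\rho \circ t_{-ic} \circ \tau_u^{-1}$ belongs to the group generated by the three families $t_{ic}$, $r_\rho$, $\tau_u$. The main technical obstacle is the identification of the eigenvalues of the linear part of $\tau$, but this is entirely routine once Proposition \ref{prop:tau}(4) is in hand; the rest of the argument is formal and uses only the local biholomorphicity of $\varphi$ at $0$.
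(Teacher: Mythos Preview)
Your proof is correct and follows the same approach as the paper: Proposition \ref{prop:tau} makes each $\tau$ globally affine, discreteness comes from the local injectivity of $\varphi$, and $\delta \neq 1$ is ruled out by the attracting-fixed-point contradiction. The only cosmetic difference is that the paper deduces discreteness in one line from the discreteness of the fibers of $\varphi$ (a sequence $\tau_n \to \mathrm{id}$ in $\mathcal A$ would put infinitely many points of a single fiber in a small neighborhood), whereas your $1$-parameter subgroup argument implicitly needs to pass to the closure $\overline{\mathcal A}$ to extract such a subgroup --- a minor imprecision, but the conclusion is the same.
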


\begin{proof}
Let $\tau \in \mathcal A$. Notice that $\tau$ must preserve $\mu^*$ so, by similar arguments as in the proof of Lemma \ref{lemma:circle-group},  it must preserve the leaf structure of $\mu^*$, i.e.\ , the family  $S_c = \{\im z_2 = |z_1|^2 + c \}$, $c \in K$. From Proposition \ref{prop:tau}, $\tau $ is a globally defined invertible affine map. This shows that $\mathcal A$ is subgroup of $\text{Aff}(\C^2)$. The discreteness of $\mathcal A$ follows from the fact that $\varphi$ has discrete fibers.

Set $S_c = \tau(S_0)$. Then Proposition \ref{prop:tau} says that $\tau = \tau_u \circ t_{ic} \circ A$ with $ A = \bigl( \begin{smallmatrix} \sqrt \delta e^{i \rho} & 0 \\ 0 & \delta \end{smallmatrix} \bigr)$. To conclude the proof we must show that $\delta = 1$.

Suppose that $\delta \neq 1$. Replacing $\tau$ by $\tau^{-1}$ if necessary we may assume that $\delta < 1$. In this case the map $\tau$ has a fixed point $z \in \C^2$ which is attracting. In particular, there is an orbit $\tau^{n}(w)$ converging to $z$. This is absurd because the fibers of $\varphi$ are discrete.
\end{proof}

We are now in position to give the proof of Theorem \ref{thm:3-laminated-polynomial}.

\begin{proof}[Proof of Theorem \ref{thm:3-laminated-polynomial}] From the remarks made after the statement of Theorem \ref{thm:3-laminated-polynomial} the conclusion will follow if we show that $\dim \mathcal E = 1$.

By Proposition \ref{prop:J-pseudoconvex}, we can pick a repelling periodic point at which the leaf passing through it is strictly pseudoconvex. From the considerations made at the beginning of this section we have that the leaves of the lamination of $\mu^*$ are given by $S_c = \{\im z_2 = |z_1|^2 + c \}$, where the parameter $c$ varies on a closed set $K \subset \R$. There are two cases to consider. 

\textbf{Case I:} The lifted Julia set $J^*$ consists of only one leaf, that is, $K = \{0\}$.

In this case the group $\mathcal A$ from Proposition \ref{prop:description-A} is generated only by rotations $r_\rho$ and Heisenberg maps $\tau_u$. Therefore the p.s.h.\ function $v(z_1,z_2) =  |z_1|^2 - \im z_2$ on $\C^2$ is invariant by $\mathcal A$, so it descends to a p.s.h.\ function $u$ on $\pr^2 \setminus \mathcal E$. If  $\dim \mathcal E = 0$ then $u$ extends to  a  p.s.h.\ function on $\pr^2$, which is absurd because $\pr^2$ is compact. This contradiction shows that $\dim \mathcal E = 1$.

\textbf{Case II:} The lifted Julia set $J^*$ contains more than one leaf, that is, $K$ contains at least $2$ points.

Consider the p.s.h.\ function $v(z_1,z_2) =  |z_1|^2 - \im z_2$ on $\C^2$. It satisfies $t_{ic}^* v = v + c$, $\tau_u^* v = v$ and $r_\rho^* v = v$. These relations show that the positive closed $(1,1)$-current $S^* = \ddc v$ is invariant under $t_{ic}$, $r_\rho$ and $\tau_u$. Proposition \ref{prop:description-A} implies then that $S^*$ is invariant by the whole group $\mathcal A$, so it descends to a current $S$ in $\pr^2 \setminus \mathcal E$. More precisely, there is a positive closed $(1,1)$-current $S$ on $\pr^2 \setminus \mathcal E$ such that $\varphi^* S = S^*$.

Suppose that $\dim \mathcal E = 0$. Then $S$ extends to a positive closed $(1,1)$-current on $\pr^2$, which we still denote by $S$.

\textbf{Claim.} The eigenvalues of $\Lambda_f$ and $\Lambda_g$ satisfy $|\lambda_1|^2 = |\gamma_1|^2 = \lambda_2 = \gamma_2 = d$. The current $S$ satisfies $f^*S = d \cdot S$.

Firstly we observe that $f^*S = \lambda \cdot S$ on $\pr^2 \setminus \mathcal E$. Indeed, we have the relation $\Lambda_f^* S^* = \lambda \cdot S^*$, which gives $$\varphi^* f^*S = \Lambda_f^* \varphi^*S = \Lambda_f^* S^* = \lambda \cdot S^* = \varphi^*(\lambda \cdot S),$$ so $f^*S = \lambda S$ on $\pr^2 \setminus \mathcal E$. Now, the $(1,1)$-current $f^*S - \lambda S$ is supported by $\mathcal E$, so it must be identically zero. Therefore $f^*S = \lambda S$ on $\pr^2$. On one hand we have $\|f^*S\| = \|\lambda S\| = \lambda \|S\|$. On the other hand, since $f^*$ acts by multiplication by $d$ on $H^{1,1}(\pr^2)$ we have $\|f^*S\| = d \|S\|$. Hence $\lambda = d$, proving the claim (recall that $\lambda = |\lambda_1|^2$).

\vskip4pt

After multiplying $S$ by a constant we may assume that $S$ is of mass $1$. Write $T = S - \ddc  u$, where $T$ is the Green current of $f$ and $u$ is a p.s.h.\ function modulo $T$ defined up to an additive constant. We may normalize it so that $\int  u \; d \mu = 0$. Since $T$ is of bounded local potential, the function $u$ is locally bounded from above. The invariance of $S$ and $T$ implies that $u \circ f^n = d^n \cdot u$, so we must have $u \leq 0$. The chosen normalization implies that $u= 0$ on $J_2$.

Consider the function $u^* = u \circ \varphi$. It vanishes on $J^*$  and satisfies  $T^* = S^* - \ddc  u^*$. Notice that $ S^* = \ddc(|z_1|^2 - \im z_2) =  \ddc |z_1|^2 = \frac{i}{\pi} dz_1 \wedge d\bar z_1$ up to a constant multiple. In particular $S^*$ restricts to zero on $L_0= \{z_1 = 0\}$. We have then that $T^*|_{L_0} = - \ddc u^*_0$, where $u_0^*(z_2) = u^*(0,z_2)$. The function $u_0$ is then a continuous negative superharmonic function on $L_0$ satisfying $u_0^* (d z_2) = d \cdot u_0^* (z_2)$ and vanishing on the lines $\{\im z_2 = c\},  c \in K$. Lemma \ref{lemma:subharmonic-upper-half-plane} below and the fact that $K$ contains more then one point imply that $u_0^* \equiv 0$. Therefore $u^*$ vanishes over $L_0$ and $u$ vanishes over $\varphi(L_0)$. Since $u \leq 0$ and is upper semi-continuous it vanishes on $\overline{\varphi(L_0)}$.

The restriction of the Poincaré map to $L_0$ gives an entire curve $\varphi|_{L_0}: L_0 \simeq \C \to \pr^2$. The Ahlfors construction (Theorem \ref{thm:ahlfors-construction}) produces  a positive closed $(1,1)$-current $R$ on $\pr^2$ that is supported in $\overline{\varphi(L_0)}$. Consider the current $R ^* = \varphi^*R$. It is a limit of currents $R_n$ supported in $\varphi^{-1}(\varphi(\Delta_n)) = \cup_{\tau \in \mathcal A} \, \tau(\Delta_n)$, where $\Delta_n$ is an increasing sequence of discs contained in $L_0$. By Proposition \ref{prop:description-A} this set is a discrete union of discs contained in vertical lines $\{z_1 = c_\tau\}$, $\tau \in \mathcal A$. In particular $i  dz_1 \wedge d\bar z_1 \wedge R_n= 0$ for every $n$, so $i  dz_1 \wedge d\bar z_1 \wedge R= 0$. Therefore $$\varphi^* (S \wedge R) = C^{st} \cdot i  dz_1 \wedge d\bar z_1 \wedge R^* = 0,$$
so the measure $S \wedge R$ is supported by $\mathcal E$.

On the other hand we have seen that $u$ vanishes on $\overline{\varphi (L_0)}$. In particular, $u$ vanishes on the support of $R$. This implies that $T \wedge R = S \wedge R$. The above discussion  shows that $T \wedge R $ is supported by $\mathcal E$. This gives a contradiction, because, since $T$ has  Hölder continuous local potentials, the measure $T \wedge R $ cannot charge points (see \cite{sibony:dynamique-Pk}). 
\end{proof}

The following result can be found in \cite{dinh-sibony:endo-permutables}, Lemme 5.3.
\begin{lemma} \label{lemma:subharmonic-upper-half-plane}
Let $v \geq 0$ be a continuous subharmonic function defined on the closed upper half-plane $\overline{\mathbb H}$. Suppose that $v$ vanishes on $\{\im z = 0\}$ and that there is a $d > 1$ such that $v(dz) = d\cdot v(z)$ for every $z \in \overline{\mathbb H}$. Then there is a constant $\beta \geq 0$ such that $v(z) = \beta \im z$.
\end{lemma}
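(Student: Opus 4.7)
The plan is to exploit the Riesz decomposition of $v$ on $\mathbb H$ together with the self-similarity $v(dz)=d\,v(z)$, thereby reducing the problem to showing that the associated Riesz measure must vanish.

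\textbf{Step 1 (linear growth).} I would first extend $v$ to $\tilde v:\C\to[0,\infty)$ by setting $\tilde v\equiv 0$ on $\{\im z<0\}$. The continuity of $v$ together with $v|_{\R}=0$ make the sub-mean-value inequality at points of $\R$ immediate, so $\tilde v$ is subharmonic on $\C$. By the maximum principle on discs $\{|z|\le R\}$ the quantity $M(R):=\max_{|z|\le R}\tilde v(z)$ is attained on the upper semicircle, and the functional equation gives $M(dR)=d\,M(R)$. Iterating yields $\tilde v(z)\le C(1+|z|)$ for some $C>0$.

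\textbf{Step 2 (Riesz decomposition).} Since $v$ is non-negative, subharmonic on $\mathbb H$, continuous up to $\R$ with vanishing boundary values, and of linear growth, the classical Herglotz--Riesz representation on the upper half-plane produces $\beta\ge 0$ and a non-negative Borel measure $\mu$ on $\mathbb H$ with
\[
v(z)=\beta\,\im z - q(z), \qquad q(z):=\int_{\mathbb H} G_{\mathbb H}(z,\zeta)\,d\mu(\zeta),
\]
where $G_{\mathbb H}(z,\zeta)=\log\bigl|(z-\bar\zeta)/(z-\zeta)\bigr|\ge 0$ is the Green function of $\mathbb H$. The constraint $v\ge 0$ forces $0\le q(z)\le\beta\,\im z$ on $\mathbb H$.

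\textbf{Step 3 (scaling of $\mu$).} Since $\im(dz)=d\,\im z$, the relation $v(dz)=d\,v(z)$ translates to $q(dz)=d\,q(z)$. A change of variables $\zeta=dw$ in the defining integral shows $\mu(dA)=d\,\mu(A)$ for every Borel $A\subset\mathbb H$; in particular the annuli $A_n:=\{d^n\le|\zeta|<d^{n+1}\}\cap\mathbb H$ satisfy $\mu(A_n)=d^n\mu(A_0)$.

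\textbf{Step 4 (the measure vanishes).} Set $z_0=i$; by Step~2, $q(i)\le\beta<\infty$. A short computation yields
\[
G_{\mathbb H}(i,\zeta) \ \ge\ \frac{\im\zeta}{4|\zeta|^{2}} \qquad (|\zeta|\ge 1).
\]
Introduce $L(A):=\int_A \im\zeta/|\zeta|^{2}\,d\mu(\zeta)$; the scaling of $\mu$ gives $L(dA)=L(A)$, so $L(A_n)=L(A_0)$ for every $n\ge 0$. Hence
\[
+\infty\ >\ 4\,q(i)\ \ge\ L\bigl(\{|\zeta|\ge 1\}\cap\mathbb H\bigr)\ =\ \sum_{n\ge 0}L(A_n)\ =\ \sum_{n\ge 0}L(A_0),
\]
which forces $L(A_0)=0$. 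Because $\im\zeta>0$ on $\mathbb H\cap A_0$, this gives $\mu(A_0)=0$, and the scaling then yields $\mu\equiv 0$. Therefore $q\equiv 0$ and $v(z)=\beta\,\im z$ with $\beta\ge 0$.

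The technical heart of the argument is Step~2: producing the clean decomposition with a Green potential $q$ and harmonic majorant exactly $\beta\,\im z$, with no boundary Poisson integral term. This rests on the Herglotz representation of non-negative harmonic functions on $\mathbb H$, together with the observation that the continuous vanishing of $v$ on $\R$ kills any boundary measure appearing in that representation, so that only the $\beta\,\im z$ term survives. Once the representation is in place, the scaling argument of Step~4 is essentially elementary.
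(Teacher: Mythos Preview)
The paper does not give its own proof of this lemma; it simply cites \cite{dinh-sibony:endo-permutables}, Lemme~5.3. So there is nothing to compare your argument against, and your proposal stands on its own merits. The overall architecture---extend by zero, get linear growth, write $v=\beta\,\im z - q$ with $q$ a Green potential, then use the scaling to kill the Riesz measure---is sound, and Steps~1, 3 and~4 are correct as written. The inequality $G_{\mathbb H}(i,\zeta)\ge \im\zeta/(4|\zeta|^2)$ for $|\zeta|\ge 1$ follows easily from $\log(1+x)\ge\min(x/2,\log 2)$ together with $\xi^2+(1-\eta)^2\le 2|\zeta|^2$, and the dilation-invariance of $L$ is exactly as you say.

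The weak point is Step~2, and your own caveat there is well placed, but the justification you sketch is not quite enough. Two things need more care. First, the existence of a harmonic majorant: linear growth $\tilde v(z)\le C(1+|z|)$ does not by itself produce a harmonic majorant on $\mathbb H$. The clean way is to solve the Dirichlet problem on half-discs $D_R^+$ with boundary data $v$, obtaining harmonic $h_R\ge v$; these increase with $R$, and a direct Poisson-kernel computation using the linear bound shows $h_R(i)$ stays bounded, so $h_R\nearrow h$ with $h$ harmonic by Harnack. One also checks $h_{dR}(dz)=d\,h_R(z)$, hence $h(dz)=d\,h(z)$. Second, the vanishing of the boundary measure: writing $h=\beta\,\im z + P[\sigma]$ by Herglotz, the fact that $v$ vanishes continuously on $\R$ (combined with the a.e.\ nontangential vanishing of Green potentials) only kills the absolutely continuous part of $\sigma$; it does not rule out a singular part. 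What does work is the scaling of $h$: it forces $\sigma(dA)=d^2\sigma(A)$, and then an argument identical in spirit to your Step~4 (finiteness of $\int (1+t^2)^{-1}\,d\sigma(t)$ versus geometric growth of $\sigma$ on dyadic blocks) gives $\sigma\equiv 0$. With these two fixes, your proof is complete.
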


\section{Proof of the Main Theorem}
We are able now to give a proof of Theorem \ref{thm:main-thm}.

\begin{proof}[Proof of Theorem \ref{thm:main-thm}] Let $f$ and $g$ be commuting holomorphic endomorphisms of $\pr^2$ such that $f^n \neq g^m$ for all $ n,m \geq 1$ and that $d_f^k = d_g^\ell$ for some $ k,\ell \geq 1$. Denote by $J_2$ the common  Julia set of $f$ and $g$ and by $\mu$ their Green measure. Notice that $f^k$ and $g^\ell$ are commuting endomorphisms of same degree. By Proposition \ref{prop:lamination} and from the fact that the Julia set and the Green measure of $f$ and $g$ are the same as those of $f^k$ and $g^\ell$ we see that $J_2$ and $\mu$ are laminated in some open subset of $\pr^2$. Furthermore, the maximal $m$ such that $J_2$ and $\mu$ are $m$-laminated in some open subset $\Omega$ of $\pr^2$ is $m=3$ or $4$.

If $m=3$, Theorem \ref{thm:3-laminated-polynomial} implies that $f^N$ and $g^M$ are polynomial for some $N,M \geq 1$, and Theorem \ref{thm:commuting-lattes} shows that they are lifts of Lattès maps of $\pr^1$. In particular, the common exceptional set of $f$ and $g$, which is the same as the exceptional set of $f^N$ and $g^M$, is a union of a point and a line. This shows that $f$ and $g$ are also polynomial, so by Theorem \ref{thm:commuting-lattes} again, $f$ and $g$ are  lifts of Lattès maps of $\pr^1$.

 If $m=4$, the Green measure is analytically equivalent to the Lebesgue measure in $\Omega$ so, by Theorem \ref{thm:m=4-lattes},  $f$ and $g$ are Lattès maps.
\end{proof}

\bibliography{refs-commuting-pairs}

\begin{thebibliography}{CLN00}

\bibitem[Ale74]{alexander:ball-polydisc}
H.~Alexander.
\newblock Holomorphic mappings from the ball and polydisc.
\newblock {\em Math. Ann.}, 209:249--256, 1974.

\bibitem[BD99]{briend-duval:liapounoff}
Jean-Yves Briend and Julien Duval.
\newblock Exposants de {L}iapounoff et distribution des points p\'eriodiques
  d'un endomorphisme de {$\bold C{\rm P}^k$}.
\newblock {\em Acta Math.}, 182(2):143--157, 1999.

\bibitem[BD01]{briend-duval:mesure}
Jean-Yves Briend and Julien Duval.
\newblock Deux caract\'erisations de la mesure d'\'equilibre d'un endomorphisme
  de {${\rm P}^k(\bold C)$}.
\newblock {\em Publ. Math. Inst. Hautes \'Etudes Sci.}, (93):145--159, 2001.

\bibitem[BD05]{berteloot-dupont:lattes}
F.~Berteloot and C.~Dupont.
\newblock Une caract\'erisation des endomorphismes de {L}att\`es par leur
  mesure de {G}reen.
\newblock {\em Comment. Math. Helv.}, 80(2):433--454, 2005.

\bibitem[BL01]{berteloot-loeb:lattes}
Fran{\c{c}}ois Berteloot and Jean-Jacques Loeb.
\newblock Une caract\'erisation g\'eom\'etrique des exemples de {L}att\`es de
  {${\Bbb P}^k$}.
\newblock {\em Bull. Soc. Math. France}, 129(2):175--188, 2001.

\bibitem[CLN00]{cerveau-lins-neto:hyp-exc}
D.~Cerveau and A.~Lins~Neto.
\newblock Hypersurfaces exceptionnelles des endomorphismes de {${\bf C}{\rm
  P}(n)$}.
\newblock {\em Bol. Soc. Brasil. Mat. (N.S.)}, 31(2):155--161, 2000.

\bibitem[DG12]{dujardin-guedj:maximal}
Romain Dujardin and Vincent Guedj.
\newblock Geometric properties of maximal psh functions.
\newblock In {\em Complex {M}onge-{A}mp\`ere equations and geodesics in the
  space of {K}\"ahler metrics}, volume 2038 of {\em Lecture Notes in Math.},
  pages 33--52. Springer, Heidelberg, 2012.

\bibitem[Din01]{dinh:lattes}
Tien-Cuong Dinh.
\newblock Sur les applications de {L}att\`es de {${\Bbb P}^k$}.
\newblock {\em J. Math. Pures Appl. (9)}, 80(6):577--592, 2001.

\bibitem[DS02]{dinh-sibony:endo-permutables}
Tien-Cuong Dinh and Nessim Sibony.
\newblock Sur les endomorphismes holomorphes permutables de $\pr^k$.
\newblock {\em Mathematische Annalen}, 324:33--70, 2002.

\bibitem[DS08]{dinh-sibony:equidistribution-ENS}
Tien-Cuong Dinh and Nessim Sibony.
\newblock Equidistribution towards the {G}reen current for holomorphic maps.
\newblock {\em Ann. Sci. \'Ec. Norm. Sup\'er. (4)}, 41(2):307--336, 2008.

\bibitem[DS10]{dinh-sibony:cime}
Tien-Cuong Dinh and Nessim Sibony.
\newblock Dynamics in several complex variables: endomorphisms of projective
  spaces and polynomial-like mappings.
\newblock In {\em Holomorphic dynamical systems}, volume 1998 of {\em Lecture
  Notes in Math.}, pages 165--294. Springer, Berlin, 2010.

\bibitem[Dup03]{dupont:lattes}
Christophe Dupont.
\newblock Exemples de {L}att\`es et domaines faiblement sph\'eriques de {$\Bbb
  C^n$}.
\newblock {\em Manuscripta Math.}, 111(3):357--378, 2003.

\bibitem[Ere89]{eremenko:functional-equations}
A.~{\`E}. Eremenko.
\newblock Some functional equations connected with the iteration of rational
  functions.
\newblock {\em Algebra i Analiz}, 1(4):102--116, 1989.

\bibitem[Fat24]{fatou}
P.~Fatou.
\newblock Sur l'itération analytique et les substitutions permutables.
\newblock {\em Journal de Mathématiques Pures et Appliquées}, pages 1--50,
  1924.

\bibitem[FS94]{fornaess-sibony:dynamics1}
John~Erik Forn{\ae}ss and Nessim Sibony.
\newblock Complex dynamics in higher dimension. {I}.
\newblock {\em Ast\'erisque}, (222):5, 201--231, 1994.
\newblock Complex analytic methods in dynamical systems (Rio de Janeiro, 1992).

\bibitem[FS01]{fornaess-sibony:examples}
John~Erik Forn{\ae}ss and Nessim Sibony.
\newblock Dynamics of {${\bf P}^2$} (examples).
\newblock In {\em Laminations and foliations in dynamics, geometry and topology
  ({S}tony {B}rook, {NY}, 1998)}, volume 269 of {\em Contemp. Math.}, pages
  47--85. Amer. Math. Soc., Providence, RI, 2001.

\bibitem[Gue03]{guedj:equidistribution}
Vincent Guedj.
\newblock Equidistribution towards the {G}reen current.
\newblock {\em Bull. Soc. Math. France}, 131(3):359--372, 2003.

\bibitem[Jul22]{julia}
Gaston Julia.
\newblock Mémoire sur la permutabilité des fractions rationnelles.
\newblock {\em Annales scientifiques de l'École Normale Supérieure},
  39:131--215, 1922.

\bibitem[Mil06]{milnor:lattes}
John Milnor.
\newblock On {L}att\`es maps.
\newblock In {\em Dynamics on the {R}iemann sphere}, pages 9--43. Eur. Math.
  Soc., Z\"urich, 2006.

\bibitem[Rit23]{ritt}
J.~F. Ritt.
\newblock Permutable rational functions.
\newblock {\em Trans. Amer. Math. Soc.}, 25(3):399--448, 1923.

\bibitem[Ron10]{rong:lattes}
Feng Rong.
\newblock Latt\`es maps on {$\bold P^2$}.
\newblock {\em J. Math. Pures Appl. (9)}, 93(6):636--650, 2010.

\bibitem[Rud08]{rudin:unit-ball}
Walter Rudin.
\newblock {\em Function theory in the unit ball of {$\Bbb C^n$}}.
\newblock Classics in Mathematics. Springer-Verlag, Berlin, 2008.
\newblock Reprint of the 1980 edition.

\bibitem[Sib99]{sibony:dynamique-Pk}
Nessim Sibony.
\newblock Dynamique des applications rationnelles de {$\bold P^k$}.
\newblock In {\em Dynamique et g\'eom\'etrie complexes ({L}yon, 1997)},
  volume~8 of {\em Panor. Synth\`eses}, pages ix--x, xi--xii, 97--185. Soc.
  Math. France, Paris, 1999.

\bibitem[Taf11]{taflin:equidistribution}
Johan Taflin.
\newblock Equidistribution speed towards the {G}reen current for endomorphisms
  of {$\Bbb P^k$}.
\newblock {\em Adv. Math.}, 227(5):2059--2081, 2011.

\bibitem[TY82]{tokunaga-yoshida}
Syoshi Tokunaga and Masaaki Yoshida.
\newblock Complex crystallographic groups. {I}.
\newblock {\em J. Math. Soc. Japan}, 34(4):581--593, 1982.

\bibitem[Ves87]{veselov:integrable}
A.~P. Veselov.
\newblock Integrable mappings and {L}ie algebras.
\newblock {\em Dokl. Akad. Nauk SSSR}, 292(6):1289--1291, 1987.

\end{thebibliography}
\bibliographystyle{alpha}
\end{document}